\def\CC{\mathbb{C}}
\def\RR{\mathbb{R}}
\def\ZZ{\mathbb{Z}}
\def\NN{\mathbb{N}}
\def\DD{\mathbb{D}}
\def\BB{\mathbb{B}}
\def\TT{\mathbb{T}}
\def\CDD{\overline{\DD}}
\def\wi{\widetilde}
\def\wh{\widehat}
\def\pa{\partial}
\def\ov{\overline}
\def\CLW{\mathcal{C}^{\omega}}
\def\mb{\mathbb}
\def\mc{\mathcal}
\def\su{\subset}
\def\OO{{\mathcal O}}
\def\cC{{\mathcal C}}
\def\HH{{\mathcal H}}
\def\eps{\varepsilon}
\DeclareMathOperator{\id}{id}
\DeclareMathOperator{\wind}{wind}
\DeclareMathOperator{\re}{Re}
\DeclareMathOperator{\im}{Im}
\DeclareMathOperator{\dist}{dist}
\renewcommand{\phi}{\varphi}
\newtheorem{prop}{Proposition}[section]
\newtheorem{propp}{Proposition}[subsection]
\newtheorem{lem}[prop]{Lemma}
\newtheorem{lemm}[propp]{Lemma}
\newtheorem{tw}[prop]{Theorem}
\newtheorem{tww}[propp]{Theorem}
\newtheorem{corr}[propp]{Corollary}
\newtheoremstyle{rem}{}{}{}{}{\bf}{.}{ }{}
\theoremstyle{rem}
\newtheorem{rem}[prop]{Remark}
\newtheoremstyle{remm}{}{}{}{}{\bf}{.}{ }{}
\theoremstyle{remm}
\newtheorem{remm}[propp]{Remark}
\newtheorem{df}[prop]{Definition}
\newtheorem{dff}[propp]{Definition}
\begin{document}
\title{Lempert Theorem for strongly linearly convex domains}
\author{\L ukasz Kosi\'nski and Tomasz Warszawski}
\subjclass[2010]{32F45}
\keywords{Lempert Theorem, strongly linearly convex domains, Lempert extremals}
\address{Instytut Matematyki, Wydzia\l\ Matematyki i Informatyki, Uniwersytet Jagiello\'nski, ul. Prof. St. \L ojasiewicza 6, 30-348 Krak\'ow, Poland}
\email{lukasz.kosinski@gazeta.pl, tomasz.warszawski@im.uj.edu.pl}
\begin{abstract}
In 1984 L.~Lempert showed that the Lempert function and the Carath\'eodory distance coincide on non-planar bounded strongly linearly convex domains with real analytic boundaries. Following this paper, we present a~slightly modified and more detailed version of the proof. Moreover, the Lempert Theorem is proved for non-planar bounded $\cC^2$-smooth strongly linearly convex domains.
\end{abstract}
\maketitle

The aim of this paper is to present a detailed version of the proof of the Lempert Theorem in the case of non-planar bounded strongly linearly convex domains with smooth boundaries. The original Lempert's proof is presented only in proceedings of a conference (see \cite{Lem1}) with a very limited access and at some places it was quite sketchy. We were encouraged by some colleagues to prepare an extended version of the proof in which all doubts could be removed and some of details of the proofs could be simplified. We hope to have done it below. Certainly, \textbf{the idea of the proof belongs entirely to Lempert}. The main differences, we would like to draw attention to, are
\begin{itemize}

\item results are obtained in $\mathcal C^2$-smooth case;

\item the notion of stationary mappings and $E$-mappings is separated;

\item a geometry of domains is investigated only in neighborhoods of boundaries of stationary mappings (viewed as boundaries of analytic discs) --- this allows us to obtain localization properties for stationary mappings;

\item boundary properties of strongly convex domains are expressed in terms of the squares of their Minkowski functionals.
\end{itemize}

Additional motivation for presenting the proof is the fact, showed recently in \cite{Pfl-Zwo}, that the so-called symmetrized bidisc may be exhausted by strongly linearly convex domains. On the other hand it cannot be exhausted by domains biholomorphic to convex ones (\cite{Edi}). Therefore, the equality of the Lempert function and the Carath\'eodory distance for strongly linearly convex domains does not follow directly from \cite{Lem2}.

\section{Introduction and results}

Let us recall the objects we will deal with. Throughout the paper $\DD$ denotes the unit open disc on the complex plane, $\TT$ is the unit circle and $p$ --- the Poincar\'e distance on $\DD$.

Let $D\subset\CC^{n}$ be a domain and let $z,w\in D$, $v\in\CC^{n}$. The {\it Lempert function}\/ is defined as
\begin{equation}\label{lem}
\widetilde{k}_{D}(z,w):=\inf\{p(0,\xi):\xi\in[0,1)\textnormal{ and }\exists f\in \mathcal{O}(\mathbb{D},D):f(0)=z,\ f(\xi)=w\}.
\end{equation} The {\it Kobayashi-Royden \emph{(}pseudo\emph{)}metric}\/ we define as
\begin{equation}\label{kob-roy}
\kappa_{D}(z;v):=\inf\{\lambda^{-1}:\lambda>0\text{ and }\exists f\in\mathcal{O}(\mathbb{D},D):f(0)=z,\ f'(0)=\lambda v\}.
\end{equation}
Note that
\begin{equation}\label{lem1}
\widetilde{k}_{D}(z,w)=\inf\{p(\zeta,\xi):\zeta,\xi\in\DD\textnormal{ and }\exists f\in \mathcal{O}(\mathbb{D},D):f(\zeta)=z,\ f(\xi)=w\},
\end{equation}
\begin{multline}\label{kob-roy1}
\kappa_{D}(z;v)=\inf\{|\lambda|^{-1}/(1-|\zeta|^2):\lambda\in\CC_*,\,\zeta\in\DD\text{ and }\\ \exists f\in\mathcal{O}(\mathbb{D},D):f(\zeta)=z,\ f'(\zeta)=\lambda v\}.
\end{multline}

If $z\neq w$ (respectively $v\neq 0$), a mapping $f$ for which the infimum in \eqref{lem1} (resp. in \eqref{kob-roy1}) is attained, we call a $\wi{k}_D$-\textit{extremal} (or a \textit{Lempert extremal}) for $z,w$ (resp. a $\kappa_D$-\textit{extremal} for $z,v$). A mapping being a $\wi k_D$-extremal or a $\kappa_D$-extremal we will call just an \textit{extremal} or an \textit{extremal mapping}.

We shall say that $f:\DD\longrightarrow D$ is a unique $\wi{k}_D$-extremal for $z,w$ (resp. a unique $\kappa_D$-extremal for $z,v$) if any other $\wi{k}_D$-extremal $g:\DD\longrightarrow D$ for $z,w$ (resp. $\kappa_D$-extremal for $z,v$) satisfies $g=f\circ a$ for some M\"obius function $a$.

In general, $\wi{k}_{D}$ does not satisfy a triangle inequality --- take for example $D_{\alpha}:=\{(z,w)\in\CC^{2}:|z|,|w|<1,\ |zw|<\alpha\}$, $\alpha\in(0,1)$. Therefore, it is natural to consider the so-called \textit{Kobayashi \emph{(}pseudo\emph{)}distance} given by the formula \begin{multline*}k_{D}(w,z):=\sup\{d_{D}(w,z):(d_{D})\text{ is a family of holomorphically invariant} \\\text{pseudodistances less than or equal to }\widetilde{k}_{D}\}.\end{multline*}
It follows directly from the definition that $$k_{D}(z,w)=\inf\left\{\sum_{j=1}^{N}\wi{k}_{D}(z_{j-1},z_{j}):N\in\NN,\ z_{1},\ldots,z_{N}\in
D,\ z_{0}=z,\ z_{N}=w\right\}.$$

The next objects we are dealing with, are the \textit{Carath\'eodory \emph{(}pseudo\emph{)}distance}
$$c_{D}(z,w):=\sup\{p(F(z),F(w)):F\in\mathcal{O}(D,\DD)\}$$
and the \textit{Carath\'eodory-Reiffen \emph{(}pseudo\emph{)}metric}
$$\gamma_D(z;v):=\sup\{|F'(z)v|:F\in\mathcal{O}(D,\DD),\ F(z)=0\}.$$

A holomorphic mapping $f:\DD\longrightarrow D$ is said to be a \emph{complex geodesic} if $c_D(f(\zeta),f(\xi))=p(\zeta,\xi)$ for any $\zeta,\xi\in\DD$.
\bigskip

Here is some notation. Let $z_1,\ldots,z_n$ be the standard complex coordinates in $\CC^n$ and $x_1,\ldots,x_{2n}$ --- the standard real coordinates in $\CC^n=\RR^n+i\RR^n\simeq\RR^{2n}$. We use $T_{D}^\mathbb{R}(a)$, $T_{D}^\mathbb{C}(a)$ to denote a real and a complex tangent space to a $\cC^1$-smooth domain $D$ at a point $a\in\partial D$, i.e. the sets \begin{align*}T_{D}^\mathbb{R}(a):&=\left\{X\in\CC^{n}:\re\sum_{j=1}^n\frac{\partial r}{\partial z_j}(a)X_{j}=0\right\},\\ T_{D}^\mathbb{C}(a):&=\left\{X\in\CC^{n}:\sum_{j=1}^n\frac{\partial r}{\partial z_j}(a)X_{j}=0\right\},\end{align*}
where $r$ is a defining function of $D$. Let $\nu_D(a)$ be the outward unit normal vector to $\partial D$ at $a$.

Let $\mathcal{C}^{k}(\CDD)$, where $k\in(0,\infty]$, denote a class of continuous functions on $\CDD$, which are of class $\cC^k$ on $\DD$ and
\begin{itemize}
\item if $k\in\NN\cup\{\infty\}$ then derivatives up to the order $k$ extend continuously on~$\CDD$;
\item if $k-[k]=:c>0$ then derivatives up to the order $[k]$ are $c$-H\"older continuous on $\DD$.
\end{itemize}
By $\mathcal{C}^\omega$ class we shall denote real analytic functions. Further, saying that $f$ is of class $\mathcal{C}^{k}(\TT)$, $k\in(0,\infty]\cup\{\omega\}$, we mean that the function $t\longmapsto f(e^{it})$, $t\in\RR$, is in $\mathcal{C}^{k}(\mathbb R)$. For a compact set $K\su\CC^n$ let $\OO(K)$ denote the set of functions extending holomorphically on a neighborhood of $K$ (we assume that all neighborhoods are open). In that case we shall sometimes say that a given function is of class $\OO(K)$. Note that $\CLW(\TT)=\OO(\TT)$. 

Let $|\cdot|$ denote the Euclidean norm in $\CC^{n}$ and let $\dist(z,S):=\inf\{|z-s|:s\in S\}$ be a distance of the point $z\in\CC^n$ to the set $S\su\CC^n$. For such a set $S$ we define $S_*:=S\setminus\{0\}$. Let $\BB_n:=\{z\in\CC^n:|z|=1\}$ be the unit ball and $B_n(a,r):=\{z\in\CC^n:|z-a|<r\}$ --- an open ball with a center $a\in\CC^n$ and a radius $r>0$. Put $$z\bullet w:=\sum_{j=1}^nz_{j}{w}_{j}$$ for $z,w\in\CC^{n}$ and let $\langle\cdotp,-\rangle$ be a hermitian inner product on $\CC^n$. The real inner product on $\CC^n$ is denoted by $\langle\cdotp,-\rangle_{\RR}=\re\langle\cdotp,-\rangle$.

We use $\nabla$ to denote the gradient $(\pa/\pa x_1,\ldots,\pa/\pa x_{2n})$. For real-valued functions the gradient is naturally identified with $2(\pa/\pa\ov z_1,\ldots,\pa/\pa\ov z_n)$. Recall that $$\nu_D(a)=\frac{\nabla r(a)}{|\nabla r(a)|}.$$ Let $\mathcal{H}$ be the Hessian matrix $$\left[\frac{\pa^2}{\pa x_j\pa x_k}\right]_{1\leq j,k\leq 2n}.$$ Sometimes, for a $\cC^2$-smooth function $u$ and a vector $X\in\RR^{2n}$ the Hessian $$\sum_{j,k=1}^{2n}\frac{\partial^2 u}{\partial x_j\partial x_k}(a)X_{j}X_{k}=X^T\HH u(a)X$$ will be denoted by $\HH u(a;X)$. By $\|\cdot\|$ we denote the operator norm.
\bigskip
\begin{df}\label{29}
Let $D\subset\CC^{n}$ be a domain.

We say that $D$ is \emph{linearly convex} (resp. \emph{weakly linearly convex}) if through any point $a\in\mathbb C^n\setminus D$ (resp. $a\in \partial D$) there goes an $(n-1)$-dimensional complex hyperplane disjoint from $D$.

A domain $D$ is said to be \emph{strongly linearly convex} if
\begin{enumerate}
\item $D$ has $\mathcal{C}^{2}$-smooth boundary;
\item there exists a defining function $r$ of $D$ such that
\begin{equation}\label{48}\sum_{j,k=1}^n\frac{\partial^2 r}{\partial z_j\partial\overline z_k}(a)X_{j}\overline{X}_{k}>\left|\sum_{j,k=1}^n\frac{\partial^2 r}{\partial z_j\partial z_k}(a)X_{j}X_{k}\right|,\ a\in\partial D,\ X\in T_{D}^\mathbb{C}(a)_*.\end{equation}
\end{enumerate}

More generally, any point $a\in\pa D$ for which there exists a defining function $r$ satisfying \eqref{48}, is called a \emph{point of the strong linear convexity} of $D$.

Furthermore, we say that a domain $D$ has \emph{real analytic boundary} if it possesses a real analytic defining function.
\end{df}

Note that the condition \eqref{48} does not depend on the choice of a defining function of $D$.

\begin{rem}
Let $D\subset\CC^{n}$ be a strongly linearly convex domain. Then
\begin{enumerate}
\item any $(n-1)$-dimensional complex tangent hyperplane intersects $\partial{D}$ at precisely one point; in other words $$\overline D\cap(a+T_{D}^\mathbb{C}(a))=\{a\},\ a\in\pa D;$$
\item for $a\in\pa D$ the equation $\langle w-a, \nu_D(a)\rangle=0$ describes the $(n-1)$-dimensional complex tangent hyperplane $a+T_{D}^\mathbb{C}(a)$, consequently $$\langle z-a, \nu_D(a)\rangle\neq 0,\ z\in D,\ a\in\pa D.$$
\end{enumerate}
\end{rem}
\bigskip
The main aim of the paper is to present a detailed proof of the following

\begin{tw}[Lempert Theorem]\label{lem-car}
Let $D\subset\CC^{n}$, $n\geq 2$, be a bounded strongly linearly convex domain. Then $$c_{D}=k_{D}=\wi{k}_{D}\text{\,\ and\,\, }\gamma_D=\kappa_D.$$
\end{tw}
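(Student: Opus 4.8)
The plan is to follow Lempert's original strategy, organizing the proof around the construction and study of \emph{stationary mappings}---proper holomorphic discs $f:\DD\to D$ extending smoothly to $\CDD$ whose boundary behaves extremally with respect to the geometry of $\pa D$, in the sense that there exists a map $\wi f:\TT\to\CC^n_*$ of the form $\wi f(\zeta)=\ov\zeta\,\rho(\zeta)\,\nu_D(f(\zeta))$ with $\rho>0$ which extends holomorphically to $\DD$ and satisfies $\zeta\,\wi f(\zeta)\in\OO(\CDD)$. The first block of work is the local geometry of $\pa D$ near the boundary of a disc: using that strong linear convexity is captured by the positivity of the complex Hessian of $r$ relative to $|\text{holomorphic Hessian}|$ on $T^\CC_D(a)$, one rewrites the relevant boundary conditions in terms of $u=|\cdot|$-type Minkowski-square functions, obtaining the curvature estimates that make the later Fredholm/implicit-function arguments non-degenerate. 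The key structural input is that $\ov D\cap(a+T^\CC_D(a))=\{a\}$, which gives the ``no return'' behaviour of supporting hyperplanes.

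Next I would prove \emph{existence} of stationary mappings through a given point with a given point or direction. This is the analytic heart: one sets up the boundary-value (Riemann--Hilbert type) problem for the pair $(f,\wi f)$, linearizes it, computes the partial indices of the associated loop of matrices (this is where strong linear convexity enters, forcing all partial indices to be nonnegative and the linearized operator to be surjective with the right-dimensional kernel), and then runs a continuity method: deform the given strongly linearly convex domain to the ball $\BB_n$ (for which the stationary mappings are exactly the complex geodesics $\zeta\mapsto a+\zeta b$ composed with automorphisms), and use the implicit function theorem along the deformation. Simultaneously one needs \emph{uniqueness} up to reparametrization, again from the index computation: the solution space has exactly the dimension of the Möbius group.

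Then comes the bridge from stationary mappings to extremals. A stationary mapping $f$ automatically admits a ``dual'' bounded holomorphic left inverse: from $\wi f$ one builds $F\in\OO(D,\ov\DD)$ (essentially $F(z)=\langle z-f(\cdot),\wi f(\cdot)\rangle$-type construction solved via a $\ov\pa$/Cauchy-type formula) with $F\circ f=\id_\DD$, which forces $c_D(f(\zeta),f(\xi))\ge p(\zeta,\xi)$; combined with the trivial $c_D\le\wi k_D$ and $\wi k_D\le p$ along $f$, this shows every stationary mapping is a complex geodesic, hence a $\wi k_D$-extremal realizing $c_D$. To finish $c_D=\wi k_D$ for \emph{arbitrary} pairs $z,w\in D$ one shows conversely that a $\wi k_D$-extremal must (after the uniqueness theory) coincide with a stationary mapping through $z,w$: one takes an extremal disc, uses boundary regularity (Lempert's reflection/elliptic-regularity argument) to get $\cC^2(\CDD)$ smoothness, and shows it satisfies the stationarity condition because otherwise one could perturb it to decrease $p(0,\xi)$. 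The equalities $k_D=\wi k_D$ and $\gamma_D=\kappa_D$ then follow formally: $c_D\le k_D\le\wi k_D=c_D$ gives the first, and differentiating complex geodesics (or applying the same machinery to the infinitesimal problem \eqref{kob-roy1}) gives the metric version.

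The main obstacle I expect is the global existence step via the continuity method: keeping the linearized Riemann--Hilbert operator uniformly invertible along the whole homotopy from $D$ to $\BB_n$, which requires the partial-index computation to be stable and requires a priori $\cC^2(\CDD)$-bounds on the family of stationary discs so that the deformation does not degenerate (discs escaping to the boundary, $\rho\to0$, or loss of smoothness). Controlling $\dist(f(\DD),\pa D)$ from below and $\rho$ away from $0$, using only the $\cC^2$ hypothesis rather than real-analyticity, is precisely the point where the present paper differs from and refines Lempert's exposition, and it is where the localization of the geometry to neighborhoods of disc boundaries does the essential work.
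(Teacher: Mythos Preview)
Your overall architecture matches the paper's: stationary mappings, left inverses giving complex geodesics, and a continuity/implicit-function argument for existence. But there is a genuine strategic gap in your existence step.

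You propose to ``deform the given strongly linearly convex domain to the ball $\BB_n$'' and run the continuity method along that homotopy. This works for strongly \emph{convex} domains---the paper does exactly this, interpolating the squared Minkowski functionals $t\mu_D^2+(1-t)\mu_{\BB_n}^2$---but it is not available for general strongly linearly convex $D$, because there is no homotopy through strongly linearly convex domains to the ball. Indeed, the paper's introduction recalls that the symmetrized bidisc is exhausted by strongly linearly convex domains yet cannot be exhausted by domains biholomorphic to convex ones; this is precisely why the strongly linearly convex case does not reduce to Lempert's 1981 convex result.

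The paper circumvents this as follows. First it proves the result for strongly convex real-analytic domains via the Minkowski-functional homotopy you describe. Then, for an arbitrary strongly linearly convex $D$ with real-analytic boundary, it observes that $\partial D$ always contains a point $\eta$ of strong \emph{convexity} (the farthest boundary point from an interior point). Near $\eta$ one invokes a \emph{localization property}: after a local change of coordinates one deforms only the defining function in a neighborhood of a model disc's boundary (not the whole domain) to produce a small $E$-mapping of $D$ whose image lies in an arbitrarily small neighborhood of $\eta$. The continuity method is then run not in the domain but in the \emph{data}: one moves the basepoint and the target point (or direction) along curves in $D$, carrying the $E$-mapping along by the implicit function theorem; closedness uses the uniform $\cC^{1/2}$ estimates on $f,\wi f,\rho$ for domains in a fixed class $\mathcal D(c)$.

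Two smaller points. First, the paper separates \emph{stationary} mappings from \emph{$E$-mappings}: the latter carry the extra winding-number condition $\wind\varphi_z=0$, automatic for convex $D$ but not in general, and it is exactly this condition that makes the left inverse work---$F(z)$ is defined as the unique zero in $\DD$ of $\zeta\mapsto(z-f(\zeta))\bullet\wi f(\zeta)$ via the argument principle, not by a $\ov\partial$ construction. Second, you do not need to prove that an arbitrary extremal is \emph{a priori} smooth and stationary: once $E$-mappings through every pair of points exist, the uniqueness result (any extremal sharing two values with an $E$-mapping must equal it, via the left inverse and the Schwarz lemma) forces every extremal to be an $E$-mapping. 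Finally, the $\cC^2$ case is handled by approximating $D$ by real-analytic strongly linearly convex domains (polynomial truncations of the defining function) and passing to the limit using the uniform H\"older bounds; this is where the class $\mathcal D(c)$ and its stability under such approximation do the essential work.
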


\bigskip

An important role will be played by strongly convex domains and strongly convex functions.
\begin{df}
A domain $D\subset\CC^{n}$ is called \emph{strongly convex} if
\begin{enumerate}
\item $D$ has $\mathcal{C}^{2}$-smooth boundary;
\item there exists a defining function $r$ of $D$ such that
\begin{equation}\label{sc}\sum_{j,k=1}^{2n}\frac{\partial^2 r}{\partial x_j\partial x_k}(a)X_{j}X_{k}>0,\ a\in\partial D,\ X\in T_{D}^\mathbb{R}(a)_*.\end{equation}
\end{enumerate}
Generally, any point $a\in\pa D$ for which there exists a defining function $r$ satisfying \eqref{sc}, is called a \emph{point of the strong convexity} of $D$.
\end{df}
\begin{rem}
A strongly convex domain $D\subset\CC^{n}$ is convex and strongly linearly convex. Moreover, it is strictly convex, i.e. for any different points $a,b\in\overline D$ the interior of the segment $[a,b]=\{ta+(1-t)b:t\in [0,1]\}$ is contained in $D$ (i.e. $ta+(1-t)b\in D$ for any $t\in(0,1)$).

Observe also that any bounded convex domain with a real analytic boundary is strictly convex. Actually, if a domain $D$ with a real analytic boundary were not strictly convex, then we would be able to find two distinct points $a,b\in\pa D$ such that the segment $[a,b]$ lies entirely in $\partial D$. On the other hand, the identity principle would imply that the set $\{t\in\mathbb R:\exists\eps>0:sa+(1-s)b\in\pa D\text{ for }|s-t|<\eps\}$ is open-closed in $\mathbb R$. Therefore it has to be empty. This immediately gives a contradiction.
\end{rem}

\begin{rem}
It is well-known that for any convex domain $D\su\CC^{n}$ there is a sequence $\{D_m\}$ of bounded strongly convex domains with real analytic boundaries, such that $D_m\su D_{m+1}$ and $\bigcup_m D_m=D$. 

In particular, Theorem~\ref{lem-car} holds for convex domains.
\end{rem}

\begin{df}
Let $U\su\CC^n$ be a domain. A function $u:U\longrightarrow\RR$ is called \emph{strongly convex} if
\begin{enumerate}
\item $u$ is $\mathcal{C}^{2}$-smooth;
\item $$\sum_{j,k=1}^{2n}\frac{\partial^2 u}{\partial x_j\partial x_k}(a)X_{j}X_{k}>0,\ a\in U,\ X\in(\RR^{2n})_*.$$
\end{enumerate}
\end{df}

\begin{df} A degree of a continuous function (treated as a curve) $:\mathbb T\longrightarrow\mathbb T$ is called its winding number. The fundamental group is a homotopy invariant. Thus the definition of the \emph{winding number of a continuous function} $\phi:\mathbb T\longrightarrow\mathbb C_*$ is the same. We denote it by $\wind\phi$. 

In the case of a $\cC^1$-smooth function $\phi:\TT\longrightarrow\CC_*$, its winding number is just the index of $\phi$ at 0, i.e. $$\wind\phi=\frac{1}{2\pi i}\int_{\phi(\TT)}\frac{d\zeta}{\zeta}=\frac{1}{2\pi i}\int_{0}^{2\pi}\frac{\frac{d}{dt}\phi(e^{it})}{\phi(e^{it})}dt.$$
\end{df}

\begin{rem}\label{49}
\begin{enumerate}
\item\label{51} If $\phi\in\cC(\TT,\CC_*)$ extends to a function $\widetilde{\phi}\in\OO(\DD)\cap \mathcal C(\CDD)$ then $\wind\phi$ is the number of zeroes of $\widetilde{\phi}$ in $\DD$ counted with multiplicities;
\item\label{52} $\wind(\phi\psi)=\wind\phi+\wind\psi$, $\phi,\psi\in\cC(\TT,\CC_*)$;
\item\label{53} $\wind\phi=0$ if $\phi\in\cC(\TT)$ and $\re\phi>0$.
\end{enumerate}
\end{rem}

\begin{df}
The boundary of a domain $D$ of $\mathbb C^n$ is \emph{real analytic in a neighborhood} $U$ of the set $S\su\pa D$ if there exists a function $r\in\mathcal C^{\omega}(U,\RR)$ such that $D\cap U=\{z\in U:r(z)<0\}$ and $\nabla r$ does not vanish in $U$.
\end{df}

\begin{df}\label{21}
Let $D\subset\CC^{n}$ be a domain. We call a holomorphic mapping $f:\DD\longrightarrow D$ a \emph{stationary mapping} if
\begin{enumerate}
\item $f$ extends to a holomorphic mapping in a neighborhood od $\CDD$ $($denoted by the same letter$)$;
\item $f(\TT)\subset\partial D$;
\item there exists a real analytic function
$\rho:\TT\longrightarrow\RR_{>0}$ such that the mapping $\TT\ni\zeta\longmapsto\zeta
\rho(\zeta)\overline{\nu_D(f(\zeta))}\in\CC^{n}$ extends to a mapping holomorphic in a neighborhood of $\CDD$ $($denoted by $\widetilde{f}${$)$}.
\end{enumerate}

Furthermore, we call a holomorphic mapping $f:\DD\longrightarrow D$ a \emph{weak stationary mapping} if
\begin{enumerate}
\item[(1')] $f$ extends to a $\cC^{1/2}$-smooth mapping on $\CDD$ $($denoted by the same letter$)$;
\item[(2')] $f(\TT)\subset\partial D$;
\item[(3')] there exists a $\cC^{1/2}$-smooth function
$\rho:\TT\longrightarrow\RR_{>0}$ such that the mapping $\TT\ni\zeta\longmapsto\zeta
\rho(\zeta)\overline{\nu_D(f(\zeta))}\in\CC^{n}$ extends to a mapping $\widetilde{f}\in\OO(\DD)\cap\cC^{1/2}(\CDD)$.
\end{enumerate}

The definition of a $($weak$)$ stationary mapping $f:\mathbb D\longrightarrow D$ extends naturally to the case when $\pa D$ is real analytic in a neighborhood of $f(\TT)$.
\end{df}

Directly from the definition of a stationary mapping $f$, it follows that $f$ and $\wi f$ extend holomorphically on some neighborhoods of $\CDD$. By $\DD_f$ we shall denote their intersection.

\begin{df}\label{21e}
Let $D\su\CC^n$, $n\geq 2$, be a bounded strongly linearly convex domain with real analytic boundary. A holomorphic mapping $f:\DD\longrightarrow D$ is called a (\emph{weak}) $E$-\emph{mapping} if it is a (weak) stationary mapping and
\begin{enumerate}
\item[(4)] setting $\varphi_z(\zeta):=\langle z-f(\zeta),\nu_D(f(\zeta))\rangle,\ \zeta\in\TT$, we have $\wind\phi_z=0$ for some $z\in D$.
\end{enumerate}
\end{df}

\begin{rem}
The strong linear convexity of $D$ implies $\varphi_z(\zeta)\neq 0$ for any $z\in D$ and $\zeta\in\TT$. Therefore, $\wind\phi_z$ vanishes for all $z\in D$ if it vanishes for some $z\in D$.

Additionally, any stationary mapping of a convex domain is an $E$-mapping (as $\re \varphi_z<0$).
\end{rem}

We shall prove that in a class of non-planar bounded strongly linearly convex domains with real analytic boundaries weak stationary mappings are just stationary mappings, so there is no difference between $E$-mappings and weak $E$-mappings. 

We have the following result describing extremal mappings, which is very interesting in its own.

\begin{tw}\label{main} Let $D\su\CC^n$, $n\geq 2$, be a bounded strongly linearly convex domain. 

Then a holomorphic mapping $f:\DD\longrightarrow D$ is an extremal if and only if $f$ is a weak $E$-mapping.

For a domain $D$ with real analytic boundary, a holomorphic mapping $f:\mathbb D\longrightarrow D$ is an extremal if and only if $f$ is an $E$-mapping.

If $\pa D$ is of class $\cC^k$, $k=3,4,\ldots,\infty$, then any weak $E$-mapping $f:\DD\longrightarrow D$ and its associated mappings $\wi f,\rho$ are $\mathcal C^{k-1-\eps}$-smooth for any $\eps>0$.

\end{tw}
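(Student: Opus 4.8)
The plan is to establish Theorem~\ref{main} in three logically distinct parts, mirroring its statement: (i) every extremal is a weak $E$-mapping; (ii) every weak $E$-mapping is an extremal; and (iii) the boundary regularity of weak $E$-mappings inherited from the regularity of $\pa D$. The real-analytic refinement in the middle sentence will then follow by combining (i) and (ii) with a bootstrapping argument that upgrades weak stationarity to stationarity when $\pa D$ is real analytic, together with the observation (already announced in the excerpt) that on non-planar strongly linearly convex domains with real-analytic boundary weak stationary mappings coincide with stationary mappings.

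For part (i), I would start from a $\wi k_D$-extremal (the $\kappa_D$ case is handled identically, taking $\zeta=\xi$ and passing to derivatives) and use the equality $c_D=\wi k_D$, which is the content of Theorem~\ref{lem-car}: thus an extremal $f$ is a complex geodesic, and there is a left inverse $F\in\OO(D,\DD)$ with $F\circ f=\id_\DD$. One then solves the associated dual extremal problem: the functional realizing the Carath\'eodory distance gives, via a standard Hahn--Banach / variational argument on $H^1$, a ``dual map'' $\wi f$ with $\wi f(\zeta)\bullet f'(\zeta)$ of constant sign on $\TT$ and $\wi f(\zeta)$ proportional to $\zeta\,\ov{\nu_D(f(\zeta))}$; this forces $f(\TT)\su\pa D$ and produces the function $\rho$. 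The Kobayashi--Royden theory combined with the regularity theory for the $\ov\pa$-problem (or Lempert's original use of the Hilbert transform / Riemann--Hilbert problems) gives that $f$ extends $\cC^{1/2}$ past $\TT$, so $f$ is a weak stationary mapping; the winding condition (4) comes from the existence of the left inverse $F$, since $\varphi_z$ then extends holomorphically to $\DD$ without zeros for $z=f(0)$, giving $\wind\varphi_z=0$ by Remark~\ref{49}\eqref{51}.

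For part (ii), conversely, given a weak $E$-mapping $f$ I would construct from $\wi f$ and the winding hypothesis (4) a holomorphic left inverse $F:D\to\DD$, i.e.\ a solution of a Riemann--Hilbert-type interpolation problem with boundary data prescribed by $\nu_D\circ f$; the vanishing of $\wind\varphi_z$ is exactly the index condition making this problem solvable. Then $c_D(f(\zeta),f(\xi))\geq p(F(f(\zeta)),F(f(\xi)))=p(\zeta,\xi)\geq\wi k_D(f(\zeta),f(\xi))\geq c_D(f(\zeta),f(\xi))$, so equality holds throughout and $f$ is a complex geodesic, hence a $\wi k_D$-extremal; differentiating handles the $\kappa_D$ case. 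For part (iii), the regularity statement, one feeds the $\cC^{1/2}$ information back into the Riemann--Hilbert system satisfied by the pair $(f,\wi f)$ — schematically $\wi f(\zeta)=\zeta\rho(\zeta)\ov{\nu_D(f(\zeta))}$ on $\TT$ with $f,\wi f\in\OO(\DD)$ — and applies elliptic boundary regularity (Privalov/Hardy--Littlewood estimates for the Cauchy transform, or the regularity theory for linear Riemann--Hilbert problems) iteratively: if $\pa D\in\cC^k$ then $\nu_D\in\cC^{k-1}$, and each pass through the Hilbert transform loses only an $\eps$ in H\"older scale, yielding $f,\wi f,\rho\in\cC^{k-1-\eps}$.

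The main obstacle, as in Lempert's original argument, is part (i): proving that an extremal actually extends smoothly to $\ov\DD$ and satisfies the stationarity relations. This is where the strong linear convexity is used in an essential, quantitative way — it guarantees $\varphi_z\neq0$ on $\TT$ (Remark after Definition~\ref{29}) and controls the geometry of $\pa D$ near $f(\TT)$, so that the Riemann--Hilbert problem governing the boundary behavior of $f$ has the right Fredholm index and its solutions have the claimed regularity. I expect the technical heart to be the careful setup of this boundary-value problem in terms of the squares of Minkowski functionals (as advertised in the introduction), localized near $f(\TT)$, together with the uniqueness/openness arguments needed to propagate regularity; the convex case, where $\re\varphi_z<0$ makes the index computation trivial, is the model to keep in mind.
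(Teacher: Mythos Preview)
Your part (ii) is essentially the paper's argument: the left inverse $F$ is built exactly as in Proposition~\ref{1} from the winding condition, and Corollary~\ref{5}/Corollary~\ref{28} then give that any weak $E$-mapping is a complex geodesic, hence an extremal. Good.

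Part (i) is where your outline diverges from the paper, and where there is a genuine gap. The paper does \emph{not} start from an extremal $f$ and construct $\wi f$ by a dual extremal/Hahn--Banach argument on $H^1$. That argument (Lempert 1981, Royden--Wong) uses real supporting hyperplanes and hence convexity in an essential way; for strongly \emph{linearly} convex domains one only has complex tangent hyperplanes, and no ``standard'' $H^1$-duality produces $\wi f$ and the $\cC^{1/2}$ boundary regularity directly. Your sentence ``regularity theory for the $\ov\pa$-problem \ldots\ gives that $f$ extends $\cC^{1/2}$'' is not justified in this setting. The paper's route is indirect: Lemma~\ref{lemat} proves, by a continuity (open--closed) method using the Implicit Function Theorem in the Banach space $W^{2,2}(\TT)$ (Lemmas~\ref{cruciallemma}, \ref{cruciallemma1}) together with the uniform H\"older estimates of Section~\ref{22}, that an $E$-mapping \emph{exists} through any prescribed pair $(z,w)$ (resp.\ $(z;v)$). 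Corollary~\ref{28} then says this $E$-mapping is the \emph{unique} extremal for that data, so any other extremal agrees with it up to a M\"obius reparametrization, which by Proposition~\ref{3} is again a (weak) $E$-mapping. Thus ``extremal $\Rightarrow$ $E$-mapping'' comes from existence plus uniqueness, not from a direct construction of $\wi f$. Note also that invoking Theorem~\ref{lem-car} as an input, as you do, is slightly circular: in the paper both Theorem~\ref{lem-car} and Theorem~\ref{main} are deduced from the same existence lemma. In the $\cC^2$ case the paper approximates $D$ by strongly linearly convex domains $D_m$ with real analytic boundary (Lemma~\ref{un}), takes the $E$-mappings $f_m$ of $D_m$, and uses the uniform $\cC^{1/2}$ bounds to pass to a limit weak $E$-mapping of $D$; uniqueness again closes the argument.

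For part (iii) your bootstrapping idea is reasonable in spirit, but the paper takes a cleaner route: it observes (following Webster) that $\psi(\pa D)=\{(z,T^\CC_D(z)):z\in\pa D\}\subset\CC^n\times\mathbb P^{n-1}$ is a totally real $\cC^{k-1}$ submanifold, and that $(f,\pi\circ\wi f)$ is a holomorphic disc with $\cC^{1/2}$ boundary values in $\psi(\pa D)$. The reflection principle/regularity for discs attached to totally real manifolds then yields $\cC^{k-1-\eps}$ smoothness of $f$ (and hence of $\wi f,\rho$) in one step, rather than by iterated Privalov estimates.
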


The idea of the proof of the Lempert Theorem is as follows. In real analytic case we shall show that $E$-mappings are complex geodesics (because they have left inverses). Then we shall prove that for any different points $z,w\in D$ (resp. for a point $z\in D$ and a vector $v\in(\CC^n)_*$) there is an $E$-mapping passing through $z,w$ (resp. such that $f(0)=z$ and $f'(0)=v$). This will give the equality between the Lempert function and the Carath\'eodory distance. In the general case, we exhaust a $\cC^2$-smooth domain by strongly linearly convex domains with real analytic boundaries.

To prove Theorem \ref{main} we shall additionally observe that (weak) $E$-mappings are unique extremals.
\bigskip

\begin{center}{\sc Real analytic case}\end{center}
\bigskip

In what follows and if not mentioned otherwise, $D\su\CC^n$, $n\geq 2$, is a \textbf{bounded strongly linearly convex domain with real analytic boundary}.
\section{Weak stationary mappings of strongly linearly convex domains with real analytic boundaries are stationary mappings}\label{55}
Let $M\subset\CC^m$ be a totally real $\CLW$ submanifold of the real dimension $m$. Fix a point $z\in M$. There are neighborhoods $U,V\su\CC^m$ of $0$ and $z$ respectively and a biholomorphic mapping $\Phi:U\longrightarrow V$ such that $\Phi(\RR^m\cap U)=M\cap V$ (for the proof see Appendix).

\begin{prop}\label{6}
A weak stationary mapping of $D$ is a stationary mapping of $D$ with the same associated mappings.
\end{prop}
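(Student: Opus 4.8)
The statement asks us to upgrade a weak stationary mapping $f$ — a priori only $\cC^{1/2}$-smooth on $\CDD$, with $\wi f \in \OO(\DD)\cap\cC^{1/2}(\CDD)$ and $\rho\in\cC^{1/2}(\TT,\RR_{>0})$ — to a genuine stationary mapping, i.e.\ to show $f,\wi f$ extend holomorphically across $\TT$ to a neighborhood of $\CDD$ and that $\rho$ is real analytic, \emph{with the same} $\wi f$ and $\rho$. The plan is a bootstrapping (elliptic regularity / reflection) argument for the analytic disc $f$ attached to the real analytic hypersurface $\pa D$, exploiting that the pair $(f,\wi f)$ satisfies a nonlinear boundary relation coming from the definition of $\nu_D$.

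\textbf{Step 1: Set up the boundary system.} On $\TT$ we have $f(\zeta)\in\pa D$, hence $r(f(\zeta))=0$, where $r\in\CLW$ is a defining function; and $\wi f(\zeta) = \zeta\rho(\zeta)\ov{\nu_D(f(\zeta))} = \zeta\rho(\zeta)\,\ov{\nabla r(f(\zeta))}/|\nabla r(f(\zeta))|$. Eliminating $\rho$ (which is real and positive), the vectors $\wi f(\zeta)/\zeta$ and $\ov{\nabla r(f(\zeta))}$ are positive real multiples of each other; equivalently $\ov\zeta\,\wi f(\zeta)$ is a positive multiple of $\ov{\nabla r(f(\zeta))}$. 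Writing $g(\zeta):=\ov\zeta\wi f(\zeta)$ (defined on $\TT$, $\cC^{1/2}$), this says: there is a real $\cC^{1/2}$ function $\rho>0$ with $g(\zeta)=\rho(\zeta)\ov{\nabla r(f(\zeta))}$ on $\TT$. Using $\nabla r$ for a real-valued $r$ is $2\,\pa r/\pa\ov z$, so $\ov{\nabla r(f(\zeta))} = 2\,\ov{(\pa r/\pa \ov z)(f(\zeta))}$, which is an \emph{antiholomorphic} function of the components of $f(\zeta)$ — crucially $r\in\CLW$, so $\pa r/\pa\ov z_j$ is real analytic in $(z,\ov z)$. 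Thus on $\TT$ we obtain a real analytic relation $G(\zeta,f(\zeta),\ov{f(\zeta)},\wi f(\zeta),\ov{\wi f(\zeta)})=0$, together with $r(f(\zeta),\ov{f(\zeta)})=0$. Here $f,\wi f$ are the boundary values of holomorphic functions on $\DD$ and $\ov f,\ov{\wi f}$ are the boundary values of antiholomorphic ones.

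\textbf{Step 2: Analyticity from the reflection / elliptic system.} This is the analytic heart and the main obstacle. The system in Step 1 is of the form "$(n+1)$ holomorphic functions on $\DD$, continuous up to $\TT$, whose boundary values lie on a real analytic totally real submanifold (of the appropriate dimension) of $\CC^{2n}$ or are constrained by real analytic equations". One wants to invoke the classical boundary regularity for analytic discs attached to real analytic totally real (or, more generally, generic CR) submanifolds: such discs, if merely $\cC^{1/2}$ (or even just continuous with some mild extra control), extend holomorphically past $\TT$. The natural tool is the local description recalled right before the proposition: near a point $w_0=f(\zeta_0)$ of the totally real manifold $M$ (here one takes $M$ built from $\pa D$ together with the normal-direction data, so that $(f,\wi f)$ is attached to $M$), there is a biholomorphism $\Phi$ straightening $M$ to $\RR^m\cap U$. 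In the straightened picture the disc $\Phi\circ(f,\wi f)$ has boundary values in $\RR^m$, so by the Schwarz reflection principle (reflecting $\zeta\mapsto 1/\ov\zeta$ and the target across $\RR^m$) it extends holomorphically across the corresponding arc of $\TT$; pulling back by $\Phi^{-1}$ gives holomorphic extension of $f$ and $\wi f$ across that arc. Covering $\TT$ by finitely many such arcs (compactness) yields holomorphic extension to a full neighborhood of $\CDD$. One must check that the relevant $M$ is indeed totally real of the right dimension and that $(f,\wi f)$ genuinely maps $\TT$ into it — this uses the strong linear convexity only insofar as it guarantees $\rho>0$ is bounded away from $0$ and $\nu_D$ is well-defined and real analytic along $f(\TT)$, so $M$ is a $\CLW$ submanifold and the map lands in it. A point of care: $f$ is a priori only $\cC^{1/2}$, so one cannot differentiate the boundary relation directly; reflection is used precisely to avoid this, as it needs only continuity of the boundary values on a totally real target. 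An alternative, more hands-on route is a Vekua-type / Carleman–Bers–Vekua argument: differentiate the straightened system and solve a linear $\ov\pa$ (Beltrami-type) boundary value problem with Hölder coefficients, obtaining first $\cC^{1,\alpha}$, then bootstrapping $\cC^{k,\alpha}$ for all $k$, and finally analyticity from the $\CLW$ coefficients via the classical result that solutions of elliptic systems with real analytic data are real analytic up to the boundary.

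\textbf{Step 3: Recover $\rho$ and conclude "same associated mappings".} Once $f$ and $\wi f$ are holomorphic in a neighborhood of $\CDD$, on $\TT$ we have $\rho(\zeta) = |\wi f(\zeta)| / (|\zeta|\,|\nu_D(f(\zeta))|) = |\wi f(\zeta)|$ (since $|\zeta|=1$ and $|\nu_D|=1$), but more usefully $\rho(\zeta)\ov{\nu_D(f(\zeta))} = \ov\zeta\,\wi f(\zeta)$; taking the hermitian product with $\nabla r(f(\zeta))/|\nabla r(f(\zeta))| = \nu_D(f(\zeta))$ gives $\rho(\zeta) = \langle \ov\zeta\,\wi f(\zeta), \nu_D(f(\zeta))\rangle$, which exhibits $\rho$ as the restriction to $\TT$ of a real analytic function of $\zeta$ (composition of the now-holomorphic $\wi f$, the $\CLW$ map $\nu_D\circ f$, and $\ov\zeta = 1/\zeta$ on $\TT$). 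Hence $\rho\in\CLW(\TT)$. The data $\wi f$ and $\rho$ are unchanged throughout — we only proved extra regularity of the objects already given — so $f$ is a stationary mapping of $D$ with the same associated mappings $\wi f$ and $\rho$, as claimed. The final sentence of Definition~\ref{21} (the case where $\pa D$ is real analytic only near $f(\TT)$) is handled identically, since the argument is entirely local along $f(\TT)$.

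The step I expect to be the genuine obstacle is Step 2: correctly identifying the totally real manifold $M$ to which $(f,\wi f)$ is attached, verifying its dimension and total reality using strong linear convexity, and justifying that $\cC^{1/2}$ boundary regularity suffices for holomorphic extension by reflection (or, equivalently, running the linear $\ov\pa$-bootstrap cleanly with Hölder coefficients). Everything else — the algebraic elimination of $\rho$ in Step 1 and its recovery in Step 3 — is routine.
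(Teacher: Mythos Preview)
Your plan is the paper's: attach a holomorphic disc to a real-analytic totally real manifold built from $\pa D$ and its normal direction, then apply the Reflection Principle. The gap is exactly where you flagged it, and your Step~2 misidentifies both the attached map and the role of strong linear convexity. The pair $(f,\wi f)$ cannot be attached directly: the factor $\zeta\rho(\zeta)$ in $\wi f(\zeta)=\zeta\rho(\zeta)\,\ov{\nu_D(f(\zeta))}$ prevents $(f,\wi f)(\TT)$ from lying on any fixed $2n$-dimensional totally real set in $\CC^{2n}$. The paper's device is to take \emph{ratios}: near $\zeta_0\in\TT$ with, say, $\wi f_1\neq0$, set
\[
g=\bigl(f_1,\ldots,f_n,\ \wi f_2/\wi f_1,\ldots,\wi f_n/\wi f_1\bigr);
\]
on $\TT$ each ratio equals $\ov{\nu_{D,j}(f)/\nu_{D,1}(f)}$, so the $\zeta\rho$ factor cancels and $g$ maps $\TT$ into the $(2n-1)$-dimensional real-analytic graph $M=\{(z,\ov{\nu_{D,2}(z)/\nu_{D,1}(z)},\ldots,\ov{\nu_{D,n}(z)/\nu_{D,1}(z)}):z\in\pa D,\ \nu_{D,1}(z)\neq0\}\subset\CC^{2n-1}$, which has the correct dimension for total reality.

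Second, strong linear convexity is used \emph{essentially} in the total-reality check, not merely to keep $\rho>0$ and $\nu_D$ smooth. Writing $M=S^{-1}(0)$ for the explicit $S(z,w)=\bigl(r(z),\,\frac{\pa r}{\pa z_k}(z)-w_{k-1}\frac{\pa r}{\pa z_1}(z)\bigr)_{k\geq2}$, one computes that any $(X,Y)\in T^{\CC}_M(z,w)$ has $X\in T^{\CC}_D(z)$ and, after a short manipulation, $\frac{\pa r}{\pa z_1}(z)\sum_{j,k}\frac{\pa^2r}{\pa z_k\pa\ov z_j}(z)X_k\ov X_j=0$; since $\frac{\pa r}{\pa z_1}(z)\neq0$ this forces the Levi form to vanish on $X$, hence $X=0$ by the defining inequality of strong linear convexity, and then $Y=0$ follows. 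Once $f$ extends past $\TT$, $\ov{\nu_D\circ f}$ is real analytic on $\TT$ and extends holomorphically near $\TT$ to some $h$; then $\zeta h_1/\wi f_1=1/\rho$ on $\TT$ is holomorphic on one side, continuous up to $\TT$, and real-valued there, so a second reflection gives $\rho\in\CLW(\TT)$ and $\wi f\in\OO(\CDD)$. Your Step~3 formula for $\rho$ also works.
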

\begin{proof}
Let $f:\DD\longrightarrow D$ be a weak stationary mapping. Our aim is to prove that $f,\widetilde{f}\in\OO(\CDD)$ and $\rho\in\mathcal C^{\omega}(\TT)$. Choose a point $\zeta_0\in\TT$. Since $\widetilde{f}(\zeta_0)\neq 0$, we can assume that $\widetilde{f}_1(\zeta)\neq 0$ in $\CDD\cap U_0$, where $U_0$ is a neighborhood of $\zeta_0$. This implies
$\nu_{D,1}(f(\zeta_0))\neq 0$, so $\nu_{D,1}$ does not vanish on some set $V_0\su\pa D$, relatively open in
$\pa D$, containing the point $f(\zeta_0)$. Shrinking $U_0$, if necessary, we may assume that $f(\TT\cap U_0)\subset V_0$.

Define $\psi:V_0\longrightarrow\CC^{2n-1}$ by
$$\psi(z)=\left(z_1,\ldots,z_n,
\ov{\left(\frac{\nu_{D,2}(z)}{\nu_{D,1}(z)}\right)},\ldots,\ov{\left(\frac{\nu_{D,n}(z)}{\nu_{D,1}(z)}\right)}\right).$$ The set $M:=\psi(V_0)$ is the graph of a $\CLW$ function defined on the local $\CLW$ submanifold $V_0$, so it is a local $\CLW$ submanifold in $\CC^{2n-1}$ of the real dimension $2n-1$. Assume for a moment that $M$ is totally real.

Let $$g(\zeta):=\left(f_1(\zeta),\ldots,f_n(\zeta),
\frac{\widetilde{f}_2(\zeta)}{\widetilde{f}_1(\zeta)},\ldots,\frac{\widetilde{f}_n(\zeta)}{\widetilde{f}_1(\zeta)}\right),\ \zeta\in\CDD\cap U_0.$$ If $\zeta\in\TT\cap U_0$ then
$\widetilde{f}_k(\zeta)\widetilde{f}_1(\zeta)^{-1} =
\overline{\nu_{D,k}(f(\zeta))}\ \overline{\nu_{D,1}(f(\zeta))}^{-1}$, so
$g(\zeta)=\psi(f(\zeta))$. Therefore, $g(\TT\cap U_0)\subset M$. Thanks to the Reflection
Principle (see Appendix), $g$ extends holomorphically past $\TT\cap U_0$, so $f$ extends holomorphically on a neighborhood of $\zeta_0$.

The mapping $\overline{\nu_D\circ f}$ is real analytic on $\TT$, so it extends to a mapping $h$ holomorphic in a neighborhood $W$ of $\TT$. For $\zeta\in\TT\cap U_0$ we have $$\frac{\zeta
h_1(\zeta)}{\widetilde{f}_1(\zeta)}=\frac{1}{\rho(\zeta)}.$$ The function on the
left side is holomorphic in $\DD\cap U_0\cap W$ and continuous in $\CDD\cap U_0\cap W$. Since it
has real values on $\TT\cap U_0$, the Reflection Principle implies that it is holomorphic in a neighborhood of $\TT\cap U_0$. Hence $\rho$ and $\widetilde{f}$ are holomorphic in a neighborhood of $\zeta_0$. Since $\zeta_0$ is arbitrary, we get the assertion.

It remains to prove that $M$ is totally real. Let $r$ be a defining function of $D$. Recall that for any point $z\in V_0$ $$\frac{\ov{\nu_{D,k}(z)}}{\ov{\nu_{D,1}(z)}}=\frac{\partial r}{\partial z_k}(z)\left(\frac{\partial r}{\partial z_1}(z)\right)^{-1},\,k=1,\ldots,n.$$
Consider the mapping $S=(S_1,\ldots,S_n):V_0\times\CC^{n-1}\longrightarrow\RR\times\CC^{n-1}$
given by $$S(z,w):=\left(r(z),\frac{\partial r}{\partial z_2}(z)-w_{1}\frac{\partial r}{\partial z_1}(z),\ldots,\frac{\partial r}{\partial z_n}(z)-w_{n-1}\frac{\partial r}{\partial z_1}(z)\right).$$ Clearly, $M=S^{-1}(\{0\})$. Hence
\begin{equation}\label{tan} T_{M}^{\RR}(z,w)\subset\ker\nabla S(z,w),\ (z,w)\in M,\end{equation} where
$\nabla S:=(\nabla S_1,\ldots,\nabla S_n)$.

Fix a point $(z,w)\in M$. Our goal is to prove that $T_{M}^{\CC}(z,w)=\lbrace 0\rbrace$. Take an arbitrary vector $(X,Y)=(X_1,\ldots,X_n,Y_1,\ldots,Y_{n-1})\in T_{M}^{\CC}(z,w)$. Then we infer from \eqref{tan} that $$\sum_{k=1}^n\frac{\partial r}{\partial z_k}(z)X_k=0,$$ i.e. $X\in T_{D}^{\CC}(z)$. Denoting $v:=(z,w)$, $V:=(X,Y)$ and making use of \eqref{tan} again we find that
$$0=\nabla S_k(v)(V)=\sum_{j=1}^{2n-1}\frac{\pa S_k}{\pa v_j}(v)V_j+\sum_{j=1}^{2n-1}\frac{\pa S_k}{\pa\ov v_j}(v)\ov V_j$$ for $k=2,\ldots,n$.
But $V\in T_{M}^{\CC}(v)$, so $iV\in T_{M}^{\CC}(v)$. Thus $$0=\nabla S_k(v)(iV)=i\sum_{j=1}^{2n-1}\frac{\pa S_k}{\pa v_j}(v)V_j-i\sum_{j=1}^{2n-1}\frac{\pa S_k}{\pa\ov v_j}(v)\ov V_j.$$ In particular, \begin{multline*}0=\sum_{j=1}^{2n-1}\frac{\pa S_k}{\pa\ov v_j}(v)\ov V_j=\sum_{j=1}^{n}\frac{\pa S_k}{\pa\ov z_j}(z,w)\ov X_j+\sum_{j=1}^{n-1}\frac{\pa S_k}{\pa\ov w_j}(z,w)\ov Y_j=\\=\sum_{j=1}^n\frac{\partial^2r}{\partial z_k\partial\overline{z}_j}(z)\overline X_j-w_{k-1}\sum_{j=1}^n\frac{\partial^2r}{\partial z_1\partial\overline{z}_j}(z)\overline X_j.
\end{multline*}
The equality $M=S^{-1}(\{0\})$ gives $$w_{k-1}=\frac{\partial r}{\partial z_k}(z)\left(\frac{\partial r}{\partial z_1}(z)\right)^{-1},$$ so $$\frac{\partial r}{\partial z_1}(z)\sum_{j=1}^n\frac{\partial^2r}{\partial z_k\partial\overline{z}_j}(z)\overline X_j=\frac{\partial r}{\partial z_k}(z)\sum_{j=1}^n\frac{\partial^2r}{\partial z_1\partial\overline{z}_j}(z)\overline X_j,\ k=2,\ldots,n.$$ Note that the last equality holds also for $k=1$. Therefore, \begin{multline*}
\frac{\partial r}{\partial z_1}(z)\sum_{j,k=1}^n\frac{\partial^2r}{\partial z_k\partial\overline{z}_j}(z)\overline X_jX_k=\sum_{k=1}^n\frac{\partial r}{\partial z_k}(z)\sum_{j=1}^n\frac{\partial^2r}{\partial z_1\partial\overline{z}_j}(z)\overline X_jX_k =\\=\left(\sum_{k=1}^n\frac{\partial r}{\partial z_k}(z)X_k\right)\left(\sum_{j=1}^n\frac{\partial^2r}{\partial z_1\partial\overline{z}_j}(z)\overline X_j\right)=0.
\end{multline*}
By the strong linear convexity of $D$ we have $X=0$. This implies $Y=0$, since $$0=\nabla S_k(z,w)(0,Y)=\sum_{j=1}^{n-1}\frac{\pa S_k}{\pa w_j}(v)Y_j+\sum_{j=1}^{n-1}\frac{\pa S_k}{\pa\ov w_j}(v)\ov Y_j=-\frac{\partial r}{\partial z_1}(z)Y_{k-1}$$ for $k=2,\ldots,n$. 
\end{proof}

\section{(Weak) $E$-mappings vs. extremal mappings and complex geodesics}

In this section we will prove important properties of (weak) $E$-mappings. In particular, we will show that they are complex geodesics and unique extremals.
\subsection{Weak $E$-mappings are complex geodesics and unique extremals}
The results of this subsection are related to weak $E$-mappings of bounded strongly linearly convex domains $D\su\CC^n$, $n\geq 2$.

Let $$G(z,\zeta):=(z-f(\zeta))\bullet\widetilde{f}(\zeta),\ z\in\CC^n,\ \zeta\in\DD_f.$$

\begin{propp}\label{1}
Let $D\su\CC^n$, $n\geq 2$, be a bounded strongly linearly convex domain and let $f:\DD\longrightarrow D$ be a weak $E$-mapping. Then there exist an open set $W\supset\overline D\setminus f(\TT)$ and a holomorphic mapping $F:W\longrightarrow\DD$ such that for any $z\in W$ the number $F(z)$ is a unique solution of the equation $G(z,\zeta)=0,\ \zeta\in\DD$. In particular, $F\circ f=\id_{\DD}$.
\end{propp}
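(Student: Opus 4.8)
The plan is to use the winding number hypothesis in the definition of a weak $E$-mapping to count solutions of $G(z,\zeta)=0$, and then to promote this count to a holomorphic dependence on $z$ via an integral formula. First I would observe that for $z=f(\zeta_0)$ with $\zeta_0\in\TT$ one has, on the boundary, $\widetilde f(\zeta)=\zeta\rho(\zeta)\overline{\nu_D(f(\zeta))}$, so that
\[
G(z,\zeta)=\zeta\rho(\zeta)\,\overline{(f(\zeta)-z)\bullet\overline{\nu_D(f(\zeta))}}
=-\zeta\rho(\zeta)\,\overline{\langle z-f(\zeta),\nu_D(f(\zeta))\rangle}=-\zeta\rho(\zeta)\,\overline{\varphi_z(\zeta)}
\]
for $\zeta\in\TT$ (using $a\bullet\bar b=\langle a,b\rangle$ and $\rho>0$). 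Thus for $z\in D$ the function $\zeta\mapsto G(z,\zeta)$ is nonvanishing on $\TT$ (since $\varphi_z$ is, by strong linear convexity), and since $\overline{\varphi_z(\zeta)}$ and $\varphi_z(\zeta)$ have opposite winding numbers while $\zeta\mapsto\zeta$ contributes $+1$, we get $\wind_{\TT}G(z,\cdot)=1-\wind\varphi_z=1$ by hypothesis~(4) (extended to all $z\in D$ by the remark after Definition~\ref{21e}). By Remark~\ref{49}\eqref{51}, $\zeta\mapsto G(z,\zeta)$ has exactly one zero in $\DD$, counted with multiplicity, for each $z\in D$.

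Next I would extend this to a neighborhood $W$ of $\overline D\setminus f(\TT)$. The key point is that $G$ is continuous in $z$ (indeed holomorphic jointly in $(z,\zeta)$), and for $z\in\overline D\setminus f(\TT)$ the zeros of $G(z,\cdot)$ stay off $\TT$: on $\TT$, $G(z,\zeta)=-\zeta\rho(\zeta)\overline{\varphi_z(\zeta)}$ and $\varphi_z(\zeta)=\langle z-f(\zeta),\nu_D(f(\zeta))\rangle\ne 0$ because $z\ne f(\zeta)$ and, by the Remark following the definition of strong linear convexity, $\langle z-f(\zeta),\nu_D(f(\zeta))\rangle\ne 0$ for $z\in\overline D$, $z\ne f(\zeta)$ — here one uses that $z$ lies in the closed domain and $f(\zeta)\in\partial D$, so $z$ is not on the complex tangent hyperplane at $f(\zeta)$ unless $z=f(\zeta)$. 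By compactness of $\TT$ and continuity, $\min_{\zeta\in\TT}|G(z,\zeta)|$ stays bounded below on a neighborhood of each point of $\overline D\setminus f(\TT)$, so by Rouché's theorem the number of zeros (with multiplicity) in $\DD$ is locally constant, hence equal to $1$ on a connected open $W\supset\overline D\setminus f(\TT)$ (one may shrink to the connected component containing $D$; note $\overline D\setminus f(\TT)$ is connected since $f(\TT)\subset\partial D$). Uniqueness of the zero, and the fact that it is simple, follow from the count being exactly $1$.

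Then I would define $F(z)$ to be that unique zero and prove holomorphy by the residue formula
\[
F(z)=\frac{1}{2\pi i}\int_{\TT}\zeta\,\frac{\partial_\zeta G(z,\zeta)}{G(z,\zeta)}\,d\zeta,
\]
which is valid because $G(z,\cdot)$ has a single simple zero at $F(z)$ inside $\DD$ and no zeros on $\TT$; the integrand is holomorphic in $z$ (the denominator is nonvanishing on $\TT$ for $z\in W$) and continuous in $\zeta$, so differentiation under the integral sign gives $F\in\OO(W)$, and $F(W)\subset\DD$ since each such zero lies in $\DD$. Finally, for $z=f(\xi)$ with $\xi\in\DD$ we have $G(f(\xi),\xi)=(f(\xi)-f(\xi))\bullet\widetilde f(\xi)=0$, so $\xi$ is a (the) zero of $G(f(\xi),\cdot)$, giving $F(f(\xi))=\xi$, i.e. $F\circ f=\id_\DD$; and $f(\xi)\in D\subset W$, so this makes sense.

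The main obstacle I anticipate is the regularity bookkeeping needed to justify that $G(z,\cdot)$ is nonvanishing on $\TT$ uniformly near the boundary — i.e. that the identity $G(z,\zeta)=-\zeta\rho(\zeta)\overline{\varphi_z(\zeta)}$ on $\TT$ genuinely transfers the strong-linear-convexity nonvanishing of $\varphi_z$ (valid a priori for $z\in D$) to all $z\in\overline D\setminus f(\TT)$, together with ensuring the Rouché count is constant on a set that actually contains a full neighborhood of $\overline D\setminus f(\TT)$ in $\CC^n$ and not merely of $D$. This is where the Remark asserting $\langle z-a,\nu_D(a)\rangle\ne 0$ for $z\in D$, $a\in\partial D$ must be upgraded to $z\in\overline D$, $z\ne a$, using strict convexity-type behavior of strongly linearly convex domains (the complex tangent hyperplane meets $\overline D$ only at the point of tangency, stated in the first Remark after Definition~\ref{29}).
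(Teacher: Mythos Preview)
Your argument is correct and follows essentially the same route as the paper: factor $G(z,\cdot)$ on $\TT$, read off winding number $1$, conclude a unique simple zero in $\DD$, and obtain holomorphy of $F$ (the paper invokes the Implicit Function Theorem at the simple zero where you use the logarithmic-derivative integral --- both are standard and equivalent here). One computational slip: on $\TT$ one has directly $G(z,\zeta)=\zeta\rho(\zeta)\varphi_z(\zeta)$ with no conjugate, since $(z-f(\zeta))\bullet\overline{\nu_D(f(\zeta))}=\langle z-f(\zeta),\nu_D(f(\zeta))\rangle=\varphi_z(\zeta)$; this does not affect your winding-number count, as $\wind\varphi_z=\wind\overline{\varphi_z}=0$ either way.
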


In the sequel we will strengthen the above proposition for domains with real analytic boundaries (see Proposition~\ref{34}).

\begin{proof}[Proof of Proposition~\ref{1}]
Set $A:=\overline{D}\setminus f(\TT)$. Since $D$ is strongly linearly convex, $\varphi_z$ does not vanish in $\TT$ for any $z\in A$, so by a continuity argument the condition (4) of Definition~\ref{21e} holds for every $z$ in some open set $W\supset A$. For a fixed $z\in W$ we have $$G(z,\zeta)=\zeta\rho(\zeta)\varphi_z(\zeta),\ \zeta\in\TT,$$ so $\wind G(z,\cdotp)=1$. Since $G(z,\cdotp)\in\OO(\DD)$, it has in $\DD$ exactly one simple root $F(z)$. Hence $G(z,F(z))=0$ and $\frac{\partial G}{\partial\zeta}(z,F(z))\neq 0$. By the Implicit Function Theorem, $F$ is holomorphic in $W$. The equality $F(f(\zeta))=\zeta$ for $\zeta\in\DD$ is clear.
\end{proof}

From the proposition above we immediately get the following
\begin{corr}\label{5}
A weak $E$-mapping $f:\DD\longrightarrow D$ of a bounded strongly linearly convex domain $D\su\CC^n$, $n\geq 2$, is a complex geodesic. In particular,
$$c_{D}(f(\zeta),f(\xi))=\wi k_D(f(\zeta),f(\xi))\text{\,\ and\,\, }\gamma_D(f(\zeta);f'(\zeta))=\kappa_D(f(\zeta);f'(\zeta)),$$ for any $\zeta,\xi\in\DD$.
\end{corr}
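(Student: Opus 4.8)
The plan is to deduce Corollary~\ref{5} from Proposition~\ref{1} by a standard argument comparing the Lempert function with the Carath\'eodory distance. First I would recall the general inequalities valid on any domain: $c_D\le \wi k_D$ and $\gamma_D\le\kappa_D$, together with the fact that any holomorphic $F:D\to\DD$ is distance-decreasing for $p$, i.e. $p(F(z),F(w))\le \wi k_D(z,w)$ and analogously for the metrics. Thus it suffices to produce, for the given weak $E$-mapping $f$, a single competitor realizing the reverse inequality, and the map $F:W\to\DD$ from Proposition~\ref{1} with $F\circ f=\id_\DD$ is exactly what is needed.

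Next I would argue as follows. Fix $\zeta,\xi\in\DD$ and put $z:=f(\zeta)$, $w:=f(\xi)$. Since $f\in\OO(\DD,D)$ with $f(\zeta)=z$, $f(\xi)=w$, the definition \eqref{lem1} gives $\wi k_D(z,w)\le p(\zeta,\xi)$. On the other hand $z,w\in f(\DD)\su \ov D\setminus f(\TT)\su W$, and $F\in\OO(W,\DD)$ restricts to a holomorphic map on $D$ (or one can first note $D\su W$), so $c_D(z,w)\ge p(F(z),F(w))=p(\zeta,\xi)$ because $F(f(\zeta))=\zeta$ and $F(f(\xi))=\xi$. Combining with $c_D\le\wi k_D$ yields the chain $p(\zeta,\xi)\le c_D(z,w)\le\wi k_D(z,w)\le p(\zeta,\xi)$, forcing equality throughout; in particular $c_D(f(\zeta),f(\xi))=p(\zeta,\xi)$ for all $\zeta,\xi\in\DD$, which is precisely the assertion that $f$ is a complex geodesic, and it gives $c_D(f(\zeta),f(\xi))=\wi k_D(f(\zeta),f(\xi))$.

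For the infinitesimal statement I would run the same computation with derivatives. Given $\zeta\in\DD$, the map $f$ itself, via \eqref{kob-roy1} with $\lambda=1$, shows $\kappa_D(f(\zeta);f'(\zeta))\le 1/(1-|\zeta|^2)$; conversely, since $F\circ f=\id_\DD$, differentiating gives $F'(f(\zeta))f'(\zeta)=1$, and the Schwarz--Pick property of $F\in\OO(D,\DD)$ (applied after noting $F(f(\zeta))\in\DD$, using the Möbius-normalized form of $\gamma_D$, or simply $\gamma_{\DD}=\kappa_{\DD}$ and functoriality) yields $\gamma_D(f(\zeta);f'(\zeta))\ge |F'(f(\zeta))f'(\zeta)|/(1-|F(f(\zeta))|^2)=1/(1-|\zeta|^2)$. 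Together with $\gamma_D\le\kappa_D$ this pinches everything, giving $\gamma_D(f(\zeta);f'(\zeta))=\kappa_D(f(\zeta);f'(\zeta))=1/(1-|\zeta|^2)$.

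I do not expect a serious obstacle here; the only point requiring a little care is that Proposition~\ref{1} furnishes $F$ on an open set $W\supset\ov D\setminus f(\TT)$ rather than on $D$ itself, but since $D\su \ov D\setminus f(\TT)\su W$ (as $f(\TT)\su\pa D$), the restriction $F|_D$ is an admissible competitor in the definitions of $c_D$ and $\gamma_D$, and that is all one needs. One should also make sure to invoke the version \eqref{lem1}, \eqref{kob-roy1} of the definitions so that the base points $\zeta,\xi$ need not be $0$. The rest is the classical ``left inverse $\Rightarrow$ complex geodesic'' mechanism.
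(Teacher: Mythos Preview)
Your proposal is correct and is exactly the argument the paper has in mind: the text merely says ``From the proposition above we immediately get the following'' after Proposition~\ref{1}, and what you wrote is the standard ``left inverse $\Rightarrow$ complex geodesic'' unpacking of that sentence. The only subtlety you flagged---that $D\su\ov D\setminus f(\TT)\su W$ so $F|_D\in\OO(D,\DD)$ is an admissible competitor---is precisely the point, and you handled it correctly.
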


Using left inverses of weak $E$-mappings we may prove the uniqueness of extremals.
\begin{propp}\label{2}
Let $D\su\CC^n$, $n\geq 2$, be a bounded strongly linearly convex domain and let $f:\DD\longrightarrow D$ be a weak $E$-mapping. Then for any $\xi\in(0,1)$ the mapping $f$ is a unique $\wi{k}_D$-extremal for $z=f(0)$, $w=f(\xi)$ \emph{(}resp. a unique $\kappa_D$-extremal for $z=f(0)$, $v=f'(0)$\emph{)}.
\end{propp}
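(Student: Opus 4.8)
The plan is to derive uniqueness from the existence of the holomorphic left inverse $F$ provided by Proposition~\ref{1}. Recall that $F\colon W\longrightarrow\DD$ is holomorphic on an open set $W\supset\overline D\setminus f(\TT)$ and satisfies $F\circ f=\id_\DD$; in particular $f$ is a complex geodesic (Corollary~\ref{5}), so $p(0,\xi)=c_D(f(0),f(\xi))=\wi k_D(f(0),f(\xi))$ and there is nothing to prove about extremality of $f$ itself. Suppose now $g\colon\DD\longrightarrow D$ is another $\wi k_D$-extremal for $z=f(0)$, $w=f(\xi)$. First I would argue we may arrange that $g(0)=z$ and $g(\xi)=w$ exactly (replace $g$ by $g\circ a$ for a suitable M\"obius automorphism of $\DD$, using the two-point form \eqref{lem1} of $\wi k_D$ and the fact that $\mathrm{Aut}(\DD)$ acts transitively enough on pairs with prescribed Poincar\'e distance). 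So assume $g(0)=z$, $g(\xi)=w$ with $p(0,\xi)=\wi k_D(z,w)$.

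The key step is to look at $h:=F\circ g\colon\DD\longrightarrow\DD$. This is well-defined and holomorphic provided $g(\DD)\subset W$; since $g(\DD)\subset D\subset\overline D\setminus f(\TT)\subset W$, this is automatic. Now $h(0)=F(z)=F(f(0))=0$ and $h(\xi)=F(w)=F(f(\xi))=\xi$. Thus $h\in\OO(\DD,\DD)$ fixes the two points $0$ and $\xi$, with $p(h(0),h(\xi))=p(0,\xi)$. By the Schwarz--Pick lemma, a holomorphic self-map of $\DD$ that preserves the Poincar\'e distance between two distinct points must be an automorphism of $\DD$; since it already fixes $0$ and $\xi$ it must be $\id_\DD$. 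Hence $F\circ g=\id_\DD$, which forces $g$ to be injective and, more importantly, pins down $g$: for each $\zeta$, $g(\zeta)$ is a point of $D$ with $F(g(\zeta))=\zeta$. But $F(f(\zeta))=\zeta$ as well, so both $f(\zeta)$ and $g(\zeta)$ lie in the fiber $F^{-1}(\zeta)$. To conclude $f=g$ I would invoke the uniqueness clause in Proposition~\ref{1}: for $z\in W$, $F(z)$ is characterized as the \emph{unique} solution in $\DD$ of $G(z,\cdot)=0$; I would turn this around to see that, on $f(\DD)$, the map $F$ is a genuine inverse of $f$, i.e.\ $f\circ F=\id$ on $f(\DD)$. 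Concretely: given $\zeta_0\in\DD$, the point $g(\zeta_0)$ satisfies $G(g(\zeta_0),\zeta_0)=0$ (this needs to be checked — see below), and $G(g(\zeta_0),\cdot)$ has a unique zero in $\DD$; since $G(g(\zeta_0),F(g(\zeta_0)))=0$ with $F(g(\zeta_0))=\zeta_0$, and likewise $G(f(\zeta_0),\zeta_0)=0$, comparing we need $g(\zeta_0)=f(\zeta_0)$.

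The main obstacle — and the step that needs genuine care rather than a one-line Schwarz--Pick invocation — is showing $g=f$ once we know $F\circ g=\id_\DD$. The equation $F\circ g=\id$ says $g$ maps $\DD$ bijectively onto a subset of the fiber structure of $F$, but a priori $F$ need not be injective on all of $W$, so "$F(f(\zeta))=\zeta=F(g(\zeta))$" does not by itself give $f(\zeta)=g(\zeta)$. The right way around this is to use that $f$ is a complex geodesic together with uniqueness of $\wi k_D$-extremals on the disc: since $g$ is a $\wi k_D$-extremal and $F\circ g=\id_\DD$ is the identity (an isometry for $p$), the mapping $g$ is itself a complex geodesic, and the pair $(g,F)$ exhibits $g(\DD)$ as a retract; one then shows that the analytic disc $g$ and the analytic disc $f$ have the same left inverse $F$ and pass through the same point $z=f(0)=g(0)$ with, after the initial normalization, $g'(0)$ proportional to $f'(0)$ (this last point uses that $F\circ g=\id$ forces $F'(z)\,g'(0)=1=F'(z)\,f'(0)$, hence $F'(z)(g'(0)-f'(0))=0$). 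To upgrade this to $g'(0)=f'(0)$ and then $g=f$ I would differentiate $G(g(\zeta),\zeta)\equiv 0$ — which holds because $g(\zeta)\in D$ has $F(g(\zeta))=\zeta$ and $G(w,\cdot)$ vanishes exactly at $F(w)$ — to get, along the disc, a first-order ODE uniquely determined by the initial data $g(0)=z$, whose solution is $f$; alternatively, and more cleanly, apply the already-established equality $F\circ g=\id$ to conclude $g=f\circ(F\circ f)^{-1}\circ\cdots$, ultimately reducing the $\kappa_D$-case to the $\wi k_D$-case by the standard limiting argument $w\to z$ along $f$. I expect writing this reduction carefully to be where the real work lies; everything else is Schwarz--Pick and bookkeeping.
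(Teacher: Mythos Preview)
Your proposal correctly reaches $F\circ g=\id_\DD$ via Schwarz--Pick, which is exactly what the paper does. The gap is in the step after that: you recognize that ``$F(f(\zeta))=\zeta=F(g(\zeta))$ does not by itself give $f(\zeta)=g(\zeta)$'' because $F$ need not be injective on $W$, but none of the remedies you sketch actually closes this gap. The ODE idea cannot work as stated: $G(g(\zeta),\zeta)=0$ is a \emph{single} scalar equation for the $\CC^n$-valued unknown $g(\zeta)$, so differentiating it does not yield a determined first-order system with unique solution through $z$. The ``$F'(z)(g'(0)-f'(0))=0$'' observation only says $g'(0)-f'(0)$ lies in the $(n-1)$-dimensional kernel of the linear functional $F'(z)$, which again is far from $g'(0)=f'(0)$. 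And there is no reduction of the $\wi k_D$-case to the $\kappa_D$-case available here.

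The paper's device is boundary-based and uses the geometry of $D$ rather than any interior rigidity of $F$. From $F\circ g=\id_\DD$ one has $G(g(\zeta),\zeta)=(g(\zeta)-f(\zeta))\bullet\widetilde f(\zeta)=0$ for all $\zeta\in\DD$. Now let $\zeta_0\in\TT$ and let $Z$ be any subsequential limit of $g(\zeta_m)$ with $\zeta_m\to\zeta_0$; passing to the limit gives $(Z-f(\zeta_0))\bullet\widetilde f(\zeta_0)=0$, i.e.\ $\langle Z-f(\zeta_0),\nu_D(f(\zeta_0))\rangle=0$, so $Z$ lies on the complex tangent hyperplane to $\partial D$ at $f(\zeta_0)$. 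Strong linear convexity forces $\overline D\cap(f(\zeta_0)+T_D^\CC(f(\zeta_0)))=\{f(\zeta_0)\}$, hence $Z=f(\zeta_0)$. Thus $g$ extends continuously to $\CDD$ with $g|_\TT=f|_\TT$, and the maximum principle applied componentwise to $g-f$ gives $g\equiv f$. This is the missing idea in your write-up: the fibers of $F$ over boundary points are singletons by strong linear convexity, and that is enough to pin down $g$ via its boundary values.
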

\begin{proof}

Suppose that $g$ is a $\wi{k}_D$-extremal for $z,w$ (resp. a $\kappa_D$-extremal for $z,v$) such that $g(0)=z$, $g(\xi)=w$ (resp. $g(0)=z$, $g'(0)=v$). Our aim is to show that $f=g$. Proposition~\ref{1} provides us with the mapping $F$, which is a left inverse for $f$. By the Schwarz Lemma, $F$ is a left inverse for $g$, as well, that is $F\circ g=\text{id}_{\DD}$. We claim that $\lim_{\DD\ni\zeta\to\zeta_0}g(\zeta)=f(\zeta_0)$ for any $\zeta_0\in\TT$ (in particular, we shall show that the limit does exist).

Assume the contrary. Then there are $\zeta_0\in\TT$ and a sequence $\{\zeta_m\}\subset\DD$ convergent to $\zeta_0$ such that the limit $Z:=\lim_{m\to\infty}g(\zeta_m)\in\overline{D}$ exists and is not equal to $f(\zeta_0)$. We have $G(z,F(z))=0$, so putting $z=g(\zeta_m)$ we infer that $$0=(g(\zeta_m)-f(F(g(\zeta_m))))\bullet \widetilde{f}(F(g(\zeta_m)))=(g(\zeta_m)-f(\zeta_m))\bullet\widetilde{f}(\zeta_m).
$$ Passing with $m$ to the infinity we get $$0=(Z-f(\zeta_0))\bullet \widetilde{f}(\zeta_0)=\zeta_0\rho(\zeta_0)\langle Z-f(\zeta_0),\nu_D(f(\zeta_0))\rangle.$$ This means that $Z-f(\zeta_0)\in T^{\CC}_D(f(\zeta_0))$. Since $D$ is strongly linearly convex, we deduce that $Z=f(\zeta_0)$, which is a contradiction.

Hence $g$ extends continuously on $\CDD$ and, by the maximum principle, $g=f$.
\end{proof}

\begin{propp}\label{3}
Let $D\su\CC^n$, $n\geq 2$, be a bounded strongly linearly convex domain, let $f:\DD\longrightarrow D$ be a weak $E$-mapping and let $a$ be an automorphism of $\DD$. Then $f\circ a$ is a weak $E$-mapping of $D$.
\end{propp}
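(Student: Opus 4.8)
The plan is to verify the three defining conditions (1')--(3') of a weak stationary mapping for $f\circ a$, together with condition (4), by transporting the data $(\wi f,\rho)$ of $f$ along the automorphism $a$. Write $a(\zeta)=e^{i\theta}\frac{\zeta-\alpha}{1-\ov\alpha\zeta}$ for some $\alpha\in\DD$, $\theta\in\RR$; recall that $a$ is a biholomorphism of a neighborhood of $\CDD$ onto a (possibly different) neighborhood of $\CDD$ mapping $\TT$ to $\TT$, so (1') and (2') for $f\circ a$ are immediate from the corresponding properties of $f$: the composition $f\circ a$ is holomorphic on $\DD$, extends $\cC^{1/2}$-smoothly to $\CDD$, and maps $\TT$ into $\pa D$.

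The heart of the matter is condition (3'): I must produce a positive $\cC^{1/2}$-smooth function $\rho_a:\TT\longrightarrow\RR_{>0}$ such that $\zeta\longmapsto\zeta\rho_a(\zeta)\ov{\nu_D(f(a(\zeta)))}$ extends holomorphically to $\DD$ with $\cC^{1/2}(\CDD)$ boundary values. Starting from the given extension $\wi f(\eta)$ of $\eta\longmapsto\eta\rho(\eta)\ov{\nu_D(f(\eta))}$, substitute $\eta=a(\zeta)$: the map $\zeta\longmapsto\wi f(a(\zeta))$ is holomorphic near $\CDD$ and on $\TT$ equals $a(\zeta)\rho(a(\zeta))\ov{\nu_D(f(a(\zeta)))}$. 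To convert the factor $a(\zeta)$ into $\zeta$ I multiply by $\zeta/a(\zeta)=e^{-i\theta}\frac{1-\ov\alpha\zeta}{\zeta-\alpha}$; the difficulty is that this rational factor has a pole at $\zeta=\alpha\in\DD$. The standard fix is to multiply instead by a Blaschke-type factor that has the same boundary modulus: on $\TT$ one has $\left|\frac{1-\ov\alpha\zeta}{\zeta-\alpha}\right| = \frac{1}{|\zeta|}\cdot\frac{|1-\ov\alpha\zeta|}{|1-\ov\alpha\zeta|}\cdot|\zeta|$ — more precisely $\left|\zeta-\alpha\right|=|1-\ov\alpha\zeta|$ for $\zeta\in\TT$ — so $|a(\zeta)|=1$ and the factor $\zeta/a(\zeta)$ has unit modulus on $\TT$ but is singular inside. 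Define
$$\wi{f_a}(\zeta):=\frac{(1-\ov\alpha\zeta)^2}{e^{i\theta}(1-|\alpha|^2)}\,\wi f(a(\zeta)),\qquad \rho_a(\zeta):=\frac{|1-\ov\alpha\zeta|^2}{1-|\alpha|^2}\,\rho(a(\zeta)).$$
Then $\wi{f_a}$ is holomorphic near $\CDD$ (no poles, since $\wi f\circ a$ is holomorphic and $(1-\ov\alpha\zeta)^2$ is entire), is $\cC^{1/2}$ on $\CDD$, and a short computation on $\TT$ — using $a'(\zeta)=e^{i\theta}\frac{1-|\alpha|^2}{(1-\ov\alpha\zeta)^2}$ and $\ov{\zeta-\alpha}=\ov\zeta(1-\ov\alpha\zeta)\cdot\frac{\ov\zeta\,\cdot}{}$, or directly $|1-\ov\alpha\zeta|^2 = \frac{1-|\alpha|^2}{a'(\zeta)}\cdot e^{i\theta}\cdot(\text{unit factor})$ — shows $\wi{f_a}(\zeta)=\zeta\rho_a(\zeta)\ov{\nu_D(f(a(\zeta)))}$ for $\zeta\in\TT$, with $\rho_a>0$. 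This is exactly (3') for $f\circ a$, and also shows $\DD_{f\circ a}\supseteq a^{-1}(\DD_f)$ is a genuine neighborhood of $\CDD$.

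Finally, for condition (4) I note that $\varphi^{f\circ a}_z(\zeta)=\langle z-f(a(\zeta)),\nu_D(f(a(\zeta)))\rangle=\varphi^f_z(a(\zeta))$, so $\varphi^{f\circ a}_z=\varphi^f_z\circ a$ as maps $\TT\to\CC_*$; since $a$ restricted to $\TT$ is an orientation-preserving homeomorphism of $\TT$ (degree $1$), homotopy invariance of the winding number gives $\wind\varphi^{f\circ a}_z=\wind\varphi^f_z=0$ for the same $z\in D$. Hence $f\circ a$ satisfies all of (1')--(4) and is a weak $E$-mapping. The only genuinely delicate point is the bookkeeping in the second paragraph — choosing the correct holomorphic ``Blaschke'' correction factor so that the pole of $\zeta/a(\zeta)$ is cancelled while the boundary identity $\wi{f_a}=\zeta\rho_a\ov{\nu_D\circ f\circ a}$ is preserved with $\rho_a$ real and positive; everything else is routine. (In the real analytic boundary case one replaces $\cC^{1/2}$ by $\cC^\omega$ throughout and the same computation gives that $f\circ a$ is a stationary, hence $E$-, mapping.)
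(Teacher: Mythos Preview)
Your argument is correct. The verification of (1'), (2'), and (4) matches the paper's exactly, but for (3') you take a different and more explicit route. The paper observes that $\zeta/a(\zeta)$ has winding number $0$ on $\TT$, writes it as $e^{iv}$ for a real $v$, solves $\im h=v$ for an $h\in\OO(\CDD)$, and then checks that $e^{h(\zeta)}\wi f(a(\zeta))$ does the job with $\rho_a=\rho\circ a\cdot e^{\re h}$. You instead write down the correction factor directly: your factor $\dfrac{(1-\ov\alpha\zeta)^2}{e^{i\theta}(1-|\alpha|^2)}$ is nothing other than $1/a'(\zeta)$, so your formula reads $\wi{f_a}=\dfrac{\wi f\circ a}{a'}$ and $\rho_a=\dfrac{\rho\circ a}{|a'|}$; the identity on $\TT$ then follows from $(1-\ov\alpha\zeta)(\zeta-\alpha)=\zeta|1-\ov\alpha\zeta|^2$ for $|\zeta|=1$, equivalently from the fact that $\zeta a'(\zeta)/a(\zeta)=|a'(\zeta)|>0$ on $\TT$. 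This is more elementary --- no logarithms or harmonic conjugates --- and makes the dependence of $(\wi{f_a},\rho_a)$ on $(\wi f,\rho)$ transparent. The paper's argument, by contrast, never uses the explicit parametrisation of $a$ and would apply verbatim to any holomorphic self-map of $\DD$ whose boundary restriction is a degree-$1$ map $\TT\to\TT$; that extra generality is not needed here.

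Two small remarks. First, in your write-up the line ``using $a'(\zeta)=\ldots$ and $\ov{\zeta-\alpha}=\ov\zeta(1-\ov\alpha\zeta)\cdot\frac{\ov\zeta\,\cdot}{}$'' trails off; the clean computation is simply $\zeta-\alpha=\zeta\,\overline{(1-\ov\alpha\zeta)}$ for $\zeta\in\TT$, whence $(1-\ov\alpha\zeta)(\zeta-\alpha)=\zeta|1-\ov\alpha\zeta|^2$, and the boundary identity follows. Second, your remark that $\wi{f_a}$ is ``holomorphic near $\CDD$'' overshoots in the weak case: $\wi f$ is only assumed in $\OO(\DD)\cap\cC^{1/2}(\CDD)$, so $\wi{f_a}$ is too, which is all that (3') requires.
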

\begin{proof}
Set $g:=f\circ a$.

Clearly, the conditions (1') and (2') of Definition~\ref{21} are satisfied by $g$.

To prove that $g$ satisfies the condition (4) of Definition~\ref{21e} fix a point $z\in D$. Let $\varphi_{z,f}$, $\varphi_{z,g}$ be the functions appearing in the condition (4) for $f$ and $g$ respectively. Then $\varphi_{z,g}=\varphi_{z,f}\circ a$. Since $a$ maps $\TT$ to $\TT$ diffeomorphically, we have $\wind\varphi_{z,g}=\pm\wind\varphi_{z,f}=0$.

It remains to show that the condition (3') of Definition~\ref{21} is also satisfied by $g$. Note that the function $\wi a(\zeta):=\zeta/a(\zeta)$ has a holomorphic branch of the logarithm in the neighborhood of $\TT$. This follows from the fact that $\wind \wi a=0$, however the existence of the holomorphic branch may be shown quite elementary. Actually, it would suffices to prove that $\wi a(\TT)\neq\TT$. Expand $a$ as $$a(\zeta)=e^{it}\frac{\zeta-b}{1-\overline b\zeta}$$ with some $t\in\RR$, $b\in\DD$ and observe that $\widetilde a$ does not attain the value $-e^{-it}$. Indeed, if $\zeta/a(\zeta)=-e^{-it}$ for some $\zeta\in\TT$, then $$\frac{1-\overline b\zeta}{1-b\overline\zeta}=-1,$$ so $2=2\re(b\overline\zeta)\leq 2|b|$, which is impossible.

Concluding, there exists a function $v$ holomorphic in a neighborhood of $\TT$ such that $$\frac{\zeta}{a(\zeta)}=e^{i v(\zeta)}.$$ Note that $v(\TT)\su\RR$. Expanding $v$ in Laurent series $$v(\zeta)=\sum_{k=-\infty}^{\infty}a_k\zeta^k,\ \zeta\text{ near }\TT,$$ we infer that $a_{-k}=\overline a_k$, $k\in\ZZ$. Therefore, $$v(\zeta)=a_0+\sum_{k=1}^\infty 2\re(a_k\zeta^k)=\re\left(a_0+2\sum_{k=1}^\infty a_k\zeta^k\right),\ \zeta\in\TT.$$ Hence, there is a function $h$ holomorphic in the neighborhood of $\CDD$ such that $v=\im h$. Put $u:=h-iv$. Then $u\in\OO(\TT)$ and $u(\TT)\su\RR$.

Take $\rho$ be as in the condition (3') of Definition~\ref{21} for $f$ and define $$r(\zeta):=\rho(a(\zeta))e^{u(\zeta)},\ \zeta\in\TT.$$ Let us compute
\begin{eqnarray*}\zeta r(\zeta)\overline{\nu_D(g(\zeta))}=\zeta u^{u(\zeta)}\rho(a(\zeta))\overline{\nu_D(f(a(\zeta)))}&=&\\=a(\zeta)h(\zeta)\rho(a(\zeta))\overline{\nu_D(f(a(\zeta)))}
&=&h(\zeta)\widetilde{f}(a(\zeta)),\quad\zeta\in\TT.
\end{eqnarray*} Thus $\zeta\longmapsto\zeta r(\zeta)\overline{\nu_D(g(\zeta))}$ extends holomorphically to a function of class $\OO(\DD)\cap\cC^{1/2}(\CDD)$.
\end{proof}

\begin{corr}\label{28}
A weak $E$-mapping $f:\DD\longrightarrow D$ of a bounded strongly linearly convex domain $D\su\CC^n$, $n\geq 2$, is a unique $\wi{k}_D$-extremal for $f(\zeta),f(\xi)$ \emph{(}resp. a unique $\kappa_D$-extremal for $f(\zeta),f'(\zeta)$\emph{)}, where $\zeta,\xi\in\DD$, $\zeta\neq\xi$.
\end{corr}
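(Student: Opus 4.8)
The plan is to reduce the statement to Proposition~\ref{2}, which already settles the special case of the base point $0$ (and the special normalization $w=f(\xi)$, $\xi\in(0,1)$, resp. $v=f'(0)$), by pre-composing $f$ with a suitable automorphism of $\DD$ and then invoking Proposition~\ref{3}. First I would fix $\zeta,\xi\in\DD$ with $\zeta\neq\xi$ and produce an automorphism $a$ of $\DD$ satisfying $a(0)=\zeta$ and $a(s)=\xi$ for some $s\in(0,1)$. Such an $a$ is constructed in the usual way: choose any automorphism $b$ of $\DD$ with $b(0)=\zeta$, write $b^{-1}(\xi)=s e^{i\theta}$ with $s\in[0,1)$ and $\theta\in\RR$ (here $s>0$ because $\zeta\neq\xi$), and set $a:=b\circ R_\theta$ with $R_\theta(w):=e^{i\theta}w$. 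Then $a(0)=b(0)=\zeta$ and $a(s)=b(e^{i\theta}s)=b(b^{-1}(\xi))=\xi$, as desired.

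Next I would put $g:=f\circ a$. By Proposition~\ref{3}, $g$ is again a weak $E$-mapping of $D$, and by construction $g(0)=f(\zeta)$ and $g(s)=f(\xi)$ with $s\in(0,1)$, while $g'(0)=a'(0)f'(\zeta)$. Applying Proposition~\ref{2} to $g$ (with the parameter $\xi$ there replaced by $s$) shows that $g$ is the unique $\wi k_D$-extremal for the pair $g(0)=f(\zeta)$, $g(s)=f(\xi)$; in the metric case it shows that $g$ is the unique $\kappa_D$-extremal for $g(0)=f(\zeta)$, $g'(0)=a'(0)f'(\zeta)$. Note also that $f$ itself is genuinely an extremal for each such pair, since it is a complex geodesic by Corollary~\ref{5}.

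Finally I would transfer the uniqueness back to $f$ by a short bookkeeping of M\"obius maps. For the $\wi k_D$-statement, any $\wi k_D$-extremal $h$ for $f(\zeta),f(\xi)$ satisfies, by the uniqueness of $g$, an identity $h=g\circ c$ for some M\"obius $c$; hence $h=f\circ(a\circ c)$ with $a\circ c$ again M\"obius, which is precisely the assertion that $f$ is the unique $\wi k_D$-extremal for $f(\zeta),f(\xi)$. For the $\kappa_D$-statement I would additionally record the elementary fact, immediate from \eqref{kob-roy1} and the homogeneity $\kappa_D(z;\lambda v)=|\lambda|\kappa_D(z;v)$, that for $\lambda\in\CC_*$ a mapping is a $\kappa_D$-extremal for $(z,v)$ if and only if it is one for $(z,\lambda v)$; taking $\lambda=a'(0)$ identifies the $\kappa_D$-extremals for $f(\zeta),a'(0)f'(\zeta)$ with those for $f(\zeta),f'(\zeta)$, and the same M\"obius computation then yields uniqueness of $f$ as a $\kappa_D$-extremal for $f(\zeta),f'(\zeta)$. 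None of these steps presents a genuine obstacle: the entire analytic content already resides in Propositions~\ref{2} and~\ref{3}, and the only points requiring a little care are the existence of the normalizing automorphism $a$ matching both $a(0)=\zeta$ and the Poincar\'e-distance condition $a(s)=\xi$, together with the observation that the notion of a $\kappa_D$-extremal is insensitive to rescaling the direction vector.
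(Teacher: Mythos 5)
Your argument is correct and is precisely the route the paper intends: Corollary~\ref{28} is stated without proof because it follows immediately from Proposition~\ref{2} (uniqueness at the normalized configuration) combined with Proposition~\ref{3} (stability of weak $E$-mappings under pre-composition with automorphisms of $\DD$), which is exactly the reduction you carry out. The details you add --- the construction of the automorphism $a$ with $a(0)=\zeta$, $a(s)=\xi$, the M\"obius bookkeeping, and the invariance of $\kappa_D$-extremality under rescaling of the direction vector --- are the routine steps the paper leaves implicit.
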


\subsection{Generalization of Proposition~\ref{1}}
The results obtained in this subsection will play an important role in the sequel.

We start with 
\begin{propp}\label{4}
Let $f:\DD\longrightarrow D$ be an $E$-mapping. Then the function $f'\bullet\widetilde{f}$ is a positive constant.
\end{propp}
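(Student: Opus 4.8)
The plan is to exploit the defining property (3) of a stationary mapping together with the holomorphy of $f$ and $\widetilde f$ on $\DD_f$. Recall that $\widetilde f$ is the holomorphic extension of $\zeta\mapsto\zeta\rho(\zeta)\overline{\nu_D(f(\zeta))}$ from $\TT$. Differentiating $f$ and pairing with $\widetilde f$ via the bilinear form $\bullet$, I would first analyze the function $g(\zeta):=f'(\zeta)\bullet\widetilde f(\zeta)$, which is holomorphic on $\DD_f$, and show that on $\TT$ it is real-valued; then a reflection-type argument (or the observation that a holomorphic function real on $\TT$ extends to the Riemann sphere and hence is constant, being bounded) forces it to be a real constant. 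Positivity will come from a separate computation at a boundary point.

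The key step is the reality of $g$ on $\TT$. For $\zeta=e^{it}\in\TT$, write $\zeta=e^{it}$ and note $f'(e^{it})=-ie^{-it}\frac{d}{dt}\left(f(e^{it})\right)$, so that
\[
g(e^{it})=f'(e^{it})\bullet\widetilde f(e^{it})
=\left(-ie^{-it}\tfrac{d}{dt}f(e^{it})\right)\bullet\left(e^{it}\rho(e^{it})\overline{\nu_D(f(e^{it}))}\right)
=-i\,\rho(e^{it})\,\left\langle \tfrac{d}{dt}f(e^{it}),\,\nu_D(f(e^{it}))\right\rangle.
\]
Since $f(e^{it})\in\pa D$ for all $t$, the curve $t\mapsto f(e^{it})$ stays in $\pa D$, so its velocity $\frac{d}{dt}f(e^{it})$ lies in the real tangent space $T_D^{\RR}(f(e^{it}))$; equivalently $\re\big\langle \frac{d}{dt}f(e^{it}),\nu_D(f(e^{it}))\big\rangle=0$, i.e. $\big\langle \frac{d}{dt}f(e^{it}),\nu_D(f(e^{it}))\big\rangle$ is purely imaginary. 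Multiplying a purely imaginary number by $-i\rho(e^{it})$ (with $\rho>0$) yields a real number, so $g(\TT)\su\RR$. As $g\in\OO(\DD_f)$ is real on $\TT$, the Schwarz reflection principle across $\TT$ shows $g$ extends holomorphically past $\TT$ and, being holomorphic and bounded on all of $\CC$ (via $\zeta\mapsto g(1/\bar\zeta)=\overline{g(\zeta)}$ continued), is constant; alternatively, a nonconstant holomorphic function cannot map $\TT$ into $\RR$ by the open mapping theorem. Denote the constant by $c\in\RR$.

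It remains to check $c>0$. Here I would use the strong linear convexity: evaluate $c$ at a convenient point, say by considering the Taylor expansion of $\varphi_z(\zeta)=\langle z-f(\zeta),\nu_D(f(\zeta))\rangle$ near a point $\zeta_0\in\TT$ with $z=f(\zeta_0)$, or more directly note that from $G(z,\zeta)=(z-f(\zeta))\bullet\widetilde f(\zeta)$ one has $\frac{\pa G}{\pa\zeta}(f(\zeta_0),\zeta_0)=-f'(\zeta_0)\bullet\widetilde f(\zeta_0)+(f(\zeta_0)-f(\zeta_0))\bullet\widetilde f{}'(\zeta_0)=-c$, and in the proof of Proposition~\ref{1} it was shown that $\zeta_0$ is a simple root of $G(f(\zeta_0),\cdot)$, forcing $c\neq0$. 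To pin down the sign, observe that for $z\in D$ the winding number $\wind\varphi_z=0$ and $\wind(\zeta\rho(\zeta))=1$, so $\wind G(z,\cdot)=1$ with $G(z,\cdot)$ having a single zero inside $\DD$; letting $z=f(\xi)$ for $\xi\in\DD$ close to $\zeta_0$ and comparing with the local behavior $G(f(\xi),\zeta)\approx -c(\zeta-\xi)$ shows that the orientation forcing exactly one zero in $\DD$ is consistent only with $c>0$ (a sign-of-the-residue computation). The main obstacle is precisely this sign determination — making the orientation/winding bookkeeping rigorous — but it reduces to the already-established fact that $F$ from Proposition~\ref{1} is a genuine left inverse $F\circ f=\id_\DD$ mapping into $\DD$, whose derivative relation $F'(f(\zeta))\,f'(\zeta)=1$ combined with $G(F(z),z)\equiv 0$ pins the sign of $c=f'\bullet\widetilde f$ to be positive.
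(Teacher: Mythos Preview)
Your argument that $g=f'\bullet\widetilde f$ is real on $\TT$ and hence constant is essentially the paper's own computation, and your route to $c\neq 0$ via Proposition~\ref{1} (the unique zero of $G(f(\zeta_0),\cdot)$ at $\zeta_0\in\DD$ is simple, so $\partial_\zeta G(f(\zeta_0),\zeta_0)=-c\neq 0$) is a clean alternative to the paper's winding-number computation for $g(\zeta)=\frac{f(\zeta)-f(0)}{\zeta}\bullet\widetilde f(\zeta)$. One small caveat: you introduce $\zeta_0\in\TT$ in the first sentence of the positivity paragraph, but Proposition~\ref{1} only covers $z\in\overline D\setminus f(\TT)$; you should take $\zeta_0\in\DD$ from the outset there.

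The genuine gap is the sign of $c$. Neither of your proposed mechanisms determines it. The local model $G(f(\xi),\zeta)\approx -c(\zeta-\xi)$ has a single simple zero in $\DD$ and winding number $1$ on $\TT$ regardless of the sign of the nonzero constant $c$; orientation of the zero set carries no information here. Likewise, differentiating $G(z,F(z))=0$ along $z=f(\xi)$ gives $F'(f(\xi))=\widetilde f(\xi)/c$ and then $(F\circ f)'(\xi)=\widetilde f(\xi)\bullet f'(\xi)/c=c/c=1$, a tautology that holds for either sign of $c$. So the chain-rule identity $F'(f(\zeta))f'(\zeta)=1$ cannot ``pin the sign.''

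The paper closes this gap with a one-line geometric observation you are missing: consider the radial curve $t\mapsto f(t)$ on $[0,1+\eps)$; since $f([0,1))\subset D$ and $f(1)\in\partial D$, the defining function $r\circ f$ has nonnegative derivative at $t=1$, i.e. $\re\langle f'(1),\nu_D(f(1))\rangle\ge 0$. Rewriting via $\widetilde f(1)=\rho(1)\overline{\nu_D(f(1))}$ gives $c/\rho(1)\ge 0$, hence $c\ge 0$; combined with $c\neq 0$ (which you already have) this yields $c>0$.
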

\begin{proof}
Consider the curve $$\RR\ni t\longmapsto f(e^{it})\in\partial D.$$ Its any tangent vector $ie^{it}f'(e^{it})$ belongs to $T_{D}^\mathbb{R}(f(e^{it}))$, i.e. $$\re\langle ie^{it}f'(e^{it}),\nu_D(f(e^{it}))\rangle=0.$$ Thus for $\zeta\in\TT$ $$0=\rho(\zeta)\re\langle i\zeta f'(\zeta),\nu_D(f(\zeta))\rangle=-\im f'(\zeta)\bullet\widetilde{f}(\zeta),$$ so the holomorphic function $f'\bullet\widetilde{f}$ is a real constant $C$.

Considering the curve $$[0,1+\eps)\ni t\longmapsto f(t)\in\overline D$$ for small $\eps>0$ and noting that $f([0,1))\su D$, $f(1)\in\partial D$, we see that the derivative of $r\circ f$ at a point $t=1$ is non-negative, where $r$ is a defining function of $D$. Hence $$0\leq\re\langle f'(1),\nu_D(f(1))\rangle =\frac{1}{\rho(1)} \re( f'(1)\bullet\widetilde{f}(1))=
\frac{C}{\rho(1)},$$ i.e. $C\geq 0$. For $\zeta\in\TT$
$$\frac{f(\zeta)-f(0)}{\zeta}\bullet\widetilde{f}(\zeta)=\rho(\zeta)\langle f (\zeta)-f(0),\nu_D(f(\zeta))\rangle.$$ This function has the winding number equal to $0$. Therefore, the function $$g(\zeta):=\frac{f(\zeta)-f(0)}{\zeta}\bullet\widetilde{f}(\zeta),$$ which is holomorphic in a neighborhood of $\CDD$, does not vanish
in $\DD$. In particular, $C=g(0)\neq 0$.
\end{proof}
The function $\rho$ is defined up to a constant factor. \textbf{We choose $\rho$ so that $ f'\bullet\widetilde{f}\equiv 1$}, i.e. \begin{equation}\label{rho}\rho(\zeta)^{-1}=\langle\zeta f'(\zeta),\nu_D(f(\zeta))\rangle,\ \zeta\in\TT.\end{equation} In that way $\widetilde{f}$ and $\rho$ are uniquely determined by $f$.

\begin{propp}
An $E$-mapping $f:\DD\longrightarrow D$ is injective in $\CDD$.
\end{propp}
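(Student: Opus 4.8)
The claim is that an $E$-mapping $f:\DD\longrightarrow D$ is injective on $\CDD$. The natural strategy is to exploit the left inverse $F$ furnished by Proposition~\ref{1}, together with the structure of the function $G(z,\zeta)=(z-f(\zeta))\bullet\wi f(\zeta)$ and the boundary identity $G(z,\zeta)=\zeta\rho(\zeta)\varphi_z(\zeta)$ valid for $\zeta\in\TT$. Since $F\circ f=\id_\DD$, injectivity of $f$ on $\DD$ is immediate; the entire content is injectivity up to the boundary, i.e. (a) $f$ is injective on $\TT$, and (b) $f(\TT)\cap f(\DD)=\emptyset$.

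First I would dispose of case (b). Suppose $f(\zeta_0)=f(\zeta_1)$ with $\zeta_0\in\TT$ and $\zeta_1\in\DD$. Apply $F$ (which is defined and holomorphic on a neighborhood $W$ of $\ov D\setminus f(\TT)$) — but $f(\zeta_1)\in f(\DD)\su D$, and if $f(\zeta_1)\notin f(\TT)$ we get $\zeta_1=F(f(\zeta_1))=F(f(\zeta_0))$; the issue is that $f(\zeta_0)\in f(\TT)$, so $F$ need not be defined there. Instead I would argue directly from $G$: for $z:=f(\zeta_1)\in D$ the function $G(z,\cdot)$ is holomorphic near $\CDD$, has winding number $1$ on $\TT$ (as in the proof of Proposition~\ref{1}), hence exactly one simple zero in $\DD$, namely $\zeta_1$ (since $G(z,\zeta_1)=(f(\zeta_1)-f(\zeta_1))\bullet\wi f(\zeta_1)=0$). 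But $G(z,\zeta_0)=(f(\zeta_1)-f(\zeta_0))\bullet\wi f(\zeta_0)=0$ as well, so $\zeta_0$ is also a zero of $G(z,\cdot)$ — a zero on $\TT$. That already forces a contradiction, because $\varphi_z$ does not vanish on $\TT$ for $z\in D$ by strong linear convexity, so $G(z,\cdot)=\zeta\rho(\zeta)\varphi_z(\zeta)\neq0$ on $\TT$. This settles (b), and in fact shows $f(\TT)\cap f(\DD)=\emptyset$ cleanly.

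For (a), suppose $f(\zeta_0)=f(\zeta_1)=:b\in\pa D$ with $\zeta_0\neq\zeta_1$, both in $\TT$. The key tool is the normalization $f'\bullet\wi f\equiv1$ from \eqref{rho} together with Proposition~\ref{4}. Consider the holomorphic function $\zeta\longmapsto g(\zeta):=(f(\zeta)-b)\bullet\wi f(\zeta_0)$ — or more symmetrically work with $G(b,\zeta)$, except $b\in\pa D$ so $G(b,\cdot)$ need not have winding number $1$. The cleaner route: pick $z\in D$ on the inward normal line through $b$, say $z=b-t\,\nu_D(b)$ for small $t>0$, so that $\varphi_z(\zeta_0)=\langle z-b,\nu_D(b)\rangle=-t<0$ and likewise $\varphi_z(\zeta_1)=-t$. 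Then $G(z,\zeta_j)=\zeta_j\rho(\zeta_j)\varphi_z(\zeta_j)\neq0$, so that doesn't vanish. Instead, I would take the approach of differentiating: since $f(\zeta_0)=f(\zeta_1)$, consider the analytic disc and use that $\varphi_z$ for generic $z$ near $b$ has a zero of $G(z,\cdot)$ near $\zeta_0$ and another near $\zeta_1$ (by Hurwitz/Rouché, perturbing $b$ slightly into $D$ along a direction making $G(z,\cdot)$ vanish near each $\zeta_j$), contradicting that $G(z,\cdot)$ has exactly one zero in $\DD$ for $z\in W\cap D$. Concretely: choose $z_m\to b$, $z_m\in D$, with $G(z_m,\cdot)$ having winding number $1$; each $G(z_m,\cdot)$ has a unique zero $\zeta(z_m)\in\DD$, and $\zeta(z_m)\to$ a zero of $G(b,\cdot)$ on $\CDD$; but I can choose the approach direction of $z_m$ so that the zero is forced near $\zeta_0$ for one choice and near $\zeta_1$ for another, while uniqueness pins it down — yielding $\zeta_0=\zeta_1$.

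\textbf{Main obstacle.} The delicate point is case (a): establishing injectivity on $\TT$, because the left inverse $F$ is not available on $f(\TT)$ and $G(b,\cdot)$ for $b\in\pa D$ loses the winding-number-$1$ property (indeed $G(b,\zeta_0)=0$, so $b\notin W$). The right way is probably to show that for $b=f(\zeta_0)$, the function $\zeta\longmapsto G(b,\zeta)/(\zeta-\zeta_0)$, extended holomorphically, still controls the zero structure: one shows $G(b,\cdot)$ has a \emph{simple} zero at $\zeta_0$ and no other zero on $\CDD$. Simplicity at $\zeta_0$ follows from $\frac{\pa G}{\pa\zeta}(b,\zeta_0)=-f'(\zeta_0)\bullet\wi f(\zeta_0)+ (b-f(\zeta_0))\bullet\wi f'(\zeta_0)=-f'(\zeta_0)\bullet\wi f(\zeta_0)=-\rho(\zeta_0)^{-1}\langle\zeta_0 f'(\zeta_0),\nu_D(b)\rangle/\zeta_0$, wait — using $f'\bullet\wi f\equiv1$ this is just $-1\neq0$. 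So $\zeta_0$ is a simple zero; by the argument principle $G(b,\cdot)$ has exactly one zero in $\CDD$ counted with multiplicity (winding number of $\zeta\rho(\zeta)\varphi_b(\zeta)$ on a slightly shrunk circle, plus a boundary contribution of exactly $1$ from the simple boundary zero at $\zeta_0$, totalling $1$), hence $\zeta_1$ cannot also be a zero unless $\zeta_1=\zeta_0$. Making this boundary-winding-number bookkeeping rigorous — accounting for the simple zero sitting exactly on $\TT$ — is the real work; everything else is a direct consequence of the normalization $f'\bullet\wi f\equiv1$ and strong linear convexity.
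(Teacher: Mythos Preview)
Your strategy differs substantially from the paper's. First, case (b) is vacuous: since $f(\DD)\su D$ while $f(\TT)\su\pa D$, these sets are already disjoint, so there is nothing to prove. The paper dispatches this in one clause (``it suffices to check the injectivity on $\TT$'').

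For case (a) the paper does \emph{not} use $G$ or winding numbers at all. It argues metrically: if $f(\zeta_1)=f(\zeta_2)=:b$ with $\zeta_1\neq\zeta_2$ on $\TT$, the two radial curves $t\mapsto f(t\zeta_j)$ hit $\pa D$ transversally at $b$ (this is precisely $\rho(\zeta_j)^{-1}\neq 0$). Placing a ball inside $D$ tangent to $\pa D$ at $b$ and computing explicitly in that ball, one shows that $\wi k_D(f(t\zeta_1),f(s_t\zeta_2))$ stays bounded for suitable $s_t\in(0,1)$ as $t\to 1$. Since $f$ is a complex geodesic this equals $p(t\zeta_1,s_t\zeta_2)\to\infty$, a contradiction.

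Your route via $G(b,\cdot)$ is natural and can be made to work, but the decisive step has a genuine gap. You correctly note that $G(b,\cdot)$ has \emph{simple} zeros at both $\zeta_0$ and $\zeta_1$ (from $f'\bullet\wi f\equiv 1$), but neither proposed conclusion follows. The ``approach direction'' argument only shows $F(z_m)\to\zeta_0$ along $z_m=f(t_m\zeta_0)$ and $F(z_m)\to\zeta_1$ along $z_m=f(t_m\zeta_1)$; that $F$ fails to extend continuously to $b$ is not a contradiction, and ``uniqueness pins it down'' is unjustified. The ``boundary winding number'' bookkeeping is not carried out, and with two zeros of $G(b,\cdot)$ sitting on $\TT$ you cannot simply read off a total count of $1$.

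What is missing is a connectivity step. By the Implicit Function Theorem there are holomorphic local branches $F_{\zeta_0},F_{\zeta_1}$ solving $G(z,\cdot)=0$ on a common neighborhood $U\ni b$, with $F_{\zeta_j}(b)=\zeta_j$. For $t<1$ close to $1$ one has $F_{\zeta_0}(f(t\zeta_0))=t\zeta_0\in\DD$, while $F_{\zeta_0}(f(t\zeta_1))\notin\CDD$ (the unique zero of $G(f(t\zeta_1),\cdot)$ in $\DD$ is $t\zeta_1$, which lies near $\zeta_1$, and there is none on $\TT$). Joining $f(t\zeta_0)$ to $f(t\zeta_1)$ by a path in $U\cap D$ (possible since $\pa D$ is smooth), the continuous function $|F_{\zeta_0}|$ must hit the value $1$ at some $z^*\in D$, producing a zero of $G(z^*,\cdot)$ on $\TT$ --- impossible for $z^*\in D$ by strong linear convexity. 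Supplying this step turns your outline into a valid alternative proof, more algebraic than the paper's metric one.
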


\begin{proof}The function $f$ has the left-inverse in $\DD$, so it suffices to check the injectivity on $\TT$. Suppose that $f(\zeta_1)=f(\zeta_2)$ for some $\zeta_1,\zeta_2\in\TT$, $\zeta_1\neq\zeta_2$, and consider the curves $$\gamma_j:[0,1]\ni t\longmapsto f(t\zeta_j)\in\overline D,\ j=1,2.$$ Since $$\re\langle\gamma_j'(1),\nu_D(f(\zeta_j))\rangle=\re\langle\zeta_jf'(\zeta_j),\nu_D(f(\zeta_j))\rangle
=\rho(\zeta_j)^{-1}\neq 0,$$ the curves $\gamma_j$ hit $\pa D$ transversally at their common point $f(\zeta_1)$. We claim that there exists $C>0$ such that for $t\in(0,1)$ close to $1$ there is $s_t\in(0,1)$ satisfying $\wi k_D(f(t\zeta_1),f(s_t\zeta_2))<C$. It will finish the proof since $$\wi k_D(f(t\zeta_1),f(s_t\zeta_2))=p(t\zeta_1,s_t\zeta_2)\to\infty,\ t\to 1.$$ We may assume that $f(\zeta_1)=0$ and $\nu_D(0)=(1,0,\ldots,0)=:e_1$. There exists a ball $B\su D$ tangent to $\pa D$ at $0$. Using a homothety, if necessary, one can assume that $B=\BB_n-e_1$. From the transversality of $\gamma_1,\gamma_2$ to $\pa D$ there exists a cone $$A:=\{z\in\CC^n:-\re z_1>k|z|\},\quad k>0,$$ such that $\gamma_1(t),\gamma_2(t)\in A\cap B$ if $t\in(0,1)$ is close to $1$. For $z\in A$ let $k_z>k$ be a positive number satisfying the equality $$|z|=\frac{-\re z_1}{k_z}.$$ 

Note that for any $a\in\gamma_1((0,1))$ sufficiently close to $0$ one may find $b\in\gamma_2((0,1))\cap A\cap B$ such that $\re b_1=\re a_1$. To get a contradiction it suffices to show that $\wi k_D(a,b)$ is bounded from above by a constant independent on $a$ and $b$. 

We have the following estimate \begin{multline*}\wi k_D(a,b)\leq\wi k_{\BB_n-e_1}(a,b)=\wi k_{\BB_n}(a+e_1,b+e_1)=\\=\tanh^{-1}\sqrt{1-\frac{(1-|a+e_1|^2)(1-|b+e_1|^2)}{|1-\langle a+e_1,b+e_1 \rangle|^2}}.\end{multline*} The last expression is bounded from above if and only if $$\frac{(1-|a+e_1|^2)(1-|b+e_1|^2)}{|1-\langle a+e_1,b+e_1\rangle|^2}$$ is bounded from below by some positive constant. We estimate $$\frac{(1-|a+e_1|^2)(1-|b+e_1|^2)}{|1-\langle a+e_1,b+e_1\rangle|^2}=\frac{(2\re a_1+|a|^2)(2\re b_1+|b|^2)}{|\langle a, b\rangle+a_1+\overline b_1|^2}=$$$$=\frac{\left(2\re a_1+\frac{(\re a_1)^2}{k^2_a}\right)\left(2\re a_1+\frac{(\re a_1)^2}{k^2_b}\right)}{|\langle a, b\rangle+2\re a_1+i\im a_1-i\im b_1|^2}\geq\frac{(\re a_1)^2\left(2+\frac{\re a_1}{k^2_a}\right)\left(2+\frac{\re a_1}{k^2_b}\right)}{2|\langle a, b\rangle+i\im a_1-i\im b_1|^2+2|2\re a_1|^2}$$$$\geq\frac{(\re a_1)^2\left(2+\frac{\re a_1}{k^2_a}\right)\left(2+\frac{\re a_1}{k^2_b}\right)}{2(|a||b|+|a|+|b|)^2+8(\re a_1)^2}=\frac{(\re a_1)^2\left(2+\frac{\re a_1}{k^2_a}\right)\left(2+\frac{\re a_1}{k^2_b}\right)}{2\left(\frac{(-\re a_1)^2}{k^2_ak^2_b}-\frac{\re a_1}{k_a}-\frac{\re a_1}{k_b}\right)^2+8(\re a_1)^2}$$$$=\frac{\left(2+\frac{\re a_1}{k^2_a}\right)\left(2+\frac{\re a_1}{k^2_b}\right)}{2\left(\frac{-\re a_1}{k^2_ak^2_b}+\frac{1}{k_a}+\frac{1}{k_b}\right)^2+8}>\frac{1}{2(1+2/k)^2+8}.$$ This finishes the proof.
\end{proof}

\medskip

Assume that we are in the settings of Proposition~\ref{1} and $D$ has real analytic boundary. Our aim is to replace $W$ with a neighborhood of $\ov D$.

\begin{remm}\label{przed34}
For $\zeta_0\in\DD_f$ we have $G(f(\zeta_0),\zeta_0)=0$ and $\frac{\partial G}{\partial\zeta}(f(\zeta_0),\zeta_0)=-1$. By the Implicit Function Theorem there exist neighborhoods $U_{\zeta_0},V_{\zeta_0}$ of $f(\zeta_0),\zeta_0$ respectively and a holomorphic function $F_{\zeta_0}:U_{\zeta_0}\longrightarrow V_{\zeta_0}$ such that for any $z\in U_{\zeta_0}$ the point $F_{\zeta_0}(\zeta)$ is the unique solution of the equation $G(z,\zeta)=0$, $\zeta\in V_{\zeta_0}$.

In particular, if $\zeta_0\in\DD$ then $F_{\zeta_0}=F$ near $f(\zeta_0)$.
\end{remm}

\begin{propp}\label{34}
Let $f:\DD\longrightarrow D$ be an $E$-mapping. Then there exist arbitrarily small neighborhoods $U$, $V$ of $\overline D$, $\CDD$ respectively such that for any $z\in U$ the equation $G(z,\zeta)=0$, $\zeta\in V$, has exactly one solution.
\end{propp}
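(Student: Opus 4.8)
The plan is to upgrade the purely interior statement of Proposition~\ref{1} (where $W$ is only an open set containing $\overline D\setminus f(\TT)$) to a genuine neighborhood of all of $\overline D$, using the extra regularity available in the real analytic case together with the local solution maps $F_{\zeta_0}$ from Remark~\ref{przed34}. The key point is that $f$ and $\wi f$ now extend holomorphically past $\TT$ to the disc $\DD_f$, so $G(z,\zeta)=(z-f(\zeta))\bullet\wi f(\zeta)$ is holomorphic on $\CC^n\times\DD_f$, and for each boundary point $\zeta_0\in\TT$ we already know (Remark~\ref{przed34}) that there is a neighborhood $U_{\zeta_0}$ of $f(\zeta_0)$ and a neighborhood $V_{\zeta_0}$ of $\zeta_0$ in $\DD_f$ on which $z\mapsto F_{\zeta_0}(z)$ is the unique zero of $G(z,\cdot)$ in $V_{\zeta_0}$. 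So locally near every point of $f(\TT)$ the desired $F$ already exists; what must be checked is that these local maps, Proposition~\ref{1}'s global $F$, and the interior maps $F_{\zeta_0}$ ($\zeta_0\in\DD$) glue to a single holomorphic map on a neighborhood of $\overline D$, and that on a suitably shrunk neighborhood $V$ of $\CDD$ the zero of $G(z,\cdot)$ in $V$ is unique.

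First I would fix the neighborhood $V$ of $\CDD$: cover $\TT$ by finitely many of the $V_{\zeta_0}$, let $V_1$ be their union together with $\DD$ (a neighborhood of $\CDD$ inside $\DD_f$), and observe that on each $V_{\zeta_0}$ the only zero of $G(z,\cdot)$ is $F_{\zeta_0}(z)$ for $z\in U_{\zeta_0}$. Next, uniqueness: the argument-principle computation in the proof of Proposition~\ref{1} shows that for $z$ in a neighborhood $W$ of $\overline D\setminus f(\TT)$, $G(z,\cdot)$ has exactly one zero in $\DD$ and it is simple; combining this with the finitely many local statements near $f(\TT)$, I would argue that there is a neighborhood $U$ of $\overline D$ and a neighborhood $V\subset V_1$ of $\CDD$ such that for every $z\in U$ the function $G(z,\cdot)$ has exactly one zero in $V$. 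The mechanism is a compactness/continuity argument: $\wind G(z,\cdot)|_{\TT}=1$ persists (it equals the winding number of $\zeta\mapsto\zeta\rho(\zeta)\varphi_z(\zeta)$ on $\TT$, which is $1$ for $z\in\overline D$ by strong linear convexity and stays $1$ for $z$ near $\overline D$), so $G(z,\cdot)$ has exactly one zero in $\DD$; and the local inverse maps $F_{\zeta_0}$ prevent any additional zero from escaping across $\TT$ into $V\setminus\DD$ as long as $V$ is chosen thin enough — here one uses that if $z_k\to z\in\overline D$ and $\zeta_k\in V$ with $G(z_k,\zeta_k)=0$, any accumulation point $\zeta_*\in\overline V$ satisfies $G(z,\zeta_*)=0$, hence $\zeta_*\in\DD$ or $\zeta_*\in\TT$; in the latter case $\langle z-f(\zeta_*),\nu_D(f(\zeta_*))\rangle=0$ forces $z\in a+T^{\CC}_D(a)$ with $a=f(\zeta_*)$, impossible for $z\in\overline D$ unless $z=a=f(\zeta_*)$, and that case is handled by $F_{\zeta_*}$. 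Finally, define $F$ on $U$ by $F(z):=$ the unique zero; holomorphy follows from the Implicit Function Theorem since the zero is simple (the derivative $\frac{\partial G}{\partial\zeta}$ is nonzero there — at boundary points this is Remark~\ref{przed34}, in the interior it is the simplicity established in Proposition~\ref{1}), and $F$ agrees with the $F$ of Proposition~\ref{1} and with each $F_{\zeta_0}$ on overlaps by uniqueness.

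The main obstacle I anticipate is the bookkeeping needed to rule out \emph{spurious} zeros of $G(z,\cdot)$ in $V\setminus\CDD$ — i.e. showing $V$ can be shrunk uniformly in $z\in U$. The winding-number argument controls zeros inside $\DD$, and the finitely many $F_{\zeta_0}$ control zeros near $f(\TT)$, but one must argue that outside the union of the $V_{\zeta_0}$, the function $G(z,\cdot)$ has no zero on $V\setminus\DD$ at all: this is where a compactness argument on the compact set $(\overline V\setminus \bigcup V_{\zeta_0})\setminus\DD$ is used, together with the fact that $G(z,\cdot)$ cannot vanish there for $z\in\overline D$ (a boundary zero would again contradict strong linear convexity, an interior-disc zero is impossible by construction since that region lies outside $\DD$), and then continuity in $z$ lets one pass to $z$ in a neighborhood $U$ of $\overline D$. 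Assembling these pieces — the global winding count, the finite cover near $f(\TT)$, and the compactness estimate away from $f(\TT)$ — gives the uniform choice of $U$ and $V$ claimed in the statement, and the holomorphy of $F$ is then immediate.
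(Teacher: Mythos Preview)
Your overall strategy --- a compactness argument that upgrades Proposition~\ref{1} using the local implicit-function charts $F_{\zeta_0}$ from Remark~\ref{przed34} --- matches the paper's, but your proposal misses the one ingredient that makes the case analysis close: the \emph{injectivity of $f$ on $\CDD$}, proved in the proposition immediately preceding Remark~\ref{przed34}. In your contradiction step, when two zero-sequences accumulate at points $\zeta_1,\zeta_2\in\TT$, strong linear convexity gives only $z_0=f(\zeta_1)=f(\zeta_2)$; without injectivity there is no reason for $\zeta_1=\zeta_2$, and then a single local map $F_{\zeta_*}$ cannot ``handle'' the situation --- indeed, if $f$ hit the same boundary point twice, the two distinct local inverses $F_{\zeta_1}$, $F_{\zeta_2}$ would produce two genuine zeros near $\TT$, and your finite cover of $\TT$ by the $V_{\zeta_0}$ would not rule this out. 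The paper invokes injectivity exactly here. You also do not treat the mixed case (one limit $\zeta_1\in\TT$, the other $\zeta_2\in\DD$): the paper handles it via Hurwitz's theorem, approximating $z_0=f(\zeta_1)$ by $f(\eta_m)$ with $\eta_m\in\DD$, $\eta_m\to\zeta_1$, so that the interior zeros of $G(f(\eta_m),\cdot)$ near $\zeta_2$ must equal $F(f(\eta_m))=\eta_m\to\zeta_1$, contradicting $\zeta_2\in\DD$.

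A smaller point: your assertion that $\wind G(z,\cdot)|_{\TT}=1$ for all $z\in\overline D$ is false at $z\in f(\TT)$, where $\varphi_z$ vanishes on $\TT$ and the winding number is undefined; for $z$ just outside $\overline D$ near $f(\TT)$ the count can drop. This is why the paper, in the first sentence of its proof, relegates existence entirely to Proposition~\ref{1} and Remark~\ref{przed34} and argues only the ``at most one'' direction by contradiction.
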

\begin{proof} In view of Proposition~\ref{1} and Remark~\ref{przed34} it suffices to prove that there exist neighborhoods $U$, $V$ of $\overline D$, $\CDD$ respectively such that for any $z\in U$ the equation $G(z,\cdotp)=0$ has at most one solution $\zeta\in V$.

Assume the contrary. Then for any neighborhoods $U$ of $\overline D$ and $V$ of $\CDD$ there are $z\in U$, $\zeta_1,\zeta_2\in V$, $\zeta_1\neq\zeta_2$ such that $G(z,\zeta_1)=G(z,\zeta_2)=0$. For $m\in\NN$ put $$U_m:=\{z\in\CC^n:\dist(z,D)<1/m\},$$ $$V_m:=\{\zeta\in\CC:\dist(\zeta,\DD)<1/m\}.$$ There exist $z_m\in U_m$, $\zeta_{m,1},\zeta_{m,2}\in V_m$, $\zeta_{m,1}\neq\zeta_{m,2}$ such that $G(z_m,\zeta_{m,1})=G(z_m,\zeta_{m,2})=0$. Passing to a subsequence we may assume that $z_m\to z_0\in\ov D$. Analogously we may assume $\zeta_{m,1}\to\zeta_1\in \CDD$ and $\zeta_{m,2}\to\zeta_2\in\CDD$. Clearly, $G(z_0,\zeta_1)=G(z_0,\zeta_2)=0$. Let us consider few cases.

1) If $\zeta_1,\zeta_2\in\TT$, then $G(z_0,\zeta_j)=0$ is equivalent to $$\langle z_0-f(\zeta_j), \nu_D(f(\zeta_j))\rangle=0,\ j=1,2,$$ consequently $z_0-f(\zeta_j)\in T^{\CC}_D(f(\zeta_j))$. By the strong linear convexity of $D$ we get $z_0=f(\zeta_j)$. But $f$ is injective in $\CDD$, so $\zeta_1=\zeta_2=:\zeta_0$. It follows from Remark~\ref{przed34} that in a sufficiently small neighborhood of $(z_0,\zeta_0)$ all solutions of the equation $G(z,\zeta)=0$ are of the form $(z,F_{\zeta_0}(z))$. Points $(z_m,\zeta_{m,1})$ and $(z_m,\zeta_{m,2})$ belong to this neighborhood for large $m$, which gives a contradiction.

2) If $\zeta_1\in\TT$ and $\zeta_2\in\DD$, then analogously as above we deduce that $z_0=f(\zeta_1)$. Let us take an arbitrary sequence $\{\eta_m\}\su\DD$ convergent to $\zeta_1$. Then $f(\eta_m) \in D$ and $f(\eta_m)\to z_0$, so the sequence $G(f(\eta_m),\cdotp)$ converges to $G(z_0,\cdotp)$ uniformly on $\DD$. Since $G(z_0,\cdotp)\not\equiv 0$, $G(z_0,\zeta_2)=0$ and $\zeta_2\in\DD$, we deduce from Hurwitz Theorem that for large $m$ the functions $G(f(\eta_m),\cdotp)$ have roots $\theta_m\in\DD$ such that $\theta_m\to\zeta_2$. Hence $G(f(\eta_m),\theta_m)=0$ and from the uniqueness of solutions in $D\times\DD$  (Proposition~\ref{1}) we have $$\theta_m=F(f(\eta_m))=\eta_m.$$ This is a contradiction, because the left side tends to $\zeta_2$ and the right one to $\zeta_1$, as $m\to\infty$.

3) We are left with the case $\zeta_1,\zeta_2\in\DD$.
If $z_0\in\overline{D}\setminus f(\TT)$ then $z_0\in W$. In $W\times\DD$ all solutions of the equation $G=0$ are of the form $(z,F(z))$, $z\in W$. But for large $m$ the points $(z_m,\zeta_{m,1})$, $(z_m,\zeta_{m,2})$ belong to $W\times\DD$, which is a contradiction with the uniqueness.

If $z_0\in f(\TT)$, then $z_0=f(\zeta_0)$ for some $\zeta_0\in\TT$. Clearly, $G(f(\zeta_0),\zeta_0)=0$, whence $G(z_0,\zeta_0)=G(z_0,\zeta_1)=0$ and $\zeta_0\in\TT$, $\zeta_1\in \DD$. This is just the case 2), which has been already considered.
\end{proof}

\begin{corr} There are neighborhoods $U$, $V$ of $\overline D$ and $\CDD$ respectively with $V\Subset\DD_f$, such that the function $F$ extends holomorphically on $U$. Moreover, all solutions of the equation $G|_{U\times V}=0$ are of the form $(z,F(z))$, $z\in U$.

In particular, $F\circ f=\id_{V}$.
\end{corr}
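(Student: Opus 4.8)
The plan is to assemble the corollary from the two facts already in hand: Proposition~\ref{34}, which gives for \emph{arbitrarily small} neighborhoods $U$ of $\ov D$ and $V$ of $\CDD$ the existence and uniqueness of a solution $\zeta\in V$ of $G(z,\zeta)=0$ for every $z\in U$; and Remark~\ref{przed34}, which provides local holomorphic solution functions $F_{\zeta_0}$ near each $(f(\zeta_0),\zeta_0)$ with $\zeta_0\in\DD_f$, agreeing with $F$ near $f(\zeta_0)$ when $\zeta_0\in\DD$. First I would fix, using Proposition~\ref{34}, a pair of neighborhoods $U_0\supset\ov D$, $V_0\supset\CDD$ small enough that $V_0\Subset\DD_f$ (this is possible since $\DD_f$ is an open neighborhood of $\CDD$ and Proposition~\ref{34} allows $V$ arbitrarily small) and such that the unique-solution property holds on $U_0\times V_0$. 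Define $\widehat F:U_0\longrightarrow V_0$ by letting $\widehat F(z)$ be that unique solution.

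Next I would show $\widehat F$ is holomorphic. This is a local statement, so fix $z^*\in U_0$ and let $\zeta^*:=\widehat F(z^*)\in V_0\Subset\DD_f$. Then $G(z^*,\zeta^*)=0$ with $(z^*,\zeta^*)\in\CC^n\times\DD_f$, so Remark~\ref{przed34} applies at $\zeta^*$ (note $\zeta^*$ need not be $f$ of anything, but the Implicit Function Theorem argument there only uses that $G(\cdot,\cdot)$ is holomorphic near $(z^*,\zeta^*)$ and that $\tfrac{\pa G}{\pa\zeta}(z^*,\zeta^*)\neq0$ — the latter because $\zeta^*$ is a \emph{simple} root of $G(z^*,\cdot)$, which follows from uniqueness together with $\wind G(z,\cdot)=1$ as in the proof of Proposition~\ref{1}; alternatively one invokes the version of Remark~\ref{przed34} as literally stated and uses that near any solution point the root is locally unique, hence simple). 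The resulting local holomorphic solution function must coincide with $\widehat F$ on a neighborhood of $z^*$ by the uniqueness clause of Proposition~\ref{34}. Hence $\widehat F$ is holomorphic on all of $U_0$. Shrinking is harmless, so we may take $U:=U_0$, $V:=V_0$; this gives the first sentence and $V\Subset\DD_f$.

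For the ``moreover'' clause: if $(z,\zeta)\in U\times V$ satisfies $G(z,\zeta)=0$, then by the uniqueness in Proposition~\ref{34} we must have $\zeta=\widehat F(z)$, i.e. every solution has the asserted form. Finally, for $\zeta\in\DD$ we have $f(\zeta)\in D\su U$ and $G(f(\zeta),\zeta)=(f(\zeta)-f(\zeta))\bullet\widetilde f(\zeta)=0$, so $\zeta$ is \emph{a} solution in $V$; by uniqueness $\widehat F(f(\zeta))=\zeta$, i.e. $\widehat F\circ f=\id_V$ (here we use $\DD\su V$). Since $\widehat F$ restricted to $W$ from Proposition~\ref{1} agrees with the old $F$ (both equal the unique root in $\DD\su V$ for $z\in W\cap U$, by Remark~\ref{przed34}'s last line), $\widehat F$ legitimately extends $F$, and we rename it $F$.

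I do not expect any genuine obstacle: the content is entirely carried by Proposition~\ref{34} and Remark~\ref{przed34}, and the only point demanding a word of care is that the local inverse-function construction of Remark~\ref{przed34} be applicable at solution points $(z,\zeta)$ with $\zeta\in\DD_f\setminus\CDD$ or $\zeta\in\TT$, where $z$ is not of the form $f(\zeta)$; this is handled by noting that the nonvanishing of $\tfrac{\pa G}{\pa\zeta}$ at such a point follows from the root being simple, which in turn follows from the local uniqueness furnished by Proposition~\ref{34} itself. The rest is bookkeeping about shrinking neighborhoods and matching with the $F$ of Proposition~\ref{1}.
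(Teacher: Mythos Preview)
Your approach is correct and is exactly what the paper intends; the corollary is stated there without proof as an immediate consequence of Proposition~\ref{34} and Remark~\ref{przed34}. Two small points that deserve tightening. First, rather than arguing $\partial G/\partial\zeta(z^*,\zeta^*)\neq 0$ at an \emph{arbitrary} solution point (your winding-number option fails near $f(\TT)$, and ``locally unique hence simple'' needs a discriminant argument), it is cleaner to cover $\ov D$ by $W$ from Proposition~\ref{1} together with the sets $U_{\zeta_0}$, $\zeta_0\in\TT$, from Remark~\ref{przed34}, and glue $F$ and the $F_{\zeta_0}$: on overlaps inside $U_0$ each of these takes values in $V_0$ and solves $G(z,\cdot)=0$, so Proposition~\ref{34}'s uniqueness forces agreement. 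Second, for $F\circ f=\id_V$ you need $f(V)\subset U$; this holds after shrinking $V$ since $f(\CDD)\subset\ov D\subset U$ and $f$ is continuous on $\DD_f$, and then $G(f(\zeta),\zeta)=0$ for \emph{every} $\zeta\in V$ (not just $\zeta\in\DD$) combined with uniqueness gives the identity on all of $V$.
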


\section{H\"older estimates}\label{22}

\begin{df}\label{30} For a given $c>0$ let the family $\mathcal{D}(c)$ consist of all pairs $(D,z)$, where $D\su\CC^n$, $n\geq 2$, is a bounded pseudoconvex domain with real $\mathcal C^2$ boundary and $z\in D$, satisfying
\begin{enumerate}
\item $\dist(z,\partial D)\geq 1/c$;
\item the diameter of $D$ is not greater than $c$ and $D$ satisfies the interior ball condition with a radius $1/c$;
\item for any $x,y\in D$ there exist $m\leq 8 c^2$ and open balls $B_0,\ldots,B_m\subset D$ of radius $1/(2c)$ such that $x\in B_0$, $y\in B_m$ and the distance between the centers of the balls $B_j$, $B_{j+1}$ is not greater than $1/(4c)$ for $j=0,\ldots,m-1$;
\item for any open ball $B\subset\mathbb{C}^n$ of radius not greater than $1/c$, intersecting non-emptily with $\pa D$, there exists a mapping $\Phi\in\OO(\overline{D},\mathbb{C}^n)$ such that
\begin{enumerate}
\item for any $w\in\Phi(B\cap\partial D)$ there is a ball of radius $c$ containing $\Phi(D)$ and tangent to $\partial\Phi(D)$ at $w$ (let us call it the ``exterior ball condition'' with a radius $c$);
\item $\Phi$ is biholomorphic in a neighborhood of $\ov B$ and $\Phi^{-1}(\Phi(B))=B$;
\item entries of all matrices $\Phi'$ on $B\cap\ov D$ and $(\Phi^{-1})'$ on $\Phi(B\cap\overline{D})$ are bounded in modulus by $c$;
\item $\dist(\Phi(z),\partial\Phi(D))\geq 1/c$;
\end{enumerate}
\item the normal vector $\nu_D$ is Lipschitz with a constant $2c$, that is $$|\nu_D(a)-\nu_D(b)|\leq 2c|a-b|,\ a,b\in \partial D;$$
\item the $\eps$-hull of $D$, i.e. a domain $D_{\eps}:=\{w\in\mathbb C^n:\dist (w,D)<\eps\}$, is strongly pseudoconvex for any $\eps\in (0,1/c).$
\end{enumerate}
\end{df}

Recall that the {\it interior ball condition} with a radius $r>0$ means that for any point $a\in\pa D$ there is $a'\in D$ and a ball $B_n(a',r)\su D$ tangent to $\pa D$ at $a$. Equivalently $$D=\bigcup_{a'\in D'}B_n(a',r)$$ for some set $D'\su D$.

It may be shown that (2) and (5) may be expressed in terms of boundedness of the normal curvature, boundedness of a domain and the condition (3). This however lies beyond the scope of this paper and needs some very technical arguments so we omit the proof of this fact. The reasons why we decided to use (2) in such a form is its connection with the condition (3) (this allows us to simplify the proof in some places).

\begin{rem}\label{con}
Note that any convex domain satisfying conditions (1)-...-(4) of Definition~\ref{30} satisfies conditions (5) and (6), as well.

Actually, it follows from (2) that for any $a\in\pa D$ there exists a ball $B_n(a',1/c)\su D$ tangent to $\pa D$ at $a$. Then $$\nu_D(a)=\frac{a'-a}{|a'-a|}=c(a'-a).$$ Hence $$|\nu_D(a)-\nu_D(b)|=c|a'-a-b'+b|=c|a'-b'-(a-b)|\leq c|a'-b'|+c|a-b|.$$ Since $D$ is convex, we have $|a'-b'|\leq|a-b|$, which gives (5).

The condition (6) is also clear --- for any $\eps>0$ an $\eps$-hull of a strongly convex domain is strongly convex.
\end{rem}

\begin{rem}
For a convex domain $D$ the condition (3) of Definition \ref{30} amounts to the condition (2).

Indeed, for two points $x,y\in D$ take two balls of radius $1/(2c)$ containing them and contained in $D$. Then divide the interval between the centers of the balls into $[4c^2]+1$ equal parts and take balls of radius $1/(2c)$ with centers at the points of the partition.

Note also that if $D$ is strongly convex and satisfies the interior ball condition with a radius $1/c$ and the exterior ball condition with a radius $c$, one can take $\Phi:=\id_{\CC^n}$.
\end{rem}

\begin{rem}\label{D(c),4}
For a strongly pseudoconvex domain $D$ and $c'>0$ and for any $z\in D$ such that $\dist(z,\partial D)>1/c'$ there exists $c=c(c')>0$ satisfying $(D,z)\in\mathcal{D}(c)$.

Indeed, the conditions (1)-...-(3) and (5)-(6) are clear. Only (4) is non-trivial.

The construction of the mapping $\Phi$ amounts to the construction of Forn\ae ss peak functions. Actually, apply directly Proposition 1 from \cite{For} to any boundary point of $\partial D$ (obviously $D$ has a Stein neighborhood basis). This gives a covering of $\partial D$ with  a finite number of balls $B_j$, maps $\Phi_j\in\OO(\overline{D},\mathbb{C}^n)$ and strongly convex $C^\infty$-smooth domains $C_j$, $j=1,\ldots, N$, such that
\begin{itemize}\item $\Phi_j(D)\subset C_j$;
\item $\Phi_j(\ov D)\subset\ov C_j$;
\item $\Phi_j(B_j\setminus\ov D)\subset\mathbb C^n\setminus\ov C_j$;
\item $\Phi_j^{-1}(\Phi_j(B_j))=B_j$;
\item $\Phi_j|_{B_j}: B_j\longrightarrow \Phi_j(B_j)$ is biholomorphic.
\end{itemize} Therefore, one may choose $c>0$ such that every $C_j$ satisfies the exterior ball condition with $c$, i.e. for any $x\in \partial C_j$ there is a ball of radius $c$ containing $C_j$ and tangent to $\partial C_j$ at $x$, every ball of radius $1/c$ intersecting non-emptily with $\pa D$ is contained in some $B_j$ (here one may use a standard argument invoking the Lebesgue number) and the conditions (c), (d) are also satisfied (with $\Phi:=\Phi_j$).
\end{rem}

In this section we use the words `uniform', `uniformly' if $(D,z)\in \mathcal D(c)$. This means that estimates will depend only on $c$ and will be independent on $D$ and $z$ if $(D,z)\in\mathcal{D}(c)$ and on $E$-mappings of $D$ mapping $0$ to $z$. Moreover, in what follows we assume that $D$ is a strongly linearlu convex domain with real-analytic boundary.

\begin{prop}\label{7}
Let $f:(\mathbb{D},0)\longrightarrow(D,z)$ be an $E$-mapping. Then $$\dist(f(\zeta),\partial D)\leq C(1-|\zeta|),\ \zeta\in\CDD$$ with $C>0$ uniform if $(D,z)\in\mathcal{D}(c)$.
\end{prop}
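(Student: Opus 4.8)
The plan is to bound $\dist(f(\zeta),\pa D)$ by the modulus of the left inverse $F$ evaluated along the disc, exploiting that $F$ is a bounded holomorphic map into $\DD$ defined on a neighborhood of $\ov D$ (by Proposition~\ref{34} and its corollary). Since $F\circ f=\id$, for $\zeta\in\DD$ one has $|\zeta|=|F(f(\zeta))|$, and the Schwarz--Pick behaviour of $F$ near $\pa D$ forces $f(\zeta)$ to approach $\pa D$ at a controlled rate as $|\zeta|\to 1$. Concretely, I would first observe that it suffices to estimate $1-|F(w)|$ from below by a uniform constant times $\dist(w,\pa D)$ for $w\in\ov D$; then substituting $w=f(\zeta)$ gives $\dist(f(\zeta),\pa D)\le C(1-|F(f(\zeta))|)=C(1-|\zeta|)$, which is the claim (the extension to $\CDD$ follows by continuity of $f$).

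To get the lower bound $1-|F(w)|\ge c'\dist(w,\pa D)$, I would use the exterior ball / defining structure encoded in $\mathcal D(c)$ together with the explicit form $G(z,\zeta)=(z-f(\zeta))\bullet\wi f(\zeta)$. The point $F(w)$ solves $G(w,F(w))=0$; comparing with $G(f(\zeta),\zeta)=0$ and using that $f'\bullet\wi f\equiv 1$ (Proposition~\ref{4}), one controls how far $F(w)$ can be pushed toward $\TT$. Alternatively — and this is probably cleaner — I would compose $F$ with the charts $\Phi$ from condition (4) of Definition~\ref{30} to reduce to the model case of a ball: if $B_n(a',1/c)\su D$ is an interior ball tangent to $\pa D$ at the boundary point nearest to $w$, then $\dist(w,\pa D)=1/c-|w-a'|$ in that ball, and $F$ restricted to $B_n(a',1/c)$ is a holomorphic map of a fixed-size ball into $\DD$, so the classical Schwarz--Pick inequality on the ball yields $1-|F(w)|^2\ge\text{const}\cdot(1-c^2|w-a'|^2)\ge\text{const}\cdot\dist(w,\pa D)$ with a constant depending only on $c$. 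Uniformity is then automatic since every quantity in sight ($c$, the radii, the bound on $\|\Phi'\|$) depends only on the class $\mathcal D(c)$.

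The main obstacle I anticipate is making the uniformity genuinely uniform: $F$ itself depends on the $E$-mapping $f$ and on $D$, and a priori its sup-norm bound on the \emph{enlarged} neighborhood $U\supset\ov D$ produced by Proposition~\ref{34} could degenerate as $D$ and $f$ vary within $\mathcal D(c)$. To handle this I would not use the extended $F$ at all but work with $F$ only on $\ov D$, where $F$ maps into $\DD$ with no further loss, and exploit the interior ball condition (which holds with radius $1/c$ uniformly, by condition (2) of Definition~\ref{30}) to always have a fixed-radius ball inside $D$ through a point as close as we like to any given $w\in D$. That keeps all constants tied to $c$ alone. A secondary technical point is the behaviour at $\zeta\in\TT$: there $F(f(\zeta))=\zeta$ need not literally hold, but since $f$ extends continuously (indeed $\cC^{1/2}$) to $\CDD$ and $\dist(f(\zeta),\pa D)=0$ for $\zeta\in\TT$, the inequality $\dist(f(\zeta),\pa D)\le C(1-|\zeta|)$ is trivially an equality $0=0$ there, and for interior $\zeta$ it passes to the closure by continuity. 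I would organize the write-up as: (i) reduce to the Schwarz--Pick estimate for $F$ on a fixed-size interior ball; (ii) prove that estimate in the ball model; (iii) transfer via $F\circ f=\id$ and conclude, checking that $C=C(c)$.
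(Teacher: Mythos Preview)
Your strategy has a real gap at the Schwarz--Pick step. For a holomorphic $F:B_n(a',1/c)\to\DD$ the inequality you quote,
\[
1-|F(w)|^2\ \ge\ \text{const}\cdot\bigl(1-c^2|w-a'|^2\bigr),
\]
is simply false with a constant depending only on $c$: the correct Schwarz--Pick bound carries a factor $\dfrac{1-|F(a')|}{1+|F(a')|}$, and nothing you have written controls $|F(a')|$. The center $a'$ is just some point at distance $1/c$ from $\pa D$, and $F$ depends on $f$; a priori $|F(a')|$ could be arbitrarily close to $1$. So the ``constant depending only on $c$'' is not there.

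What is missing is precisely the piece the paper supplies: a uniform bound on $k_D(z,w)$ (equivalently on $|F(w)|$, since $F(z)=0$) for every $w$ with $\dist(w,\pa D)\ge 1/c$. This is obtained not from the interior ball condition alone but from condition~(3) of Definition~\ref{30} --- the chain of balls of radius $1/(2c)$ joining $z$ to $w$ in at most $8c^2$ steps --- which you never invoke. Once that bound is in hand, the rest is easy: the paper compares $k_D(z,f(\zeta))$ from above (via the chain and one interior ball at the nearest boundary point) with the lower bound $k_D(z,f(\zeta))=p(0,\zeta)\ge-\tfrac12\log(1-|\zeta|)$ coming from $f$ being a complex geodesic. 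Your route through the explicit left inverse $F$ can be made to work, but only after you insert this same chain-of-balls estimate to pin down $|F(a')|$; at that point your argument is a repackaging of the paper's, with $p(0,F(\cdot))$ standing in for $k_D(z,\cdot)$.
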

\begin{proof} There exists a uniform $C_1$ such that $$\text{if }\dist(w,\partial D)\geq 1/c\text{ then }k_D(w,z)<C_1.$$ Indeed, let $\dist(w,\partial D)\geq 1/c$ and let balls $B_0,\ldots,B_m$ with centers $b_0,\ldots,b_m$ be chosen to the points $w$, $z$ as in the condition (3) of Definition~\ref{30}. Then
\begin{multline*}k_D(w,z)\leq
k_D(w,b_0)+\sum_{j=0}^{m-1}k_D(b_j,b_{j+1})+k_D(b_m,z)\leq\\\leq k_{B_n(w,1/c)}(w,b_0)+\sum_{j=0}^{m-1}k_{B_j}(b_j,b_{j+1})+k_{B_n(z,1/c)}(b_m,z)=\\=p\left(0,\frac{|w-b_0|}{1/c}\right)+\sum_{j=0}^{m-1}p\left(0,\frac{|b_j-b_{j+1}|}{1/(2c)}\right)+
p\left(0,\frac{|b_m-z|}{1/c}\right)\leq\\\leq(m+2)p\left(0,\frac{1}{2}\right)\leq(8c^2+2)p\left(0,\frac{1}{2}\right)=:C_1.
\end{multline*}

If $\zeta\in\mathbb{D}$ is such that
$\dist(f(\zeta),\partial D)\geq 1/c$ then $$k_D(f(0),f(\zeta))\leq
C_2-\frac{1}{2}\log\dist(f(\zeta),\partial D)$$ with a uniform $C_2:=C_1+\frac{1}{2}\log c$.

In the other case, i.e. when $\dist(f(\zeta),\partial D)<1/c$, denote by $\eta$ the nearest point to
$f(\zeta)$ lying on $\partial D$. Let $w\in D$ be a center of a ball $B$ of radius $1/c$
tangent to $\partial D$ at $\eta$. By the condition (2) of Definition~\ref{30} we have $B\subset D$. Hence
\begin{multline*}k_D(f(0),f(\zeta))\leq k_D(f(0),w)+k_D(w,f(\zeta))\leq\\\leq
C_1+k_B(w,f(\zeta))\leq C_1+\frac{1}{2}\log 2-\frac{1}{2}\log\left(1-\frac{|f(\zeta)-w|}{1/c}\right)=\\=C_1+\frac{1}{2}\log 2-\frac{1}{2}\log(c\dist(f(\zeta),\partial B))=C_3-\frac{1}{2}\log\dist(f(\zeta),\partial D)
\end{multline*}
with a uniform $C_3:=C_1+\frac{1}{2}\log\frac{2}{c}$.

We have obtained the same type estimates in both cases. On the other side, by
Corollary~\ref{5} $$k_D(f(0),f(\zeta))=p(0,\zeta)\geq-\frac{1}{2}\log(1-|\zeta|),$$ which finishes the proof.
\end{proof}

Recall that we have assumed that $\rho$ is of the form~\eqref{rho}.
\begin{prop}\label{9}
Let $f:(\mathbb{D},0)\longrightarrow(D,z)$ be an $E$-mapping. Then $$C_1<\rho(\zeta)^{-1}<C_2,\ \zeta\in\TT,$$ where
$C_1,C_2$ are uniform if $(D,z)\in\mathcal{D}(c)$.
\end{prop}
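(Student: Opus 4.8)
The quantity $\rho(\zeta)^{-1}=\langle\zeta f'(\zeta),\nu_D(f(\zeta))\rangle$ is, by Proposition~\ref{4} and the normalization~\eqref{rho}, positive on $\TT$ (it equals $\re$ of itself there because $f'\bullet\widetilde f\equiv 1$), so the task is purely to bound it away from $0$ and from $\infty$ uniformly in $(D,z)\in\mathcal D(c)$ and in the $E$-mapping $f$. The plan is to obtain the upper bound $\rho(\zeta)^{-1}<C_2$ from the already established H\"older estimate of Proposition~\ref{7} together with the Schwarz--Pick--type behaviour of $f$ near $\TT$, and to obtain the lower bound $\rho(\zeta)^{-1}>C_1$ from the interior ball condition (condition (2) of Definition~\ref{30}), which forces $r\circ f$ to decay at most linearly as one moves radially inward from a boundary point.

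\emph{Upper bound.} Fix $\zeta_0\in\TT$. Consider the radial segment $t\longmapsto f(t\zeta_0)$, $t\in[0,1]$. By Proposition~\ref{7}, $\dist(f(t\zeta_0),\partial D)\le C(1-t)$, so $f(t\zeta_0)$ approaches $f(\zeta_0)$ at a controlled rate. On the other hand $\frac{d}{dt}\big(r(f(t\zeta_0))\big)\big|_{t=1}=\re\langle\zeta_0 f'(\zeta_0),\nabla r(f(\zeta_0))\rangle=|\nabla r(f(\zeta_0))|\,\rho(\zeta_0)^{-1}$, where $r$ is a defining function; choosing $r$ with $|\nabla r|$ uniformly comparable to $1$ (e.g. the signed distance, whose regularity is controlled by conditions (2),(5)), this derivative is uniformly comparable to $\rho(\zeta_0)^{-1}$. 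Combining $r(f(t\zeta_0))\le 0$ with $r(f(\zeta_0))=0$ gives $\rho(\zeta_0)^{-1}\le \liminf_{t\to 1}\frac{-r(f(t\zeta_0))}{1-t}\lesssim \liminf_{t\to1}\frac{\dist(f(t\zeta_0),\partial D)}{1-t}\le C$, whence $\rho(\zeta_0)^{-1}<C_2$ with $C_2$ uniform. One must be a little careful here: the clean identification of $|f'|$-size with a difference quotient of $\dist(\cdot,\partial D)$ needs $f$ to extend with some boundary regularity and needs the one-sided derivative to exist; the regularity part is guaranteed since $f$ is an $E$-mapping (hence real analytic up to $\TT$), and the comparison of $-r\circ f$ with $\dist(\cdot,\partial D)$ on a uniform neighborhood is exactly where conditions (2) and (5) enter.

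\emph{Lower bound.} For the reverse estimate, fix again $\zeta_0\in\TT$ and let $B=B_n(w,1/c)\su D$ be the interior ball tangent to $\partial D$ at $f(\zeta_0)$, whose existence and uniform radius are condition (2). The Schwarz lemma applied to $f$ versus the inclusion $B\hookrightarrow D$ in the form of the Hopf-type lemma at the boundary point $f(\zeta_0)$ yields a uniform lower bound on the normal derivative of $f$: precisely, comparing $k_D$ with $k_B$ along the radius and using that $k_B(w,f(t\zeta_0))=p\big(0,\tfrac{|f(t\zeta_0)-w|}{1/c}\big)$ together with $k_D(f(0),f(t\zeta_0))=p(0,t\zeta_0)$ (Corollary~\ref{5}), one gets $1-\tfrac{|f(t\zeta_0)-w|^2}{(1/c)^2}\gtrsim (1-t)$ with a uniform constant; but $(1/c)^2-|f(t\zeta_0)-w|^2$ is (up to the uniform factor $2/c$) the distance of $f(t\zeta_0)$ to $\partial B$, which is at most $\dist(f(t\zeta_0),\partial D)$ plus a term of order $(1-t)^2$. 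Letting $t\to1$ and dividing by $1-t$ gives $\liminf_{t\to1}\frac{\dist(f(t\zeta_0),\partial D)}{1-t}\gtrsim 1$ uniformly, and then the computation of the preceding paragraph (now read as an equality in the limit, using that $f$ hits $\partial D$ transversally with the one-sided derivative equal to $|\nabla r(f(\zeta_0))|\rho(\zeta_0)^{-1}$) yields $\rho(\zeta_0)^{-1}>C_1$ with $C_1>0$ uniform.

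\emph{Main obstacle.} The delicate point throughout is making the passage between three a priori different ``sizes'' — the boundary derivative $\rho^{-1}=\langle\zeta f',\nu\rangle$, the normal decay rate of $r\circ f$, and the rate $\dist(f(t\zeta),\partial D)/(1-t)$ — genuinely uniform in $(D,z)\in\mathcal D(c)$ rather than merely for a fixed domain; this is exactly what the geometric conditions (2) and (5) of Definition~\ref{30} are designed to furnish (a uniform two-sided comparison of $-r$, the Euclidean distance, and the distance to an interior osculating ball on a neighborhood of $\partial D$ of uniform size), so the proof is essentially a bookkeeping of these comparisons plus the two Schwarz-lemma inequalities above. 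The transversality fact $C=\rho^{-1}>0$ is already in hand from Proposition~\ref{4}, so no positivity needs to be re-proved — only the uniform quantitative bounds.
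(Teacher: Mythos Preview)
Your upper bound argument is correct and in fact cleaner than the paper's. Writing $r$ for the signed boundary distance (so $|\nabla r|=1$ on $\partial D$ and $-r=\dist(\cdot,\partial D)$ in $D$), the chain rule gives
\[
\rho(\zeta_0)^{-1}=\frac{d}{dt}\,r\big(f(t\zeta_0)\big)\Big|_{t=1}
=\lim_{t\to1^-}\frac{\dist(f(t\zeta_0),\partial D)}{1-t}\le C
\]
directly from Proposition~\ref{7}. The paper reaches the same conclusion by first passing through the Forn\ae ss map of condition~(4), embedding $\Phi(D)$ in a half-space and applying the Schwarz lemma on the half-plane to bound $|h_1'|$; your route avoids condition~(4) altogether for this half of the estimate.

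Your lower bound, however, has a genuine gap: the inequality coming from the interior ball goes the wrong way. From $B\subset D$ you only get $k_D\le k_B$, so the triangle inequality together with $k_D(z,f(t\zeta_0))=p(0,t)$ yields
\[
p(0,t)\le k_D(z,w)+k_B\big(w,f(t\zeta_0)\big),
\]
which gives an \emph{upper} bound $1-c|f(t\zeta_0)-w|\lesssim(1-t)$, i.e.\ $\dist(f(t\zeta_0),\partial B)\lesssim(1-t)$, not the lower bound you claim. There is no comparison of Kobayashi distances available from $B\subset D$ that bounds $k_B$ from above, and moreover there is no a priori reason $f(t\zeta_0)$ even lies in $B$ for $t<1$ close to $1$ without already knowing $\rho^{-1}>0$ quantitatively. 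What is needed is a \emph{plurisubharmonic} barrier: the paper uses condition~(6) to make $\varrho(w)=-\log(\varepsilon-r(w))+\log\varepsilon$ plurisubharmonic on the $\varepsilon$-hull, so that $v=\varrho\circ f$ is subharmonic with $v=0$ on $\TT$; then the Hadamard-three-circles convexity of $x\mapsto\max_{|\zeta|=e^x}v(\zeta)$, together with the uniform negativity $v\le -C_5$ on $|\zeta|\le 1/(2c^2)$ (coming from condition~(1) and the Cauchy estimate), forces $\tfrac{d}{dx}v(\zeta_0 e^x)\big|_{x=0}\ge C_5/\log(2c^2)$, which unwinds to $\rho(\zeta_0)^{-1}\ge C_1$. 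Your outline never invokes any plurisubharmonicity and in particular makes no use of condition~(6); that is the missing ingredient.
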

\begin{proof} For the upper estimate fix $\zeta_0\in\TT$. Set $B:=B_n(f(\zeta_0),1/c)$ and let $\Phi\in\OO(\overline{D},\mathbb{C}^n)$ be as in the condition (4) of Definition~\ref{30} for $B$. One can assume that $f(\zeta_0)=\Phi(f(\zeta_0))=0$ and $\nu_D(0)=\nu_{\Phi(D)}(0)=(1,0,\ldots,0)$. Then $\Phi(D)$ is contained in the half-space $\{w\in\mathbb{C}^n:\re w_1<0\}$. Putting $h:=\Phi\circ f$ we have $$h_1(\mathbb{D})\subset\{w_1\in\mathbb{C}:\re w_1<0\}.$$ In virtue of the Schwarz Lemma on the half-plane
\begin{equation}\label{schh1}|h_1'(t\zeta_0)|\leq\frac{-2\re h_1(t\zeta_0)}{1-|t\zeta_0|^2}.\end{equation}

Let $\delta$ be the signed boundary distance of $\Phi(D)$, i.e. $$\delta(x):=\begin{cases}-\dist(x,\partial\Phi(D)),\ x\in\Phi(D)\\\ \ \ \dist(x,\partial\Phi(D)),\ x\notin\Phi(D).\end{cases}$$ It is a defining function of $\Phi(D)$ in a neighborhood of $0$ (recall that $\Phi^{-1}(\Phi(B))=B$). Observe that $$\delta(x)=\delta(0)+\re\langle\nabla\delta(0), x\rangle+O(|x|^2)=\re x_1+O(|x|^2).$$

If $x\in\Phi(D)$ tends transversally to $0$, then the angle between the vector $x$ and the hyperplane $\{w\in\mathbb{C}^n:\re w_1=0\}$ is separated from $0$, i.e. its sinus $(-\re x_1)/|x|>\varepsilon$ for some $\varepsilon>0$ independent on $x$. Thus $$\frac{\delta(x)}{\re x_1}=1+O(|x|)\text{ as }x\to 0\text{ transversally. }$$ Consequently \begin{equation}\label{50}-\re x_1\leq 2\dist(x,\partial\Phi(D))\text{ as }x\to 0\text{ transversally. }\end{equation}

We know that $t\longmapsto f(t\zeta_0)$ hits $\partial D$ transversally. Therefore, $t\longmapsto h(t\zeta_0)$ hits $\partial \Phi(D)$ transversally, as well. Indeed, we have \begin{multline}\label{hf}\left\langle\left.\frac{d}{dt}h(t\zeta_0)\right|_{t=1},\nu_{\Phi(D)}(h(\zeta_0))\right\rangle=\left\langle \Phi'(0)f'(\zeta_0)\zeta_0,\frac{(\Phi^{-1})'(0)^*\nabla r(0)}{|(\Phi^{-1})'(0)^*\nabla r(0)|}\right\rangle=\\=\frac{\langle\zeta_0 f'(\zeta_0),\nabla r(0)\rangle}{|(\Phi'(0)^{-1})^*\overline{\nabla r(0)}|}=\frac{\langle\zeta_0 f'(\zeta_0),\nu_D(f(\zeta_0))|\nabla r(0)|\rangle}{|(\Phi'(0)^{-1})^*\overline{\nabla r(0)}|}.
\end{multline}
where $r$ is a defining function of $D$. In particular,
\begin{multline*} \re \left\langle\left.\frac{d}{dt}h(t\zeta_0)\right|_{t=1},\nu_{\Phi(D)}(h(\zeta_0))\right\rangle=\re \frac{\langle\zeta_0 f'(\zeta_0),\nu_D(f(\zeta_0))|\nabla r(0)|\rangle}{|(\Phi'(0)^{-1})^*\overline{\nabla r(0)}|}=\\=\frac{\rho(\zeta_0)^{-1}|\nabla r(0)|}{|(\Phi'(0)^{-1})^*\overline{\nabla r(0)}|}\neq 0.\end{multline*}  This proves that $t\longmapsto h(t\zeta_0)$ hits $\partial\Phi(D)$ transversally.

Consequently, we may put $x=h(t\zeta_0)$ into \eqref{50} to get \begin{equation}\label{hf1}\frac{-2\re h_1(t\zeta_0)}{1-|t\zeta_0|^2}\leq\frac{4\dist(h(t\zeta_0),\partial\Phi(D))}
{1-|t\zeta_0|^2},\ t\to 1.\end{equation}
But $\Phi$ is a biholomorphism near $0$, so \begin{equation}\label{nfr}\frac{4\dist(h(t\zeta_0),\partial\Phi(D))}{1-|t\zeta_0|^2}\leq C_3\frac{\dist(f(t\zeta_0),\partial D)}{1-|t\zeta_0|},\ t\to 1,\end{equation} where $C_3$ is a uniform constant depending only on $c$ (thanks to the condition (4)(c) of Definition~\ref{30}). By Proposition \ref{7}, the term on the right side of~\eqref{nfr} does not exceed some uniform constant.

It follows from \eqref{hf} that \begin{multline*}\rho(\zeta_0)^{-1}=|\langle f'(\zeta_0)\zeta_0,\nu_D(f(\zeta_0))\rangle|\leq C_4|\langle h'(\zeta_0), \nu_{\Phi(D)}(h(\zeta_0))\rangle|=\\=C_4|h_1'(\zeta_0)|=\lim_{t\to 1}C_4|h_1'(t\zeta_0)|\end{multline*} with a uniform $C_4$ (here we use the condition (4)(c) of Definition~\ref{30} again).
Combining \eqref{schh1}, \eqref{hf1} and \eqref{nfr} we get the upper estimate for $\rho(\zeta_0)^{-1}.$

Now we are proving the lower estimate. Let $r$ be the signed boundary distance to $\partial D$. For $\varepsilon=1/c$ the function $$\varrho(w):=-\log(\varepsilon-r(w))+\log\varepsilon,\ w\in
D_\varepsilon,$$ where $D_\varepsilon$ is an $\varepsilon$-hull of $D$, is plurisubharmonic and defining for $D$. Indeed, we have $$-\log(\varepsilon-r(w))=-\log\dist(w,\partial D_\varepsilon),\ w\in D_\varepsilon$$ and $D_\varepsilon$ is pseudoconvex.

Therefore, a function $$v:=\varrho\circ f:\overline{\mathbb{D}}\longrightarrow(-\infty,0]$$ is subharmonic on $\DD$. Moreover, since $f$ maps $\TT$ in $\partial D$ we infer that $v=0$ on $\TT$. Moreover, since $|f(\lambda)-z|<c$ for $\lambda\in\mathbb{D}$, we have $$|f(\lambda)-z|<\frac{1}{2c}\text{ if }|\lambda|\leq\frac{1}{2c^2}.$$ Therefore, for a fixed $\zeta_0\in\TT$ $$M_{\zeta_0}(x):=\max_{t\in[0,2\pi]}v(\zeta_0 e^{x+it})\leq-\log\left(1+\frac{1}{2c\varepsilon}\right)=:-C_5\text{ if }x\leq-\log(2c^2).$$ Since $M_{\zeta_0}$ is convex for $x\leq 0$ and $M_{\zeta_0}(0)=0$, we get $$v(\zeta_0 e^x)\leq M_{\zeta_0}(x)\leq\frac{C_5x}{\log(2c^2)}\text{\ \ \
for \ }-\log(2c^2)\leq x\leq 0.$$ Hence (remember that $v(\zeta_0)=0$)
\begin{multline}\label{wk}\frac{C_5}{\log(2c^2)}\leq\left.\frac{d}{dx}v(\zeta_0 e^x)\right|_{x=0}=\sum_{j=1}^n\frac{\pa\varrho}{\pa z_j}(f(\zeta_0))f_j'(\zeta_0)\zeta_0=\\=\langle\zeta_0 f'(\zeta_0),\nabla\varrho(f(\zeta_0))\rangle=\rho(\zeta_0)^{-1}|\nabla\varrho(f(\zeta_0))|.\end{multline}
Moreover,
\begin{multline*}|\nabla\varrho(f(\zeta_0))|= \left\langle\nabla\varrho(f(\zeta_0)),\frac{\nabla\varrho(f(\zeta_0))}{|\nabla\varrho(f(\zeta_0))|}\right\rangle_\RR
=\langle\nabla\varrho(f(\zeta_0)),\nu_D(f(\zeta_0))\rangle_\RR=\\=\frac{\partial\varrho}{\partial\nu_D}(f(\zeta_0))=\lim_{t\to 0}\frac{\varrho(f(\zeta_0)+t\nu_D(f(\zeta_0)))-\varrho(f(\zeta_0))}{t}=\frac{1}{\varepsilon}=c,
\end{multline*} as $r(a+t\nu(a))=t$ if $a\in \partial D$ and $t\in\mathbb R$ is small enough.
This, together with \eqref{wk}, finishes the proof of the lower estimate.
\end{proof}

\begin{prop}\label{8}
Let $f:(\DD,0)\longrightarrow (D,z)$ be an $E$-mapping.
Then $$|f(\zeta_1)-f(\zeta_2)|\leq C\sqrt{|\zeta_1-\zeta_2|},\ \zeta_1,\zeta_2\in\CDD,$$ where $C$ is uniform if $(D,z)\in\mathcal{D}(c)$.
\end{prop}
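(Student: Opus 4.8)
The plan is to upgrade the Hölder-$\tfrac12$ bound from the case where one of the points lies on $\TT$ to the general case, and for the boundary case to exploit the subharmonic machinery already set up in Proposition~\ref{7} and~\ref{9}. First I would reduce to estimating $|f(\zeta_0)-f(\zeta)|$ for $\zeta_0\in\TT$ and $\zeta\in\CDD$, since the general estimate for $\zeta_1,\zeta_2$ follows: if $|\zeta_1-\zeta_2|$ is comparable to $1-|\zeta_1|$ or $1-|\zeta_2|$ one compares to a nearby boundary point, and if both $\zeta_1,\zeta_2$ are deep inside $\DD$ (so that $1-|\zeta_j|\gtrsim|\zeta_1-\zeta_2|$) one uses that $f$ is uniformly Lipschitz on compact subsets via a Cauchy-estimate argument. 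Actually the cleanest route is: fix $\zeta_0\in\TT$, write $\zeta=\zeta_0 e^{x+it}$ with $x\le 0$ and $|t|$ small, and control $|f(\zeta)-f(\zeta_0)|$ by the radial decay of $\dist(f(\zeta),\pa D)$ (Proposition~\ref{7}) together with the tangential derivative bounds that come from Proposition~\ref{9}.

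The key step I expect to be: a good estimate on the radial derivative $|f'(\zeta_0 e^x)|$ or, equivalently, on the boundary Cauchy-type integral. Concretely, I would use that for $w\in D$ near $\pa D$ one has $\langle w-f(\zeta),\nu_D(f(\zeta))\rangle\neq 0$ and the left-inverse $F$ from Proposition~\ref{1}; but more directly, since $f$ extends smoothly across $\TT$ near each point in the real-analytic case, the real obstacle is making the bound \emph{uniform} over $(D,z)\in\mathcal D(c)$. To get uniformity I would argue as in Proposition~\ref{9}: transport to a model half-space via the biholomorphism $\Phi$ from condition (4) of Definition~\ref{30}, apply the Schwarz–Pick lemma for the half-plane to the first coordinate $h_1=(\Phi\circ f)_1$ to bound $|h_1'(t\zeta_0)|$ by $-2\re h_1(t\zeta_0)/(1-|t\zeta_0|^2)$, and then bound $\re h_1$ by $\dist(f,\pa D)\le C(1-|\zeta|)$ from Proposition~\ref{7}; the remaining $2n-1$ coordinates of $h$ need a tangential bound, which is exactly where the lower bound $\rho^{-1}>C_1$ from Proposition~\ref{9} (equivalently, the transversality being bounded away from $0$) is used to control the ``angular'' part.

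Once the derivative-type estimate $|f'(\zeta)|\le C/\sqrt{1-|\zeta|}$ (or the equivalent integrated statement) is in hand, the Hölder bound follows by integration: for $\zeta_1,\zeta_2\in\DD$ connect them by a path consisting of a radial segment from $\zeta_1$ toward $\TT$ down to the circle of radius $\max(|\zeta_1|,|\zeta_2|,1-|\zeta_1-\zeta_2|)$, an arc along that circle, and a radial segment back to $\zeta_2$; the radial pieces contribute $\int_{1-\delta}^{1}\frac{ds}{\sqrt{1-s}}\asymp\sqrt\delta$ with $\delta\asymp|\zeta_1-\zeta_2|$, and the arc has length $\lesssim|\zeta_1-\zeta_2|$ while the integrand is $\lesssim|\zeta_1-\zeta_2|^{-1/2}$, again giving $\lesssim\sqrt{|\zeta_1-\zeta_2|}$. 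The boundary values $\zeta_1,\zeta_2\in\TT$ are then handled by taking limits, using continuity of $f$ on $\CDD$.

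The main obstacle, I expect, is the uniformity of the tangential derivative estimate: bounding $|h'(\zeta_0)|$ (not just its normal component $|h_1'(\zeta_0)|$) by a constant depending only on $c$. The normal component is immediate from Schwarz–Pick plus Proposition~\ref{7}, but the tangential components require knowing that $f$ does not ``oscillate too fast'' along $\TT$ — morally this is a gradient estimate for the harmonic (vector-valued) function $f$ on $\DD$ whose boundary trace lies in $\pa D$, and the honest way to get it uniformly is to combine the subharmonicity of $\varrho\circ f$ (as in Proposition~\ref{9}) with the reflection/ellipticity that the equations defining a stationary map provide near $\TT$, extracting a uniform modulus of continuity. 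I would organize this as: (i) a uniform bound on $\sup_\DD|f'|$ restricted to $\{|\zeta|\le 1/2\}$ via Cauchy estimates on $B_n(z,1/c)$; (ii) the uniform normal-derivative bound near $\TT$ from Schwarz–Pick and Propositions~\ref{7},~\ref{9}; (iii) an interpolation/Hadamard-three-lines argument on $M_{\zeta_0}(x):=\max_t|f(\zeta_0 e^{x+it})-f(\zeta_0)|$ using its convexity in $x$, exactly paralleling the convexity argument in the proof of Proposition~\ref{9}, to conclude $M_{\zeta_0}(x)\le C\sqrt{-x}\le C\sqrt{1-|\zeta|}$.
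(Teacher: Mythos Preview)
Your overall architecture---prove $|f'(\zeta)|\le C/\sqrt{1-|\zeta|}$ on $\DD$ and then invoke Hardy--Littlewood (Theorems~\ref{lit1}, \ref{lit2})---is exactly what the paper does. The gap is in how you obtain the derivative estimate. You split $h'=(\Phi\circ f)'$ into its normal component $h_1'$ (handled by the half-plane Schwarz--Pick and Proposition~\ref{7}) and the $2n-1$ tangential components, and then assert that the tangential part is controlled by the lower bound $\rho^{-1}>C_1$ from Proposition~\ref{9} or by a convexity argument on $M_{\zeta_0}(x)=\max_t|f(\zeta_0 e^{x+it})-f(\zeta_0)|$. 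Neither works. The quantity $\rho(\zeta)^{-1}=\langle\zeta f'(\zeta),\nu_D(f(\zeta))\rangle$ is itself the \emph{normal} projection of $f'$; a lower bound on it says nothing about the size of the tangential part of $f'$. And your $M_{\zeta_0}$ is a maximum over the full circle $\{|\zeta|=e^x\}$, so $M_{\zeta_0}(0)$ is the diameter of $f(\TT)$, not $0$; even granting convexity in $x$ you get at best a linear bound $M_{\zeta_0}(x)\le C|x|$, which does not yield $\sqrt{-x}$ and in any case does not localise near $\zeta_0$.

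The paper avoids the normal/tangential split entirely by using the \emph{exterior ball} in condition~(4)(a) of Definition~\ref{30} rather than a half-space. After precomposing $\Phi\circ f$ with the M\"obius map sending $0$ to $\zeta_0$, one has a holomorphic $h:\DD\to B_n(w_0,c)$, and the ball Schwarz Lemma (Lemma~\ref{schw}) gives
\[
|h'(0)|\le\sqrt{c^2-|h(0)-w_0|^2}\le\sqrt{2c}\,\sqrt{\dist\bigl(\Phi(f(\zeta_0)),\partial\Phi(D)\bigr)},
\]
bounding the \emph{full} derivative, all components at once. Unwinding the M\"obius map and $\Phi'$ (uniformly controlled by~(4)(c)) and applying Proposition~\ref{7} then gives $|f'(\zeta_0)|\le C/\sqrt{1-|\zeta_0|}$ for $|\zeta_0|$ close to $1$; the interior is handled by the maximum principle. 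So the missing idea in your argument is precisely the exterior-ball Schwarz Lemma, which replaces your unproved tangential estimate.
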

\begin{proof}
Let $\zeta_0\in\DD$ be such that $1-|\zeta_0|<1/(cC)$, where $C$ is as in Proposition~\ref{7}. Then $B:=B_n(f(\zeta_0),1/c)$ intersects $\partial D$. Take $\Phi$ for the ball $B$ from the condition (4) of Definition~\ref{30}. Let $w$ denote the nearest point to $\Phi(f(\zeta_0))$ lying on $\partial\Phi(D)$.  From the conditions (4)(b)-(c) of Definition~\ref{30} we find that there is a uniform constant $r<1$ such that the point $w$ belongs to $\Phi(B\cap\partial D)$ provided that $|\zeta_0|\geq r$.

From the condition (4)(a) of Definition~\ref{30} we get that there is $w_0$ such that $\Phi(D)\subset B_n(w_0,c)$ and the ball $B_n(w_0,c)$ is tangent to $\Phi(D)$ at $w$. Let $$h(\zeta):=(\Phi\circ f)\left(\frac{\zeta_0-\zeta}{1-\overline{\zeta_0}\zeta}\right),\ \zeta\in\DD.$$
Then $h$ is holomorphic, $h(\DD)\subset B_n(w_0,c)$ and $h(0)=\Phi(f(\zeta_0))$. Using Lemma \ref{schw} we get \begin{multline*}|h'(0)|\leq\sqrt{c^2-|h(0)-w_0|^2}\leq\sqrt{2c(c-|\Phi(f(\zeta_0))-w_0|)}=\\
=\sqrt{2c(|w_0-w|-|\Phi(f(\zeta_0))-w_0|)}\leq\sqrt{2c}\sqrt{|\Phi(f(\zeta_0))-w|}=\\
=\sqrt{2c}\sqrt{\dist(\Phi(f(\zeta_0)),\partial\Phi(D))}.\end{multline*} Since $$h'(0)=\Phi'(f(\zeta_0))f'(\zeta_0)\left.\frac{d}{d\zeta}\frac{\zeta_0-\zeta}{1-\overline{\zeta_0}\zeta}\right|_{\zeta=0},$$ bby the condition (4)(c) of Definition~\ref{3} we get $$|h'(0)|\geq C_1|f'(\zeta_0)|(1-|\zeta_0|^2)$$ with a uniform $C_1$, so $$|f'(\zeta_0)|\leq\frac{|h'(0)|}{C_1(1-|\zeta_0|^2)}\leq\frac{\sqrt{2c}}{C_1}\frac{\sqrt{\dist(\Phi(f(\zeta_0)),\partial\Phi(D))}}{1-|\zeta_0|^2}\leq C_2\frac{\sqrt{\dist(f(\zeta_0),\partial D)}}{1-|\zeta_0|^2},$$ where $C_2$ is uniform. Combining with Proposition \ref{7} \begin{equation}\label{46}|f'(\zeta_0)|\leq C_3\frac{\sqrt{1-|\zeta_0|}}{1-|\zeta_0|^2}=\frac{C_3}{\sqrt{1-|\zeta_0|}},\end{equation} where a constant $C_3$ is uniform.

We have shown that \eqref{46} holds for $r\leq |\zeta_0|<1$ with a uniform $r<1$. For $|\zeta_0|<r$ we estimate in the following way $$|f'(\zeta_0)|\leq\max_{|\zeta|=r}|f'(\zeta)|\leq\frac{C_3}{\sqrt{1-r}}\leq\frac{C_4}{\sqrt{1-|\zeta_0|}}$$ with a uniform $C_4:=C_3/\sqrt{1-r}$.

Using Theorems \ref{lit1} and \ref{lit2} with $\alpha=1/2$ we finish the proof.
\end{proof}

\begin{prop}\label{10a}
Let $f:(\DD ,0)\longrightarrow (D,z)$ be an $E$-mapping.
Then $$|\rho(\zeta_1)-\rho(\zeta_2)|\leq C\sqrt{|\zeta_1-\zeta_2|},\ \zeta_1,\zeta_2\in\TT,$$ where
$C$ is uniform if $(D,z)\in\mathcal{D}(c)$.
\end{prop}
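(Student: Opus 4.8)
The plan is to derive Hölder continuity of $\rho$ from the Hölder continuity of $f$ (Proposition~\ref{8}) together with the uniform two-sided bounds on $\rho^{-1}$ (Proposition~\ref{9}), by exploiting the normalization \eqref{rho} and the left inverse $F$ supplied by Propositions~\ref{1} and~\ref{34}. Recall that $\rho(\zeta)^{-1}=\langle\zeta f'(\zeta),\nu_D(f(\zeta))\rangle=\zeta f'(\zeta)\bullet\widetilde f(\zeta)$ on $\TT$, and that on the neighborhood $\DD_f$ the holomorphic function $G(z,\zeta)=(z-f(\zeta))\bullet\widetilde f(\zeta)$ satisfies $G(f(\zeta),\zeta)\equiv 0$; differentiating in $\zeta$ along the diagonal gives $f'(\zeta)\bullet\widetilde f(\zeta)+(f(\zeta)-f(\zeta))\bullet\widetilde f{}'(\zeta)$, and in fact Proposition~\ref{4} already tells us $f'\bullet\widetilde f\equiv 1$ on all of $\DD_f$, so $\zeta/\rho(\zeta)=\zeta f'(\zeta)\bullet\widetilde f(\zeta)$ extends holomorphically. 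Thus the task reduces to proving a uniform Hölder estimate for $f'\bullet\widetilde f$ restricted to $\TT$ — but since this product is \emph{constant} equal to $1$, the real content is instead to control $\widetilde f$ on $\TT$, since $\rho$ itself is recovered from $\widetilde f$ via $\rho(\zeta)=|\widetilde f(\zeta)|/|\nabla r|$-type expressions; more precisely $\overline{\nu_D(f(\zeta))}=\widetilde f(\zeta)/(\zeta\rho(\zeta))$ forces $\rho(\zeta)=|\widetilde f(\zeta)|$ since $|\nu_D|=1$.

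Concretely, I would proceed as follows. First, write $\rho(\zeta)=|\widetilde f(\zeta)|$ for $\zeta\in\TT$, using $|\overline{\nu_D(f(\zeta))}|=1$ and the normalization; so it suffices to bound $|\widetilde f(\zeta_1)-\widetilde f(\zeta_2)|$ by $C\sqrt{|\zeta_1-\zeta_2|}$ uniformly, because $\big||\widetilde f(\zeta_1)|-|\widetilde f(\zeta_2)|\big|\le|\widetilde f(\zeta_1)-\widetilde f(\zeta_2)|$. Second, I would obtain a uniform Cauchy-type estimate on $\widetilde f{}'$ in $\DD$ of the form $|\widetilde f{}'(\zeta_0)|\le C/\sqrt{1-|\zeta_0|}$, exactly mirroring \eqref{46}. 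This is where Proposition~\ref{9} enters: near a boundary point $\zeta_0$, the relation $\widetilde f(\zeta)=\zeta\rho(\zeta)\overline{\nu_D(f(\zeta))}$ on $\TT$ shows $|\widetilde f|$ is bounded above and below by uniform constants on $\TT$; combined with $f'\bullet\widetilde f\equiv 1$ one gets that $\widetilde f$ does not vanish and $|\widetilde f(\zeta_0)|$ is comparable to $1/|f'(\zeta_0)|$ up to the angle between $f'$ and $\nu_D$, which Proposition~\ref{9} again controls. Third, once the gradient bound $|\widetilde f{}'(\zeta_0)|\le C/\sqrt{1-|\zeta_0|}$ is in hand, I apply the Hardy--Littlewood-type theorems (Theorems~\ref{lit1}, \ref{lit2}) with $\alpha=1/2$ — the same machinery used at the end of the proof of Proposition~\ref{8} — to conclude $\widetilde f\in\cC^{1/2}(\CDD)$ with uniform seminorm, hence $\rho\in\cC^{1/2}(\TT)$ with uniform seminorm.

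The main obstacle I anticipate is the derivative estimate for $\widetilde f$ in the interior: unlike $f$, whose derivative bound \eqref{46} came from mapping into an explicit ball $B_n(w_0,c)$ via the Schwarz-type Lemma~\ref{schw}, the auxiliary mapping $\widetilde f$ has no a priori bounded target. I would get around this by expressing $\widetilde f$ locally in terms of $f$ and $\nu_D\circ f$: since $\nu_D$ is a $\cC^1$ (indeed uniformly Lipschitz, by condition~(5) of Definition~\ref{30}) function of the boundary point, and $\overline{\nu_D(f(\zeta))}=\widetilde f(\zeta)/(\zeta\rho(\zeta))$, one can write, for $\zeta$ near $\TT$, a formula for $\widetilde f$ whose ingredients are $f$, $f'$, and a defining function $r$ of $D$; then the Hölder bound for $f$ (Proposition~\ref{8}), the bound for $f'$ (\eqref{46}), and the uniform bounds on $\rho^{-1}$ (Proposition~\ref{9}) feed into a uniform bound for $\widetilde f{}'$. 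Alternatively, and perhaps more cleanly, one notes that $\zeta\mapsto\zeta/\rho(\zeta)=\zeta f'(\zeta)\bullet\widetilde f(\zeta)$ is \emph{holomorphic} on $\DD_f$ by Proposition~\ref{6}/Definition~\ref{21}, and combine this with the already-established $\cC^{1/2}$ regularity and uniform bounds on $f$, $f'$ to transfer the estimate — the subtlety being to keep every constant depending only on $c$, which is the point of working inside the class $\mc D(c)$.
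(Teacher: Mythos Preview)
Your reduction $\rho(\zeta)=|\widetilde f(\zeta)|$ on $\TT$ is correct, and so is the observation that a uniform $\cC^{1/2}$ bound on $\widetilde f$ would finish the job. But the circularity you anticipate is real and neither proposed workaround breaks it. Workaround~(a) --- expressing $\widetilde f$ via $f$ and $\nu_D\circ f$ --- only makes sense on $\TT$, where $\widetilde f(\zeta)=\zeta\rho(\zeta)\overline{\nu_D(f(\zeta))}$; differentiating tangentially then produces a $\rho'$ term, which is precisely what you do not yet control. Workaround~(b) is empty: by Proposition~\ref{4} the product $f'\bullet\widetilde f$ is the \emph{constant} $1$, so it carries no information about $\widetilde f$ or $\rho$ separately. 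And the bare fact that $|\widetilde f|$ is uniformly bounded on $\TT$ (hence on $\CDD$ by the maximum principle) gives only $|\widetilde f{}'(\zeta_0)|\le C/(1-|\zeta_0|)$ via Cauchy, not the needed $C/\sqrt{1-|\zeta_0|}$; the square-root improvement for $f$ in \eqref{46} came from the exterior-ball geometry of $\Phi(D)$ together with Lemma~\ref{schw}, and $\widetilde f$ has no analogous target.

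The paper breaks the circle by a different mechanism. It works locally near a fixed $\zeta_1\in\TT$, normalizes so that $\nu_{D,1}(f(\zeta_1))=1$, and uses a cutoff to build a function $\varphi$ close to $1$ on all of $\TT$ with $\varphi=\overline{\nu_{D,1}\circ f}$ near $\zeta_1$; this $\varphi$ is uniformly $\cC^{1/2}$ by Proposition~\ref{8} and condition~(5) of Definition~\ref{30}. Then $\log\varphi$ is well-defined, $\im\log\varphi$ is extended harmonically to $\DD$, and Privalov's Theorem~\ref{priv} produces a holomorphic $h$, uniformly $\cC^{1/2}$ on $\CDD$, with $\im h=\im\log\varphi$ on $\TT$. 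The key point is that $G(\zeta):=\widetilde f_1(\zeta)e^{-h(\zeta)}/\zeta$ is then \emph{real} on an arc of $\TT$ around $\zeta_1$, so the Reflection Principle extends $G$ holomorphically across that arc into a uniformly bounded region; Cauchy estimates there give uniform Lipschitz (hence $\cC^{1/2}$) control of $G$. Since on that arc $\rho=Ge^{h}/\overline{\nu_{D,1}\circ f}$ is a product/quotient of uniformly $\cC^{1/2}$ functions with the denominator bounded away from zero, the estimate follows. The missing ingredient in your plan is this Privalov-plus-reflection trick: by first killing the phase one manufactures the extra room needed to run Cauchy estimates, which is what replaces the unavailable Schwarz-type bound for $\widetilde f$. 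Note, incidentally, that the paper proves the $\cC^{1/2}$ bound for $\widetilde f$ (Proposition~\ref{10b}) \emph{after} the bound for $\rho$, using \ref{8} and \ref{10a} as input --- exactly the opposite order from what you propose.
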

\begin{proof}It suffices to prove that there exist uniform $C,C_1>0$ such that $$|\rho(\zeta_1)-\rho(\zeta_2)|\leq C\sqrt{|\zeta_1-\zeta_2|},\ \zeta_1,\zeta_2\in\TT,\ |\zeta_1-\zeta_2|<C_1.$$

Fix $\zeta_1\in\TT$. Without loss of generality we may assume that $\nu_{D,1}(f(\zeta_1))=1$. Let $0<C_1\leq 1/4$ be uniform and such that $$|\nu_{D,1}(f(\zeta))-1|<1/2,\ \zeta\in\TT\cap B_n(\zeta_1,3C_1).$$ It is possible, since by Proposition \ref{8} $$|{\nu_D(f(\zeta))}-{\nu_D(f(\zeta'))}|\leq 2c|f(\zeta)-f(\zeta')|\leq C'\sqrt{|\zeta-\zeta'|},\ \zeta,\zeta'\in\TT,$$ with a uniform $C'>0$. There exists a function $\psi\in\cC^1(\TT,[0,1])$ such that $\psi=1$ on $\TT\cap B_n(\zeta_1,2C_1)$ and $\psi=0$ on $\TT\setminus B_n(\zeta_1,3C_1)$. Then the function $\phi:\TT\longrightarrow\CC$ defined by $$\varphi:=(\overline{\nu_{D,1}\circ f}-1)\psi+1$$ satisfies
\begin{enumerate}
\item $\varphi(\zeta)=\overline{\nu_{D,1}(f(\zeta))}$, $\zeta\in\TT\cap B_n(\zeta_1,2C_1)$;
\item $|\varphi(\zeta)-1|<1/2$, $\zeta\in\TT$;
\item $\phi$ is uniformly $1/2$-H\"older continuous on $\TT$, i.e. it is $1/2$-H\"older continuous with a uniform constant (remember that $\psi$ was chosen uniformly).
\end{enumerate}

First observe that $\log\varphi$ is well-defined. Using using properties listed above we deduce that $\log\varphi$ and $\im\log\varphi$ are uniformly $1/2$-H\"older continuous on $\TT$, as well. The function $\im\log\varphi$ can be extended continuously to a function $v:\CDD\longrightarrow\RR$, harmonic in $\DD$. There is a function $h\in\mathcal O(\DD)$ such that $v=\im h$ in $\DD$. Taking $h-\re h(0)$ instead of $h$, one can assume that $\re h(0)=0$. By Theorem \ref{priv} applied to $ih$, we get that the function $h$ extends continuously on $\CDD$ and $h$ is uniformly $1/2$-H\"older continuous in $\CDD$. Hence the function $u:=\re h:\CDD\longrightarrow\RR$ is uniformly $1/2$-H\"older continuous in $\CDD$ with a uniform constant $C_2$. Furthermore, $u$ is uniformly bounded in $\CDD$, since $$|u(\zeta)|=|u(\zeta)-u(0)|\leq C_2\sqrt{|\zeta|},\ \zeta\in\CDD.$$

Let $g(\zeta):=\wi{f}_1(\zeta)e^{-h(\zeta)}$ and $G(\zeta):=g(\zeta)/\zeta$. Then $g\in\mathcal O(\DD)\cap\mathcal C(\overline{\DD})$ and $G\in\mathcal O(\DD_*)\cap\mathcal C((\overline{\DD})_*)$. Note that for $\zeta\in\TT$ $$|g(\zeta)|=|\zeta
\rho(\zeta)\overline{\nu_{D,1}(f(\zeta))}e^{-h(\zeta)}|\leq\rho(\zeta)e^{-u(\zeta)},$$ which, combined with
Proposition \ref{9}, the uniform boundedness of $u$ and the maximum principle, gives a uniform boundedness of $g$ in $\CDD$. The function $G$ is uniformly bounded in $\overline{\DD}\cap B_n(\zeta_1,2C_1)$. Moreover, for $\zeta\in\TT\cap B_n(\zeta_1,2C_1)$ \begin{eqnarray*} G(\zeta)&=&\rho(\zeta)\overline{\nu_{D,1}(f(\zeta))}e^{-u(\zeta)-i\im\log \phi(\zeta)}=\\&=&\rho(\zeta)\overline{\nu_{D,1}(f(\zeta))}e^{-u(\zeta)+\re\log\phi(\zeta)}e^{-\log\phi(\zeta)}
=\rho(\zeta)e^{-u(\zeta)+\re\log\phi(\zeta)}\in\mathbb{R}.\end{eqnarray*} By the Reflection Principle one can extend $G$ holomorphically past $\TT\cap B_n(\zeta_1,2C_1)$ to a function (denoted by the same letter) uniformly bounded in $B_n(\zeta_1,2C_2)$, where a constant $C_2$ is uniform. Hence, from the Cauchy formula, $G$ is uniformly Lipschitz continuous in $B_n(\zeta_1,C_2)$, consequently uniformly $1/2$-H\"older continuous in $B_n(\zeta_1,C_2)$.

Finally, the functions $G$, $h$, $\nu_{D,1}\circ f$ are uniformly $1/2$-H\"older continuous on $\TT\cap B_n(\zeta_1,C_2)$, $|\nu_{D,1}\circ f|>1/2$ on $\TT\cap B_n(\zeta_1,C_2)$, so the function $\rho=Ge^h/\overline{\nu_{D,1}\circ f}$ is uniformly $1/2$-H\"older continuous on $\TT\cap B_n(\zeta_1,C_2)$.
\end{proof}

\begin{prop}\label{10b}
Let $f:(\DD,0)\longrightarrow (D,z)$ be an $E$-mapping.
Then $$|\wi{f}(\zeta_1)-\wi{f}(\zeta_2)|\leq C\sqrt{|\zeta_1-\zeta_2|},\ \zeta_1,\zeta_2\in\overline{\DD},$$ where
$C$ is uniform if $(D,z)\in\mathcal{D}(c)$.

\end{prop}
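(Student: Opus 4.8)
The plan is to reconstruct $\wi f$ from the already-established Hölder estimates on $f$, $\nu_D\circ f$ and $\rho$. By definition $\wi f$ is the holomorphic extension of the boundary function $\TT\ni\zeta\longmapsto\zeta\rho(\zeta)\overline{\nu_D(f(\zeta))}$. From Proposition~\ref{8} together with the Lipschitz property of $\nu_D$ on $\pa D$ (condition (5) of Definition~\ref{30}) we get that $\nu_D\circ f$ is uniformly $1/2$-Hölder on $\TT$; from Proposition~\ref{10a} the function $\rho$ is uniformly $1/2$-Hölder on $\TT$; and $\rho$ is uniformly bounded above and below by Proposition~\ref{9}. Consequently the boundary datum $b(\zeta):=\zeta\rho(\zeta)\overline{\nu_D(f(\zeta))}$ is a product of uniformly bounded, uniformly $1/2$-Hölder functions on $\TT$, hence is itself uniformly $1/2$-Hölder and uniformly bounded on $\TT$.

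Then I would invoke the boundary regularity theorem for the Cauchy/harmonic extension (the same Privalov-type result, Theorem~\ref{priv}, already used in the proof of Proposition~\ref{10a}): a function on $\TT$ which is uniformly $\tfrac12$-Hölder extends to a function in $\OO(\DD)\cap\cC^{1/2}(\CDD)$ with the $\cC^{1/2}(\CDD)$-norm controlled by the $\cC^{1/2}(\TT)$-norm of the boundary datum. Applying this componentwise to $b$, and noting that the holomorphic extension is unique and hence equals $\wi f$, we obtain that $\wi f$ is uniformly $1/2$-Hölder continuous on $\CDD$, which is exactly the assertion.

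The only genuine point requiring care is that $\wi f$ is \emph{a priori} only known (from the definition of a weak stationary mapping, and from Proposition~\ref{6} in the real analytic case) to be the holomorphic extension of $b$ as an abstract object; one must check that the Privalov-type extension of $b$ coincides with it, which is immediate from uniqueness of holomorphic extensions once we know $b$ has \emph{some} holomorphic extension continuous up to $\CDD$. The main obstacle — if any — is bookkeeping of the word ``uniform'': one has to make sure each Hölder constant produced along the way depends only on $c$ (through Propositions~\ref{7}, \ref{8}, \ref{9}, \ref{10a} and the structural constants in Definition~\ref{30}) and not on $D$, $z$ or the particular $E$-mapping, but this is routine since every ingredient was already proven with a uniform constant. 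Thus the proof is essentially: assemble the uniform $1/2$-Hölder bounds on $\rho$ and $\nu_D\circ f$, multiply, and apply the harmonic-conjugate / Privalov estimate.

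\begin{proof}
By Proposition~\ref{8} and the condition (5) of Definition~\ref{30} the mapping $\nu_D\circ f$ is uniformly $1/2$-H\"older continuous on $\TT$. By Proposition~\ref{9} the function $\rho$ is uniformly bounded on $\TT$ (from above and below), and by Proposition~\ref{10a} it is uniformly $1/2$-H\"older continuous on $\TT$. Hence each component of the boundary function
$$\TT\ni\zeta\longmapsto\zeta\,\rho(\zeta)\,\overline{\nu_D(f(\zeta))}$$
is a product of uniformly bounded, uniformly $1/2$-H\"older continuous functions, so it is uniformly bounded and uniformly $1/2$-H\"older continuous on $\TT$. By Definition~\ref{21} this boundary function admits a holomorphic extension to $\DD$, namely $\wi f$; applying Theorem~\ref{priv} to each component (the harmonic extension and its conjugate of a $1/2$-H\"older datum are again $1/2$-H\"older on $\CDD$, with a uniform constant), we conclude that $\wi f\in\cC^{1/2}(\CDD)$ with a uniform H\"older constant. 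This is the assertion.
\end{proof}
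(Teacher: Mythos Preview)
Your proof is correct and follows essentially the same route as the paper: establish the uniform $1/2$-H\"older estimate for $\wi f$ on $\TT$ from the product decomposition $\wi f(\zeta)=\zeta\,\rho(\zeta)\,\overline{\nu_D(f(\zeta))}$ (using Propositions~\ref{8}, \ref{9}, \ref{10a} and condition~(5) of Definition~\ref{30}), then pass from $\TT$ to $\CDD$ by a Hardy--Littlewood type boundary-to-interior result. The only cosmetic difference is that the paper invokes Theorem~\ref{lit2} for this last step rather than Theorem~\ref{priv}; since $\wi f$ is already known to lie in $\OO(\DD)\cap\cC(\CDD)$, Theorem~\ref{lit2} applies directly to the full boundary datum and is the more natural citation, whereas Theorem~\ref{priv} (which only assumes H\"older control on the real part) is a slight overkill here --- but it yields the same conclusion with a uniform constant, so your argument stands.
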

\begin{proof}
By Propositions \ref{8} and \ref{10a} we have desired inequality for $\zeta_1,\zeta_2\in\TT$.  Theorem \ref{lit2} finishes the proof.
\end{proof}

\section{Openness of $E$-mappings' set}\label{27}
We shall show that perturbing a little a domain $D$ equipped with an $E$-mapping, we obtain a domain which also has an $E$-mapping, being close to a given one.

\subsection{Preliminary results}

\begin{propp}\label{11}
Let $f:\mathbb{D}\longrightarrow D$ be an $E$-mapping. Then there exist domains $G,\wi D,\wi G\subset\CC^n$ and a biholomorphism $\Phi:\wi D\longrightarrow\wi G$ such that
\begin{enumerate}
\item $\wi D,\wi G$ are neighborhoods of $\overline D,\overline G$ respectively;
\item $\Phi(D)=G$;
\item $g(\zeta):=\Phi(f(\zeta))=(\zeta,0,\ldots,0),\ \zeta\in\CDD$;
\item $\nu_G(g(\zeta))=(\zeta,0,\ldots,0),\ \zeta\in\TT$;
\item for any $\zeta\in\TT$, a point $g(\zeta)$ is a point of the strong linear convexity of $G$.
\end{enumerate}
\end{propp}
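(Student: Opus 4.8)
The plan is to straighten out the $E$-mapping $f$ by a sequence of explicit biholomorphic changes of coordinates defined on a neighborhood of $\overline D$, arranging successively properties (3), (4) and (5). Throughout I would use that $f$ is injective on $\CDD$ (the injectivity proposition above), that $f'\bullet\widetilde f\equiv 1$ with the normalization $\eqref{rho}$, and that $f,\widetilde f$ extend holomorphically to a neighborhood $\DD_f$ of $\CDD$.

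First I would flatten the image of the disc. Since $f$ is holomorphic and injective on a neighborhood of $\CDD$ with nowhere vanishing derivative (as $f'\bullet\widetilde f\equiv1$ forces $f'\neq0$ on $\CDD$), the image $f(\DD_f)$ is a smooth embedded analytic disc; one extends $f$ to a biholomorphism $\Psi$ from a neighborhood of $\CDD\times\{0\}\subset\CC\times\CC^{n-1}$ onto a neighborhood of $f(\CDD)$ by setting, say, $\Psi(\zeta,w')=f(\zeta)+\sum_{j=2}^n w_j e_j(\zeta)$, where $e_2(\zeta),\dots,e_n(\zeta)$ together with $f'(\zeta)$ form a holomorphic frame of $\CC^n$ near $\CDD$ (such a frame exists by shrinking $\DD_f$ and using that $\CDD$ is simply connected, or just locally and patching). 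Then $\Phi_1:=\Psi^{-1}$, defined and biholomorphic on a neighborhood of $f(\CDD)$, sends $f(\zeta)$ to $(\zeta,0,\dots,0)$. A technical point to handle: $\Phi_1$ is initially defined only near $f(\CDD)$, whereas we want it on a neighborhood of all of $\overline D$; but $f(\TT)=\partial D\cap(\text{something})$ need not be all of $\partial D$, so $\Phi_1$ cannot literally be extended to $\overline D$. Here I would invoke the left inverse $F$ from Proposition~\ref{34}, which is holomorphic on a full neighborhood $U$ of $\overline D$ and satisfies $F\circ f=\id$; composing $F$ with the remaining $(n-1)$ components of $\Phi_1$ (extended off $f(\CDD)$ in any holomorphic fashion, or built directly from $F$ and the defining data) yields a map $\Phi_1$ biholomorphic near $\overline D$ with $\Phi_1\circ f=(\id_\DD,0,\dots,0)$. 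This last gluing is the first place one must be careful, and I expect it to be a genuine (though not deep) obstacle.

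Next, with $g_1:=\Phi_1\circ f$ already equal to $(\zeta,0,\dots,0)$, I would correct the normal directions to achieve (4). Let $G_1:=\Phi_1(D)$. The outward normal $\nu_{G_1}(g_1(\zeta))$ is some unit vector-valued real-analytic function on $\TT$; the associated map $\widetilde g_1(\zeta):=\zeta\rho_1(\zeta)\overline{\nu_{G_1}(g_1(\zeta))}$ extends holomorphically to $\DD_f$ and, by the normalization, $g_1'\bullet\widetilde g_1=(1,0,\dots,0)\bullet\widetilde g_1\equiv1$, i.e. the first component of $\widetilde g_1$ is identically $1$. Thus $\widetilde g_1(\zeta)=(1,\widetilde g_{1,2}(\zeta),\dots,\widetilde g_{1,n}(\zeta))$ with holomorphic components. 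Apply the shear $\Phi_2(w):=(w_1,w_2-\widetilde g_{1,2}(w_1)w_1\cdot\text{(stuff)},\dots)$ — more precisely the biholomorphism $(w_1,w')\mapsto(w_1, w' - w_1\,\widetilde g_{1}'(w_1))$ is not quite right; rather I want $\Phi_2$ to be a holomorphic map, polynomial in $w'$ and with coefficients holomorphic in $w_1$, fixing $\{w'=0\}$ pointwise and arranged so that the complex tangent hyperplane of $\Phi_2(G_1)$ at $(\zeta,0,\dots,0)$ becomes $\{w_1=\zeta\}$. Concretely, on $\TT$ we need $\overline{\nu_{\Phi_2(G_1)}(\zeta,0,\dots,0)}$ proportional to $(1,0,\dots,0)$; since $\nu_{G_1}\circ g_1$ corresponds to the linear functional $w\mapsto\overline{\widetilde g_{1}(\zeta)}/\zeta\bullet w$ vanishing on the tangent space, and $\widetilde g_1=(1,\widetilde g_{1,2},\dots)$, the linear change $w\mapsto(w_1+\sum_{j\ge2}\widetilde g_{1,j}(w_1)w_j,\, w_2,\dots,w_n)$ does the job (its coefficients are holomorphic near $\CDD$ in $w_1$, and again one uses $F$ to spread it to a neighborhood of $\overline{G_1}$). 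Setting $\Phi:=\Phi_2\circ\Phi_1$ and $G:=\Phi(D)$ gives (1)--(4); one checks $\nu_G(g(\zeta))=(\zeta,0,\dots,0)$ on $\TT$ because it is a unit vector in the direction determined by the defining function, and the extra factor $\zeta$ is forced by $\widetilde g$ having first component $\equiv1$ and $\widetilde g(\zeta)=\zeta\rho(\zeta)\overline{\nu_G(g(\zeta))}$ together with $\rho>0$.

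Finally, property (5) is immediate: strong linear convexity at a boundary point is the inequality $\eqref{48}$ for a defining function, and it is preserved under biholomorphic maps that are regular near the point — this is exactly the remark after Definition~\ref{29} that $\eqref{48}$ is independent of the defining function, applied to $r\circ\Phi^{-1}$. Since each $g(\zeta)=\Phi(f(\zeta))$ with $f(\zeta)\in\partial D$ a point of strong linear convexity of $D$ (every boundary point of a strongly linearly convex domain is such), and $\Phi$ is biholomorphic near $\overline D$, each $g(\zeta)$ is a point of strong linear convexity of $G$. The main obstacle, as noted, is purely bookkeeping: making sure every partial change of coordinates, initially natural only along the disc $f(\CDD)$, is realized as (the restriction of) a biholomorphism defined on a full neighborhood of $\overline D$; the left inverse $F$ of Proposition~\ref{34} together with the corollary giving its extension to a neighborhood $U\supset\overline D$ is precisely the tool that makes this possible, and I would lean on it at each stage. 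I would then also record that one may take the neighborhoods $\widetilde D,\widetilde G$ as small as desired, since all the constructions are local around $\overline D$.
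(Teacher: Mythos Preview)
Your outline shares the paper's core idea: use the global left inverse $F$ of Proposition~\ref{34} as a first coordinate and build the remaining coordinates from $\widetilde f$, so that the change of variables lives on a full neighborhood of $\overline D$ rather than only near $f(\CDD)$. The paper carries this out in one stroke: after a linear change arranging that $\widetilde f_1,\widetilde f_2$ have no common zeros on a neighborhood $V\supset\CDD$, it writes down an explicit map $\Psi:V\times\CC^{n-1}\to\CC^n$, affine in the last $n-1$ variables, with $\Psi(\zeta,0)=f(\zeta)$, and proves directly via Proposition~\ref{34} that $\Psi$ is a biholomorphism onto a neighborhood $U\supset\overline D$; then $\Phi:=\Psi^{-1}$. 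Your two-step flatten-then-shear construction would in the end produce a map of the same shape, but the ``gluing'' you flag as an obstacle is exactly what the explicit $\Psi$ sidesteps, and your treatment of it remains vague.

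The genuine error is in your argument for (5). Strong linear convexity is \emph{not} a biholomorphic invariant, and the remark after Definition~\ref{29} says only that \eqref{48} is independent of the choice of defining function for the \emph{same} domain---a much weaker statement. Under a biholomorphism the quantity $\sum r_{z_jz_k}X_jX_k$ (unlike the Levi form) picks up an extra term $\sum_{s} r_{z_s}\,(\partial^2\Psi_s/\partial Z_j\partial Z_k)\,X_jX_k$, which can destroy \eqref{48}; for instance $\Phi(z_1,z_2)=(z_1+z_2^2,z_2)$ sends $\BB_2$ to a domain for which \eqref{48} fails at $(1,0)$. The paper's verification of (5) succeeds precisely because its $\Psi$ is affine in $Z_2,\dots,Z_n$, so $\partial^2\Psi_s/\partial Z_j\partial Z_k=0$ for $j,k\ge2$ and the extra term vanishes on the complex tangent space $\{X_1=0\}$. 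Your candidate maps happen to share this affine-in-fiber structure, so the argument can be salvaged---but it has to be computed, not asserted.
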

\begin{proof}
Let $U,V$ be the sets from Proposition \ref{34}. We claim that after a linear change of coordinates one can assume that $\widetilde{f}_1,\widetilde{f}_2$ do not have common zeroes in $V$.

Since $ f'\bullet\widetilde{f}=1$, at least one of the functions $\wi f_1,\ldots,\wi f_n$, say $\wi f_1$, is not identically equal to $0$. Let $\lambda_1,\ldots,\lambda_m$ be all zeroes of $\wi f_1$ in $V$. We may find $\alpha\in\CC^n$ such that $$(\alpha_1\wi f_1+\ldots+\alpha_n\wi f_n)(\lambda_j)\neq 0,\ j=1,\ldots,m.$$ Otherwise, for any $\alpha\in\CC^n$ there would exist $j\in\{1,\ldots,m\}$ such that $\alpha\bullet\wi f(\lambda_j)=0$, hence $$\CC^n=\bigcup_{j=1}^m\{\alpha\in\CC^n:\ \alpha\bullet\wi f(\lambda_j)=0\}.$$ The sets $\{\alpha\in\CC^n:\alpha \bullet \wi f(\lambda_j)=0\}$, $j=1,\ldots,m$, are the $(n-1)$-dimensional complex hyperplanes, so their finite sum cannot be the space $\CC^n$.

Of course, at least one of the numbers $\alpha_2,\ldots,\alpha_n$, say $\alpha_2$, is non-zero. Let
$$A:=\left[\begin{matrix}
1 & 0 & 0 & \cdots & 0\\
\alpha_1 & \alpha_2 & \alpha_3 &\cdots & \alpha_n\\
0 & 0 & 1 & \cdots & 0\\
\vdots & \vdots & \vdots &\ddots & \vdots \\
0 & 0 & 0 & \cdots & 1
\end{matrix}\right],\quad B:=(A^T)^{-1}.$$ We claim that $B$ is a change of coordinates we are looking for. If $r$ is a defining function of $D$ then $r\circ B^{-1}$ is a defining function of $B_n(D)$, so $B_n(D)$ is a bounded strongly linearly convex domain with real analytic boundary. Let us check that $Bf$ is an $E$-mapping of $B_n(D)$ with associated mappings \begin{equation}\label{56}A\wi f\in\OO(\CDD)\text{\ \  and\ \ }\rho\frac{|A\overline{\nabla r\circ f}|}{|\nabla r\circ f|}\in\CLW(\TT).\end{equation} The conditions (1) and (2) of Definition~\ref{21} are clear. For $\zeta\in\TT$ we have \begin{equation}\label{57}\overline{\nu_{B_n(D)}(Bf(\zeta))}=\frac{\overline{\nabla(r\circ B^{-1})(Bf(\zeta))}}{|\nabla(r\circ B^{-1})(Bf(\zeta))|}=\frac{(B^{-1})^T\overline{\nabla r(f(\zeta))}}{|(B^{-1})^T\overline{\nabla r(f(\zeta))}|}=\frac{A\overline{\nabla r(f(\zeta))}}{|A\overline{\nabla r(f(\zeta))}|},\end{equation} so
\begin{equation}\label{58}\zeta\rho(\zeta)\frac{|A\overline{\nabla r(f(\zeta))}|}{|\nabla r(f(\zeta))|}\overline{\nu_{B_n(D)}(Bf(\zeta))}=\zeta\rho(\zeta)A\overline{\nu_D(f(\zeta))}=A\wi f(\zeta).\end{equation} Moreover, for $\zeta\in\TT$, $z\in D$ \begin{multline*}\langle Bz-Bf(\zeta), \nu_{B_n(D)}(Bf(\zeta))\rangle=\overline{\nu_{B_n(D)}(Bf(\zeta))}^T(Bz-Bf(\zeta))=\\=\frac{\overline{\nabla r(f(\zeta))}^TB^{-1}B_n(z-f(\zeta))}{|(B^{-1})^T\overline{\nabla r(f(\zeta))}|}=\frac{|\nabla r(f(\zeta))|}{|(B^{-1})^T\overline{\nabla r(f(\zeta))}|}\overline{\nu_D(f(\zeta))}^T(z-f(\zeta))=\\=\frac{|\nabla r(f(\zeta))|}{|(B^{-1})^T\overline{\nabla r(f(\zeta))}|}\langle z-f(\zeta), \nu_D(f(\zeta))\rangle.
\end{multline*}
Therefore, $B$ is a desired linear change of coordinates, as claimed.

If necessary, we shrink the sets $U,V$ associated with $f$ to sets associated with $Bf$. There exist holomorphic mappings $h_1,h_2:V\longrightarrow\mathbb{C}$ such that
$$h_1\widetilde{f}_1+h_2\widetilde{f}_2\equiv 1\text{ in }V.$$ Generally, it is a well-known fact for functions on pseudoconvex domains, however in this case it may be shown quite elementarily. Indeed, if $\widetilde{f}_1\equiv 0$ or $\widetilde{f}_2\equiv 0$ then it is obvious. In the opposite case, let $\widetilde{f}_j=F_jP_j$, $j=1,2$, where $F_j$ are holomorphic, non-zero in $V$ and $P_j$ are polynomials with all (finitely many) zeroes in $V$. Then $P_j$ are relatively prime, so there are polynomials $Q_j$, $j=1,2$, such that $$Q_1P_1+Q_2P_2\equiv 1.$$ Hence $$\frac{Q_1}{F_1}\widetilde{f}_1+\frac{Q_2}{F_2}\widetilde{f}_2\equiv 1\ \text{ in }V.$$

Consider the mapping $\Psi:V\times\mathbb{C}^{n-1}\longrightarrow\mathbb{C}^n$ given by
\begin{equation}\label{et2}
\Psi_1(Z):=f_1(Z_1)-Z_2\widetilde{f}_2(Z_1)-h_1(Z_1)
\sum_{j=3}^{n}Z_j\widetilde{f}_j(Z_1),
\end{equation}
\begin{equation}\label{et3}
\Psi_2(Z):=f_2(Z_1)+Z_2\widetilde{f}_1(Z_1)-h_2(Z_1)
\sum_{j=3}^{n}Z_j\widetilde{f}_j(Z_1),
\end{equation}
\begin{equation}\label{et4}
\Psi_j(Z):=f_j(Z_1)+Z_j,\ j=3,\ldots,n.
\end{equation}

We claim that $\Psi$ is biholomorphic in $\Psi^{-1}(U)$. First of all observe that $\Psi^{-1}(\{z\})\neq\emptyset$ for any $z\in U$. Indeed, by Proposition \ref{34} there exists (exactly one) $Z_1\in V$ such that $$(z-f(Z_1))\bullet\widetilde{f}(Z_1)=0.$$ The numbers $Z_j\in\CC$, $j=3,\ldots,n$ are determined uniquely by the equations $$Z_j=z_j-f_j(Z_1).$$ At least one of the numbers $\wi f_1(Z_1),\wi f_2(Z_1)$, say $\wi f_1(Z_1)$, is non-zero. Let $$Z_2:=\frac{z_2-f_2(Z_1)+h_2(Z_1)\sum_{j=3}^{n}Z_j\widetilde{f}_j(Z_1)}{\wi f_1(Z_1)}.$$ Then we easily check that the equality $$z_1=f_1(Z_1)-Z_2\widetilde{f}_2(Z_1)-h_1(Z_1)
\sum_{j=3}^{n}Z_j\widetilde{f}_j(Z_1)$$ is equivalent to $(z-f(Z_1))\bullet\widetilde{f}(Z_1)=0$, which is true.

To finish the proof of biholomorphicity of $\Psi$ in $\Psi^{-1}(U)$ it suffices to check that $\Psi$ is injective in $\Psi^{-1}(U)$. Let us take $Z,W$ such that $\Psi(Z)=\Psi(W)=z\in U$. By a direct computation both $\zeta=Z_1\in V$ and $\zeta=W_1\in V$ solve the equation
$$(z-f(\zeta))\bullet\widetilde{f}(\zeta)=0.$$ From Proposition \ref{34} we infer that it has exactly one solution. Hence $Z_1=W_1$. By \eqref{et4} we have $Z_j=W_j$ for $j=3,\ldots,n$. Finally $Z_2=W_2$ follows from
one of the equations \eqref{et2}, \eqref{et3}. Let $G:=\Psi^{-1}(D)$, $\wi D:=U$, $\wi G:=\Psi^{-1}(U)$, $\Phi:=\Psi^{-1}$.

Now we are proving that $\Phi$ has desired properties. We have $$\Psi_j(\zeta,0,\ldots,0)=f_j(\zeta),\ j=1,\ldots,n,$$ so $\Phi(f(\zeta))=(\zeta,0,\ldots,0)$, $\zeta\in\CDD$. Put $g(\zeta):=\Phi(f(\zeta))$, $\zeta\in\CDD$. Note that the entries of the matrix $\Psi'(g(\zeta))$ are $$\frac{\partial\Psi_1}{\partial Z_1}(g(\zeta))=f_1'(\zeta),\ \frac{\partial\Psi_1}{\partial Z_2}(g(\zeta))=-\widetilde{f}_2(\zeta),\ \frac{\partial\Psi_1}{\partial Z_j}(g(\zeta))=-h_1(\zeta)\widetilde{f}_j(\zeta),\ j\geq 3,$$$$\frac{\partial\Psi_2}{\partial Z_1}(g(\zeta))=f_2'(\zeta),\ \frac{\partial\Psi_2}{\partial Z_2}(g(\zeta))=\widetilde{f}_1(\zeta),\ \frac{\partial\Psi_2}{\partial Z_j}(g(\zeta))=-h_2(\zeta)\widetilde{f}_j(\zeta),\ j\geq 3,$$$$\frac{\partial\Psi_k}{\partial Z_1}(g(\zeta))=f_k'(\zeta),\ \frac{\partial\Psi_k}{\partial Z_2}(g(\zeta))=0,\ \frac{\partial\Psi_k}{\partial Z_j}(g(\zeta))=\delta^{k}_{j},\ j,k\geq 3.$$ Thus $\Psi '(g(\zeta))^T\wi f(\zeta)=(1,0,\ldots,0)$, $\zeta\in\CDD$ (since $f'\bullet\wi f=1$). Let us take a defining function $r$ of $D$. Then $r\circ\Psi$ is a defining function of $G$. Therefore, \begin{multline*}\nu_G(g(\zeta))=\frac{\nabla(r\circ\Psi)(g(\zeta))}{|\nabla(r\circ\Psi)(g(\zeta))|}=
\frac{\overline{\Psi'(g(\zeta))}^T\nabla r(f(\zeta))}{|\overline{\Psi'(g(\zeta))}^T\nabla r(f(\zeta))|}=\\=\frac{\overline{\Psi'(g(\zeta))}^T\ov{\frac{\wi f(\zeta)}{\zeta\rho(\zeta)}}|\nabla r(f(\zeta))|}{\left|\overline{\Psi'(g(\zeta))}^T\ov{\frac{\wi f(\zeta)}{\zeta\rho(\zeta)}}|\nabla r(f(\zeta))|\right|}=g(\zeta),\ \zeta\in\TT.\end{multline*}

It remains to prove the fifth condition. By Definition \ref{29}(2) we have to show that \begin{equation}\label{sgf}\sum_{j,k=1}^n\frac{\partial^2(r\circ\Psi)}{\partial z_j\partial\overline{z}_k}(g(\zeta))X_{j}\overline{X}_{k}>\left|\sum_{j,k=1}^n\frac{\partial^2(r\circ\Psi)}{\partial z_j\partial z_k}(g(\zeta))X_{j}X_{k}\right|\end{equation} for $\zeta\in\TT$ and $X\in(\CC^{n})_*$ with
$$\sum_{j=1}^n\frac{\partial(r\circ\Psi)}{\partial z_j}(g(\zeta))X_{j}=0,$$ i.e. $X_1=0$. We have $$\sum_{j,k=1}^n\frac{\partial^2(r\circ\Psi)}{\partial z_j\partial\overline{z}_k}(g(\zeta))X_{j}\overline{X}_{k}=\sum_{j,k,s,t=1}^n\frac{\partial^2 r}{\partial z_s\partial\overline{z}_t}(f(\zeta))\frac{\partial\Psi_s}{\partial z_j}(g(\zeta))\overline{\frac{\partial\Psi_t}{\partial z_k}(g(\zeta))}X_{j}\overline{X}_{k}=$$$$=\sum_{s,t=1}^n\frac{\partial^2 r}{\partial z_s\partial\overline{z}_t}(f(\zeta))Y_{s}\overline{Y}_{t},$$ where $$Y:=\Psi'(g(\zeta))X.$$ Note that $Y\neq 0$. Additionally $$\sum_{s=1}^n\frac{\partial r}{\partial z_s}(f(\zeta))Y_{s}=\sum_{j,s=1}^n\frac{\partial r}{\partial z_s}(f(\zeta))\frac{\partial\Psi_s}{\partial z_j}(g(\zeta))X_j=\sum_{j=1}^n\frac{\partial(r\circ\Psi)}{\partial z_j}(g(\zeta))X_{j}=0.$$ Therefore, by the strong linear convexity of $D$ at $f(\zeta)$ $$\sum_{s,t=1}^n\frac{\partial^2 r}{\partial z_s\partial\overline{z}_t}(f(\zeta))Y_{s}\overline{Y}_{t}>\left|\sum_{s,t=1}^n\frac{\partial^2 r}{\partial z_s\partial z_t}(f(\zeta))Y_{s}Y_{t}\right|.$$ To finish the proof observe that $$\left|\sum_{j,k=1}^n\frac{\partial^2(r\circ\Psi)}{\partial z_j\partial z_k}(g(\zeta))X_{j}X_{k}\right|=\left|\sum_{j,k,s,t=1}^n\frac{\partial^2 r}{\partial z_s\partial z_t}(f(\zeta))\frac{\partial\Psi_s}{\partial z_j}(g(\zeta))\frac{\partial\Psi_t}{\partial z_k}(g(\zeta))X_{j}X_{k}+\right.$$$$\left.+\sum_{j,k,s=1}^n\frac{\partial r}{\partial z_s}(f(\zeta))\frac{\partial^2\Psi_s}{\partial z_j\partial z_k}(g(\zeta))X_{j}X_{k}\right|=$$$$=\left|\sum_{s,t=1}^n\frac{\partial^2 r}{\partial z_s\partial z_t}(f(\zeta))Y_{s}Y_{t}+\sum_{j,k=2}^n\sum_{s=1}^n\frac{\partial r}{\partial z_s}(f(\zeta))\frac{\partial^2\Psi_s}{\partial z_j\partial z_k}(g(\zeta))X_{j}X_{k}\right|$$ and $$\frac{\partial^2\Psi_s}{\partial z_j\partial z_k}(g(\zeta))=0,\ j,k\geq 2,\ s\geq 1,$$ which gives \eqref{sgf}.
\end{proof}

\begin{remm}\label{rem:theta}
Let $D$ be a bounded domain in $\mathbb C^n$ and let $f:\DD\longrightarrow D$ be a (weak) stationary mapping such that $\partial D$ is real analytic in a neighborhood of $f(\TT)$. Assume moreover that there are a neighborhood $U$ of $f(\CDD)$ and a mapping $\Theta:U\longrightarrow\CC^n$ biholomorphic onto its image and the set $D\cap U$ is connected. Then $\Theta\circ f$ is a (weak) stationary mapping of $G:=\Theta(D\cap U)$.

In particular, if $U_1$, $U_2$ are neighborhoods of the closures of domains $D_1$, $D_2$ with real analytic boundaries and  $\Theta:U_1\longrightarrow U_2$ is a biholomorphism such that $\Theta(D_1)=D_2$, then $\Theta$ maps (weak) stationary mappings of $D_1$ onto (weak) stationary mappings of $D_2$.
\end{remm}
\begin{proof}
Actually, it is clear that two first conditions of the definition of (weak) stationary mappings are preserved by $\Theta$. To show the third one we proceed similarly as in the equations \eqref{56}, \eqref{57}, \eqref{58}. Let $f:\DD\longrightarrow D $ be a (weak) stationary mapping. The candidates for the mappings in condition (3) (resp. (3')) of Definition~\ref{21} for $\Theta\circ f$ in the domain $G$ are $$((\Theta'\circ f)^{-1})^T\wi f\text{\ \  and\ \ }\rho\frac{|((\Theta'\circ f)^{-1})^T\overline{\nabla r\circ f}|}{|\nabla r\circ f|}.$$ Indeed, for $\zeta\in\TT$ \begin{multline*}\overline{\nu_{G}(\Theta(f(\zeta)))}=
\frac{\overline{\nabla(r\circ\Theta^{-1})(\Theta(f(\zeta)))}}{|\nabla(r\circ\Theta^{-1})(\Theta(f(\zeta)))|}=\frac{[(\Theta^{-1})'(\Theta(f(\zeta)))]^T\overline{\nabla r(f(\zeta))}}{|[(\Theta^{-1})'(\Theta(f(\zeta)))]^T\overline{\nabla r(f(\zeta))}|}=\\
=\frac{(\Theta'(f(\zeta))^{-1})^T\overline{\nabla r(f(\zeta))}}{|(\Theta'(f(\zeta))^{-1})^T\overline{\nabla r(f(\zeta))}|},
\end{multline*}
hence
\begin{multline*}\zeta\rho(\zeta)\frac{|(\Theta'(f(\zeta))^{-1})^T\overline{\nabla r(f(\zeta))}|}{|\nabla r(f(\zeta))|}\overline{\nu_{G}(\Theta(f(\zeta)))}=\\
=\zeta\rho(\zeta)(\Theta'(f(\zeta))^{-1})^T\overline{\nu_{D}(f(\zeta))}=
(\Theta'(f(\zeta))^{-1})^T\wi f(\zeta).
\end{multline*}
\end{proof}

\subsection{Situation (\dag)}\label{dag}
Consider the following situation, denoted by (\dag) (with data $D_0$ and $U_0$):
\begin{itemize}
\item $D_0$ is a bounded domain in $\CC^n$, $n\geq 2$;
\item $f_0:\CDD\ni\zeta\longmapsto(\zeta,0,\ldots,0)\in\ov D_0$, $\zeta\in\CDD$;
\item $f_0(\DD)\subset D_0$;
\item $f_0(\TT)\subset\partial D_0$;
\item $\nu_{D_0}(f_0(\zeta))=(\zeta,0,\ldots,0)$, $\zeta\in\TT$;
\item for any $\zeta\in\TT$, a point $f_0(\zeta)$ is a point of the strong linear convexity of $D_0$;
\item $\partial D_0$ is real analytic in a neighborhood $U_0$ of $f_0(\TT)$ with a function $r_0$;
\item $|\nabla r_0|=1$ on $f_0(\TT)$ (in particular, $r_{0z}(f_0(\zeta))=(\ov\zeta/2,0,\ldots,0)$, $\zeta\in\TT$).
\end{itemize}

Since $r_0$ is real analytic on $U_0\su\RR^{2n}$, it extends in a natural way to a holomorphic function in a neighborhood $U_0^\CC\su\mb{C}^{2n}$ of $U_0$. Without loss of generality we may assume that $r_0$ is bounded on $U_0^\CC$. Set $$X_0=X_0(U_0,U_0^{\mathbb C}):=\{r\in\mc{O}(U_0^\CC):\text{$r(U_0)\su\mb{R}$ and $r$ is bounded}\},$$ which equipped with the sup-norm is a (real) Banach space.

\begin{remm} Lempert considered the case when $U_0$ is a neighborhood of a boundary of a bounded domain $D_0$ with real analytic boundary. We shall need more general results to prove the `localization property'.
\end{remm}

\subsection{General lemmas}\label{General lemmas}
We keep the notation from Subsection \eqref{dag} and assume Situation (\dag).

Let us introduce some additional objects we shall be dealing with and let us prove more general lemmas (its generality will be useful in the next section).

Consider the Sobolev space $W^{2,2}(\TT)=W^{2,2}(\TT,\CC^m)$ of functions $f:\TT\longrightarrow\CC^m$, whose first two derivatives (in the sense of distribution) are in $L^2(\TT)$. The $W^{2,2}$-norm is denoted by $\|\cdot\|_W$. For the basic properties of $W^{2,2}(\TT)$ see Appendix.

Put $$B:=\{f\in W^{2,2}(\TT,\CC^n):f\text{ extends holomorphically on $\mb{D}$ and $f(0)=0$}\},$$$$B_0:=\{f\in B:f(\TT)\su U_0\},\quad B^*:=\{\overline{f}:f\in B\},$$$$Q:=\{q\in W^{2,2}(\TT,\CC):q(\TT)\su\RR\},\quad Q_0:=\{q\in Q:q(1)=0\}.$$

It is clear that $B$, $B^*$, $Q$ and $Q_0$ equipped with the norm $\|\cdot\|_W$ are (real) Banach spaces. Note that $B_0$ is an open neighborhood of $f_0$. In what follows, we identify $f\in B$ with its unique holomorphic extension on $\mb{D}$.

Let us define the projection $$\pi:W^{2,2}(\TT,\CC^n)\ni f=\sum_{k=-\infty}^{\infty}a_k\zeta^{k}\longmapsto\sum_{k=-\infty}^{-1}a_k\zeta^{k}\in{B^*}.$$ Note that $f\in W^{2,2}(\TT,\CC^n)$ extends holomorphically on $\mb{D}$ if and only if $\pi(f)=0$ (and the extension is $\mathcal C^{1/2}$ on $\TT$). Actually, it suffices to observe that
$g(\zeta):=\sum_{k=-\infty}^{-1}a_k\zeta^{k}$, $\zeta\in\TT$, extends holomorphically on $\DD$ if and only if $a_k=0$ for $k<0$. This follows immediately from the fact that the mapping $\TT\ni\zeta\longmapsto g(\ov\zeta)\in\CC^n$ extends holomorphically on $\DD$.

Consider the mapping $\Xi:X_0\times\mb{C}^n\times B_0\times
Q_0\times\mb{R}\longrightarrow Q\times{B^*}\times\mb{C}^n$ defined by
$$\Xi(r,v,f,q,\lambda):=(r\circ f,\pi(\zeta(1+q)(r_z\circ f)),f'(0)-\lambda v),$$ where $\zeta$ is treated as the identity function on $\TT$.

We have the following

\begin{lemm}\label{cruciallemma} There exist a neighborhood $V_0$ of $(r_0,f_0'(0))$ in $X_0\times\mb{C}^n$ and a real analytic mapping $\Upsilon:V_0\longrightarrow B_0\times Q_0\times\mb{R}$ such that for any $(r,v)\in V_0$ we have $\Xi(r,v,\Upsilon(r,v))=0$.
\end{lemm}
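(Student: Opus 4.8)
The plan is to apply the implicit function theorem for real analytic maps between Banach spaces to the map $\Xi$ at the point $(r_0,f_0'(0),f_0,0,1)$. First I would check the base point: a direct computation using the properties of Situation (\dag) --- in particular $f_0(\zeta)=(\zeta,0,\dots,0)$, $r_{0z}(f_0(\zeta))=(\ov\zeta/2,0,\dots,0)$ on $\TT$, $f_0(\TT)\su\partial D_0$, $f_0(0)=0$ and $f_0'(0)=e_1$ --- shows that $r_0\circ f_0\equiv 0$ on $\TT$, that $\zeta(1+0)(r_{0z}\circ f_0)(\zeta)=\zeta\cdot(\ov\zeta/2,0,\dots,0)=(1/2,0,\dots,0)$, which is constant hence holomorphic, so $\pi$ of it vanishes, and that $f_0'(0)-1\cdot f_0'(0)=0$. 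Thus $\Xi(r_0,f_0'(0),f_0,0,1)=0$, giving the base point.

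Next I would verify that $\Xi$ is real analytic. The composition $(r,f)\mapsto r\circ f$ and $(r,f)\mapsto r_z\circ f$ are real analytic from $X_0\times B_0$ because $r$ is bounded holomorphic on the complexification $U_0^\CC$ and $W^{2,2}(\TT)$ is a Banach algebra embedding in $\cC^{1/2}(\TT)$ (so substitution into a convergent power series converges in $W^{2,2}$); multiplication by the fixed smooth function $\zeta$ and by $1+q$ is bilinear bounded; $\pi$ and $f\mapsto f'(0)$ are bounded linear; and $(v,\lambda)\mapsto\lambda v$ is bilinear. So $\Xi$ is real analytic on the stated open set.

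The key step is computing the partial derivative $D_{(f,q,\lambda)}\Xi$ at the base point and showing it is a linear isomorphism from $B\times Q_0\times\RR$ onto $Q\times B^*\times\CC^n$. Linearizing: the derivative of the first component in direction $(\dot f,\dot q,\dot\lambda)$ is $2\re\big(r_{0z}(f_0)\bullet\dot f\big)=\re(\ov\zeta\dot f_1)$ on $\TT$ (only the first coordinate of $r_{0z}(f_0)$ survives, equal to $\ov\zeta/2$, and the real gradient contributes the factor $2$); the derivative of the second component is $\pi\big(\zeta\dot q\,(r_{0z}\circ f_0)+\zeta(\HH\text{-type terms in }\dot f)\big)$, i.e. $\pi$ applied to $\tfrac12\dot q\,e_1$ plus a term linear in $\dot f$ coming from differentiating $r_z\circ f_0$; and the third component's derivative is $\dot f'(0)-\dot\lambda\,e_1$. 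I would then solve $D\Xi(\dot f,\dot q,\dot\lambda)=(Q_{\text{target}},B^*_{\text{target}},c)$ explicitly: the equation $\re(\ov\zeta\dot f_1)=Q_{\text{target}}$ together with $\dot f_1$ holomorphic, $\dot f_1(0)=0$, determines $\dot f_1$ uniquely via the Schwarz integral formula (an $\RR$-valued boundary datum for $\re(\ov\zeta\dot f_1)$ with the normalization built into $Q_0$ makes the harmonic extension plus its conjugate give a unique holomorphic $\zeta^{-1}\dot f_1$ with the right Fourier support, hence $\dot f_1$ with $\dot f_1(0)=0$); the second equation then decouples into the components $j=2,\dots,n$ of $\dot f$ (which, because $r_{0zz}$ and $r_{0z\ov z}$ at $f_0(\zeta)$ satisfy the strong linear convexity inequality \eqref{48}, yields a Riemann--Hilbert type problem of winding number zero that is uniquely solvable --- this is exactly where strong linear convexity enters) and the scalar function $\dot q\in Q_0$; finally $\dot\lambda$ and the remaining freedom are pinned down by the third equation. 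I would quote Remark~\ref{49} and the winding-number count to see that the relevant scalar Riemann--Hilbert operators are isomorphisms. Once $D_{(f,q,\lambda)}\Xi$ is shown bijective and bounded, the open mapping theorem gives it is an isomorphism, and the analytic implicit function theorem produces the neighborhood $V_0$ of $(r_0,f_0'(0))$ and the real analytic solution operator $\Upsilon$ with $\Xi(r,v,\Upsilon(r,v))=0$.

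The main obstacle I expect is the invertibility of the linearization, specifically solving the middle component: one must recognize the linearized equation for $(\dot f_2,\dots,\dot f_n)$ as a vector Riemann--Hilbert problem whose symbol involves the complex Hessian data of $r_0$ along $f_0(\TT)$, and then invoke strong linear convexity (inequality \eqref{48}) to conclude that the problem has index zero and trivial kernel/cokernel, so that together with the unique solvability for $\dot f_1$ and the trivial solvability for $\dot q$ and $\dot\lambda$ the whole operator is an isomorphism. Keeping careful track of the normalizations ($f(0)=0$, $q(1)=0$) so that kernels are genuinely trivial, rather than off by a finite-dimensional space, will require some care.
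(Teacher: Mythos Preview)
Your overall strategy --- apply the analytic implicit function theorem to $\Xi$ at $(r_0,f_0'(0),f_0,0,1)$, check real analyticity of the composition map via the Banach-algebra property of $W^{2,2}(\TT)$, and prove that the partial derivative in $(f,q,\lambda)$ is an isomorphism by solving the linearized system componentwise --- is exactly the paper's approach. Two points deserve correction or comparison.

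First, your treatment of $\dot f_1$ contains an error. The equation $\re(\ov\zeta\,\dot f_1)=\eta$ on $\TT$ with $\dot f_1\in B$ (so $\dot f_1(0)=0$) does \emph{not} determine $\dot f_1$ uniquely: writing $\dot f_1=\zeta(G+iC)$ with $\re G=\eta$ on $\TT$ and $C\in\RR$, one has $\dot f_1(0)=0$ for every $C$, so the condition $\dot f_1(0)=0$ is automatic and fixes nothing. The normalization $q(1)=0$ in $Q_0$ is also irrelevant here --- it is used later to pin down $\dot q$. In the paper the free constant $C$ and the scalar $\dot\lambda$ are determined \emph{simultaneously} from the first component of the third equation $\dot f'(0)-\dot\lambda\,e_1=v$: since $\dot f_1'(0)=G(0)+iC$, the real part gives $\dot\lambda$ and the imaginary part gives $C$. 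Only then are $\dot f_2'(0),\ldots,\dot f_n'(0)$ read off from the remaining components.

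Second, and more substantially, for the vector problem in $(\dot f_2,\ldots,\dot f_n)$ you invoke a Riemann--Hilbert index argument (``winding number zero, trivial kernel/cokernel''). For a genuinely vector-valued Riemann--Hilbert problem, total index zero does not by itself yield bijectivity; one would need control of the partial indices, which is not available here without further work. The paper avoids this entirely. Writing the problem as ``$\alpha g+\beta\ov g-\psi$ extends holomorphically, $g(0)$ prescribed'' with $\alpha(\zeta)=\zeta^2 r_{0\hat z\hat z}(f_0(\zeta))$ and $\beta(\zeta)=r_{0\hat z\hat{\ov z}}(f_0(\zeta))$, the paper invokes the matrix factorization lemma (Proposition~\ref{12}) to produce $H\in\OO(\CDD)$, invertible on $\CDD$, with $HH^*=\beta$ on $\TT$. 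Setting $h=H^Tg$ and $\gamma=H^{-1}\alpha(H^T)^{-1}$ reduces the problem to ``$\gamma h+\ov h-\text{(known)}$ extends holomorphically, $h(0)$ prescribed''. Strong linear convexity now gives $\sup_{\zeta\in\TT}\|\gamma(\zeta)\|<1$, and the problem is solved by the Banach fixed point theorem applied to $K(h)=P(\text{known}-\gamma h)+a$ on $W^{2,2}$ with a suitably weighted norm. This is where Propositions~\ref{12} and~\ref{59} enter, not Remark~\ref{49}. Your acknowledged worry about ``kernels off by a finite-dimensional space'' is precisely what the contraction argument sidesteps.
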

\bigskip
Let $\wi\Xi:X_0\times\mb{C}^n\times B_0\times Q_0\times(0,1)\longrightarrow Q\times{B^*}\times\mb{C}^n$ be defined as $$\wi\Xi(r,w,f,q,\xi):=(r\circ f,\pi(\zeta(1+q)(r_z\circ f)),f(\xi)-w).$$

Analogously we have
\begin{lemm}\label{cruciallemma1} Let $\xi_0\in(0,1)$. Then there exist a neighborhood $W_0$ of $(r_0,f_0(\xi_0))$ in $X_0\times D_0$ and a real analytic mapping $\wi\Upsilon:W_0\longrightarrow B_0\times Q_0\times(0,1)$ such that for any $(r,w)\in W_0$ we have $\wi\Xi(r,w,\wi\Upsilon(r,w))=0$.
\end{lemm}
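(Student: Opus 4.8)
The plan is to apply the real-analytic implicit function theorem for Banach spaces to $\wi\Xi$, in complete parallel with the proof of Lemma~\ref{cruciallemma}; I indicate only where the present argument departs from that one.

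First I would take as base point $(r_0,f_0(\xi_0),f_0,0,\xi_0)\in X_0\times D_0\times B_0\times Q_0\times(0,1)$ and check that $\wi\Xi$ vanishes there: $f_0(\TT)\subset\pa D_0$ forces $r_0\circ f_0\equiv 0$; the data of Situation~(\dag) give $r_{0z}(f_0(\zeta))=(\ov\zeta/2,0,\ldots,0)$, so $\zeta(1+0)(r_{0z}\circ f_0)(\zeta)\equiv(1/2,0,\ldots,0)$ on $\TT$, which extends holomorphically and hence lies in $\ker\pi$; and $f_0(\xi_0)-f_0(\xi_0)=0$. Since $W^{2,2}(\TT)$ is a Banach algebra embedding in $\cC^{1/2}(\TT)$ whose elements extend holomorphically to $\DD$, the maps $(r,f)\mapsto r\circ f$ and $(f,\xi)\mapsto f(\xi)$ are real analytic into the relevant spaces (the same verification as for $\Xi$) and $\pi$ is bounded linear, so $\wi\Xi$ is real analytic near the base point. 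It therefore suffices to show that $L:=D_{(f,q,\xi)}\wi\Xi$ at the base point is a toplinear isomorphism of $B\times Q_0\times\RR$ onto $Q\times B^*\times\CC^n$; then $\wi\Upsilon$ is the branch furnished by the implicit function theorem.

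Next I would compute, using $f_0'\equiv(1,0,\ldots,0)$,
\[ L(\delta f,\delta q,\delta\xi)=\Big(2\re\big((r_{0z}\circ f_0)\bullet\delta f\big),\ \pi\big(\zeta\,\delta q\,(r_{0z}\circ f_0)+\zeta\,D(r_{0z}\circ f_0)[\delta f]\big),\ \delta f(\xi_0)+(\delta\xi,0,\ldots,0)\Big), \]
where $D(r_{0z}\circ f_0)[\delta f]$ is the directional derivative of $\zeta\mapsto r_{0z}(f_0(\zeta))$. The first two components coincide verbatim with those of the partial differential of $\Xi$ at the base point of Lemma~\ref{cruciallemma}; write $T_0\colon B\times Q_0\to Q\times B^*$ for the operator they define. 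The third component differs from the corresponding component of that differential — namely $(\delta f,\delta\lambda)\mapsto\delta f'(0)-(\delta\lambda,0,\ldots,0)$ — only in that evaluation at the interior point $\xi_0$ has replaced evaluation of the derivative at $0$ (the sign of the scalar term being immaterial). As both $\delta f\mapsto\delta f(\xi_0)$ and $\delta f\mapsto\delta f'(0)$ are bounded linear maps $B\to\CC^n$, the operator $L$ differs from the differential of $\Xi$ by a bounded finite-rank operator; since the latter is a toplinear isomorphism (this is what is established in the course of proving Lemma~\ref{cruciallemma}), $L$ is Fredholm of index $0$, and it remains only to prove $\ker L=\{0\}$.

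Finally, to prove injectivity I would argue as in Lemma~\ref{cruciallemma}. Suppose $L(\delta f,\delta q,\delta\xi)=0$, so $(\delta f,\delta q)\in\ker T_0$ and $\delta f(\xi_0)=-(\delta\xi,0,\ldots,0)\in\RR\,(1,0,\ldots,0)$. Vanishing of the first component reads $\re(\ov\zeta\,\delta f_1(\zeta))=0$ on $\TT$; since $\delta f_1$ is holomorphic with $\delta f_1(0)=0$, comparison of Fourier coefficients forces $\delta f_1(\zeta)=it\zeta$ for some $t\in\RR$. The remaining linearized stationarity equations for $(\delta f_2,\ldots,\delta f_n,\delta q)$ are then treated by the Riemann--Hilbert method exactly as in Lemma~\ref{cruciallemma}, the strong linear convexity of $D_0$ along $f_0(\TT)$ (together with the winding number carried by the $\zeta$-factor) forcing their solution space to be normalized by the values $\delta f(\xi_0)$ just as well as by $\delta f'(0)$; this is the only step where $\xi_0\neq 0$ is used. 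One obtains $\delta f_j(\xi_0)=0$ for $j\geq 2$, and, since $\delta f_1(\xi_0)=it\xi_0$ must be real while $\xi_0\in(0,1)$, also $t=0$; hence $\delta f\equiv 0$, whence $\delta q=0$ and then $\delta\xi=0$. Thus $L$ is injective, hence an isomorphism, and the lemma follows. The substantive work is the Riemann--Hilbert analysis of $T_0$, but it is identical to that of Lemma~\ref{cruciallemma}; the only genuinely new observation — and the only delicate point to get right — is the elementary one that $\xi_0\neq 0$ lets evaluation at $\xi_0$ replace evaluation of the derivative at $0$ in the final normalization.
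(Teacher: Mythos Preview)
Your argument is correct and takes a somewhat different route from the paper's. Both proofs verify that the partial derivative $\wi\Xi_{(f,q,\xi)}$ at the base point is a toplinear isomorphism; the paper does this by rerunning the full bijectivity argument of Lemma~\ref{cruciallemma} with the normalization $h(\xi_0)=a$ replacing $h(0)=a$, whereas you observe that $L$ differs from the already-invertible differential of $\Xi$ only by a bounded finite-rank operator, hence is Fredholm of index $0$, so that injectivity alone suffices. This is a genuine economy: you need the $\xi_0$-normalization statement only for the homogeneous Riemann--Hilbert problem, not the inhomogeneous one.

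The one point you assert without proof is exactly the new ingredient the paper isolates: why does prescribing $h(\xi_0)$ determine the solution of \eqref{al9} uniquely, just as prescribing $h(0)$ did? The paper's device is to precompose with the M\"obius map $\tau(\zeta)=(\xi_0-\zeta)/(1-\ov{\xi_0}\zeta)$: then $h$ solves \eqref{al9} with $h(\xi_0)=a$ if and only if $h\circ\tau$ solves the analogous problem (with $\gamma$, $H^{-1}\psi$ replaced by their compositions with $\tau$, the norm bound $\|\gamma\circ\tau\|<1$ persisting) and $(h\circ\tau)(0)=a$, which is covered by the Banach fixed-point argument already carried out. Your sentence ``forcing their solution space to be normalized by the values $\delta f(\xi_0)$ just as well as by $\delta f'(0)$'' is the correct conclusion, but you should supply this one-line mechanism to make the proof self-contained. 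With that addition your Fredholm framing is both valid and tidier than the paper's direct approach.
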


\begin{proof}[Proof of Lemmas \ref{cruciallemma} and \ref{cruciallemma1}]

We will prove the first lemma. Then we will see that a proof of the second one reduces to that proof.

We claim that $\Xi$ is real analytic. The only problem is to show that the mapping $$T: X_0\times B_0\ni(r,f)\longmapsto r\circ f\in Q$$ is real analytic (the real analyticity of the mapping $X_0\times B_0\ni(r,f)\longmapsto r_z\circ f\in W^{2,2}(\TT,\CC^n)$ follows from this claim).

Fix $r\in X_0$, $f\in B_0$ and take $\eps>0$ so that a $2n$-dimensional polydisc $P_{2n}(f(\zeta),\eps)$ is contained in $U_0^\CC$ for any $\zeta\in\TT$. Then any function $\wi r\in X_0$ is holomorphic in $U_0^\CC$, so it may be expanded as a holomorphic series convergent in $P_{2n}(f(\zeta),\eps)$. Losing no generality we may assume that $n$-dimensional polydiscs $P_{n}(f(\zeta),\eps)$, $\zeta\in\TT$, satisfy $P_{n}(f(\zeta),\eps)\su U_0$. This gives an expansion of the function $\wi r$ at any point $f(\zeta)$, $\zeta\in\TT$, into a series $$\sum_{\alpha\in\NN_0^{2n}}\frac{1}{\alpha!}\frac{\pa^{|\alpha|}\wi r}{\pa x^\alpha}(f(\zeta))x^\alpha$$ convergent to $\wi r(f(\zeta)+x)$, provided that $x=(x_1,\ldots,x_{2n})\in P_n(0,\eps)$ (where $\NN_0:=\NN\cup\{0\}$ and $|\alpha|:=\alpha_1+\ldots+\alpha_{2n}$). Hence \begin{equation}\label{69}T(r+\varrho,f+h)=\sum_{\alpha\in\NN_0^{2n}}\frac{1}{\alpha!}\left(\frac{\pa^{|\alpha|}r}{\pa x^\alpha}\circ f\right)h^\alpha+\sum_{\alpha\in\NN_0^{2n}}\frac{1}{\alpha!}\left(\frac{\pa^{|\alpha|}\varrho}{\pa x^\alpha}\circ f\right)h^\alpha\end{equation} pointwise for $\varrho\in X_0$ and $h\in W^{2,2}(\TT,\CC^n)$ with $\|h\|_{\sup}<\eps$.

Put $P:=\bigcup_{\zeta\in \TT} P_{2n}(f(\zeta),\eps)$ and for $\wi r\in X_0$ put $||\wi r||_P:=\sup_P|\wi r|$. Let $\wi r$ be equal to $r$ or to $\varrho$, where $\varrho$ lies is in a neighborhood of $0$ in $X_0$. The Cauchy inequalities give
\begin{equation}\label{series}\left|\frac{\pa^{|\alpha|}\wi r}{\pa x^\alpha}(f(\zeta))\right|\leq\frac{\alpha!\|\wi r\|_{P}}{\eps^{|\alpha|}},\quad\zeta\in\TT.\end{equation}
Therefore, $$\left|\left|\frac{\pa^{|\alpha|}\wi r}{\pa x^\alpha}\circ f\right|\right|_W\leq C_1\frac{\alpha!\|\wi r\|_{P}}{\eps^{|\alpha|}}$$ for some $C_1>0$.

There is $C_2>0$ such that $$\|gh^\alpha\|_W\leq C_2^{|\alpha|+1}\|g\|_W\|h_1\|^{\alpha_1}_W\cdotp\ldots\cdotp\|h_{2n}\|^{\alpha_{2n}}_W$$ for $g\in W^{2,2}(\TT,\CC)$, $h\in W^{2,2}(\TT,\CC^n)$, $\alpha\in\NN_0^{2n}$ (see Appendix for a proof of this fact). Using the above inequalities we infer that $$\sum_{\alpha\in\NN_0^{2n}}\left|\left|\frac{1}{\alpha!}\left(\frac{\pa^{|\alpha|}\wi r}{\pa x^\alpha}\circ f\right)h^\alpha\right|\right|_W$$ is convergent if $h$ is small enough on the norm $\|\cdot\|_W$. Therefore, the series~\eqref{69} is absolutely convergent in the norm $\|\cdot\|_W$, whence $T$ is real analytic.

To show the existence of $V_0$ and $\Upsilon$ we will make use of the Implicit Function Theorem. More precisely, we shall show that the partial derivative $$\Xi_{(f,q,\lambda)}(r_0,f_0'(0),f_0,0,1):B\times Q_0\times\mb{R}\longrightarrow Q\times{B^*}\times\mb{C}^n$$ is an isomorphism.
Observe that for any $(\widetilde{f},\widetilde{q},\widetilde{\lambda})\in B\times Q_0\times\mb{R}$ the following equality holds
\begin{multline*}\Xi_{(f,q,\lambda)}(r_0,f_0'(0),f_0,0,1)(\widetilde{f},\widetilde{q},\widetilde{\lambda})=\left.\frac{d}{dt}
\Xi(r_0,f_0'(0),f_0+t\widetilde{f},t\widetilde{q},1+t\widetilde{\lambda})\right|_{t=0}=\\
=((r_{0z}\circ f_0)\widetilde{f}+(r_{0\overline{z}}\circ f_0)\overline{\widetilde{f}},\pi(\zeta\widetilde{q}r_{0z}\circ f_0+\zeta(r_{0zz} \circ
f_0)\widetilde{f}+\zeta(r_{0z\overline{z}}\circ f_0)\overline{\widetilde{f}}),\widetilde{f}'(0)-\widetilde{\lambda}f_0'(0)),
\end{multline*}
where we treat ${r_0}_z,{r_0}_{\overline{z}}$ as row vectors, $\widetilde{f},\overline{\widetilde{f}}$ as column vectors and $r_{0zz}=\left[\frac{\partial^2r_0}{\partial z_j\partial z_k}\right]_{j,k=1}^n$, $r_{0z\overline{z}}=\left[\frac{\partial^2r_0}{\partial z_j\partial\overline z_k}\right]_{j,k=1}^n$ as $n\times n$ matrices.

By the Bounded Inverse Theorem it suffices to show that $\Xi_{(f,q,\lambda)}(r_0,f_0'(0),f_0,0,1)$ is bijective, i.e. for $(\eta,\varphi,v)\in Q\times B^*\times\mb{C}^n$ there exists exactly one $(\widetilde{f},\widetilde{q},\widetilde{\lambda})\in B\times Q_0\times\mb{R}$ satisfying
\begin{equation}
(r_{0z}\circ f_0)\widetilde{f}+(r_{0\overline{z}}\circ f_0)\overline{\widetilde{f}}=\eta,
\label{al1}
\end{equation}
\begin{equation}
\pi(\zeta\widetilde{q}r_{0z}\circ f_0+\zeta (r_{0zz}\circ f_0)\widetilde{f}+\zeta(r_{0z\overline{z}}\circ f_0)\overline{\widetilde{f}})=\varphi,
\label{al2}
\end{equation}
\begin{equation}
\widetilde{f}'(0)-\widetilde{\lambda} f_0'(0)=v.
\label{al3}
\end{equation}
First we show that $\wi\lambda$ and $\wi f_1$ are uniquely determined. Observe that, in view of assumptions, (\ref{al1}) is just $$\frac{1}{2}\overline{\zeta}\widetilde{f}_1+\frac{1}{2}\zeta\overline{\widetilde{f}_1}=\eta$$ or equivalently
\begin{equation}
\re(\widetilde{f}_1/\zeta)=\eta\text{ (on }\TT).
\label{al4}
\end{equation}
Note that the equation (\ref{al4}) uniquely determines $\widetilde{f}_1/\zeta\in W^{2,2}(\TT,\CC)\cap\OO(\DD)\cap\cC(\CDD)$ up to an imaginary additive constant, which may be computed using (\ref{al3}). Actually, $\eta=\re G$ on $\TT$ for some function $G\in W^{2,2}(\TT,\CC)\cap\OO(\DD)\cap\cC(\CDD)$. To see this, let us expand $\eta(\zeta)=\sum_{k=-\infty}^{\infty}a_k\zeta^{k}$, $\zeta\in\TT$. From the equality $\eta(\zeta)=\ov{\eta(\zeta)}$, $\zeta\in\TT$, we get \begin{equation}\label{65}\sum_{k=-\infty}^{\infty}a_k\zeta^{k}=\sum_{k=-\infty}^{\infty}\ov a_k\zeta^{-k}=\sum_{k=-\infty}^{\infty}\ov a_{-k}\zeta^{k},\ \zeta\in\TT,\end{equation} so $a_{-k}=\ov a_k$, $k\in\ZZ$. Hence $$\eta(\zeta)=a_0+\sum_{k=1}^\infty 2\re(a_k\zeta^k)=\re\left(a_0+2\sum_{k=1}^\infty a_k\zeta^k\right),\ \zeta\in\TT.$$ Set $$G(\zeta):=a_0+2\sum_{k=1}^\infty a_k\zeta^k,\ \zeta\in\DD.$$ This series is convergent for $\zeta\in\DD$, so $G\in\OO(\DD)$. Further, the function $G$ extends continuously on $\CDD$ (to the function denoted by the same letter) and the extension lies in $W^{2,2}(\TT,\CC)$. Clearly, $\eta=\re G$ on $\TT$.

We are searching $C\in\RR$ such that the functions $\widetilde{f}_1:=\zeta(G+iC)$ and $\theta:=\im(\widetilde{f}_1/\zeta)$ satisfy $$\eta(0)+i\theta(0)=\widetilde{f}_1'(0)$$ and
$$\eta(0)+i\theta(0)-\widetilde{\lambda}\re{f_{01}'(0)}-i\widetilde{\lambda}\im{{f_{01}'(0)}}=\re{v_1}+i\im{v_1}.$$ But $$\eta(0)-\widetilde{\lambda}\re{f_{01}'(0)}=\re{v_1},$$ which yields $\widetilde{\lambda}$ and then $\theta(0)$, consequently the number $C$.
Having $\widetilde{\lambda}$ and once again using (\ref{al3}), we find uniquely determined $\widetilde{f}_2'(0),\ldots,\widetilde{f}_n'(0)$.

Therefore, the equations $\eqref{al1}$ and $\eqref{al3}$ are satisfied by uniquely determined $\wi f_1$, $\wi\lambda$ and $\widetilde{f}_2'(0),\ldots,\widetilde{f}_n'(0)$.

Consider (\ref{al2}), which is the system of $n$ equations with unknown $\widetilde{q},\widetilde{f}_2,\ldots,\widetilde{f}_n$. Observe that $\widetilde{q}$ appears only in the first of the equations and the remaining $n-1$ equations mean exactly that the mapping
\begin{equation}
\zeta(r_{0\widehat{z}\widehat{z}}\circ f_0)
\widehat{\widetilde{f}}+\zeta(r_{0\widehat{z}\widehat{\overline{z}}}\circ f_0)\widehat{\overline{\widetilde{f}}}-\psi
\label{al5}
\end{equation}
extends holomorphically on $\mb{D}$, where $\widehat{a}:=(a_{2},\ldots,a_{n})$ and $\psi\in W^{2,2}(\TT,\mb{C}^{n-1})$ may be obtained from $\varphi$ and $\widetilde{f}_1$. Indeed, to see this, write (\ref{al2}) in the form $$\pi(F_{1}+\zeta F_{2}+\zeta F_{3})=(\phi_1,\ldots,\phi_n),$$ where $$F_1:=(\wi q,0,\ldots,0),$$$$F_2:=(A_{j})_{j=1}^n,\ A_{j}:=\sum\limits_{k=1}^n(r_{0z_jz_k}\circ f_0)\widetilde{f}_k,$$$$F_3=(B_{j})_{j=1}^n,\ B_{j}:=\sum\limits_{k=1}^n(r_{0z_j\ov z_k}\circ f_0)\overline{\widetilde{f}_k}.$$ It follows that $$\widetilde{q}+\zeta A_1+\zeta B_1-\phi_1$$ and $$\zeta A_j+\zeta B_j-\phi_j,\ j=2,\ldots,n,$$ extend holomorphically on $\mb{D}$ and $$\psi:=\left(\phi_j-\zeta(r_{0z_jz_1}\circ f_0)\widetilde{f}_1-\zeta(r_{0z_j\ov z_1}\circ f_0)\overline{\widetilde{f}_1}\right)_{j=2}^n.$$
Put $$g(\zeta):=\widehat{\widetilde{f}}(\zeta)/\zeta,\quad\alpha(\zeta):=\zeta^2r_{0\widehat{z}\widehat{z}}(f_0(\zeta)),
\quad\beta(\zeta):=r_{0\widehat{z}\widehat{\overline{z}}}(f_0(\zeta)).$$

Observe that $\alpha(\zeta)$, $\beta(\zeta)$ are the $(n-1)\times(n-1)$ matrices depending real analytically on $\zeta$ and $g(\zeta)$ is a column vector in $\mb{C}^{n-1}$. This allows us to reduce \eqref{al5} to the following problem: we have to find a unique $g\in W^{2,2}(\TT,\mb{C}^{n-1})\cap\OO(\DD)\cap\cC(\CDD)$ such that \begin{equation}
\alpha g+\beta\overline{g}-\psi\text{ extends holomorphically on $\mb{D}$ and } g(0)={\widehat{\widetilde{f}'}}(0).
\label{al6}
\end{equation}
The fact that every $f_0(\zeta)$ is a point of strong linear convexity of the domain $D_0$ may be written as
\begin{equation}
|X^T\alpha(\zeta)X|<X^{T}\beta(\zeta)\overline{X},\ \zeta\in\TT,\ X\in(\mb{C}^{n-1})_*.
\label{al7}
\end{equation}

Note that $\beta(\zeta)$ is self-adjoint and strictly positive, hence using Proposition \ref{12} we get a mapping $H\in\OO(\CDD,\CC^{(n-1)\times(n-1)})$ such that $\det H\neq 0$ on $\CDD$ and $HH^*=\beta$ on $\TT$. Using this notation, (\ref{al6}) is
equivalent to
\begin{equation}
H^{-1}\alpha g+H^*\overline{g}-H^{-1}\psi\text{ extends holomorphically on $\mb{D}$}
\label{al8}
\end{equation}
or, if we denote $h:=H^Tg$, $\gamma:=H^{-1}\alpha (H^T)^{-1}$, to
\begin{equation}
\gamma h+\overline{h}-H^{-1}\psi\text{ extends holomorphically on $\mb{D}.$}
\label{al9}
\end{equation}

For any $\zeta\in\TT$ the operator norm of the symmetric matrix $\gamma(\zeta)$ is uniformly less than 1. In fact, from (\ref{al7}) for any $X\in\mb{C}^{n-1}$ with $|X|=1$ \begin{multline*}|X^{T}\gamma(\zeta)X|=|X^{T}H(\zeta)^{-1}\alpha(\zeta)(H(\zeta)^T)^{-1}X|<X^TH(\zeta)^{-1}\beta(\zeta)
\overline{(H(\zeta)^T)^{-1}X}=\\=X^TH(\zeta)^{-1}H(\zeta)H(\zeta)^*\overline{(H(\zeta)^T)^{-1}}
\overline{X}=|X|^2=1,\end{multline*} so, by the compactness argument, $|X^{T}\gamma(\zeta)X|\leq 1-\wi\eps$ for some $\wi\eps>0$ independent on $\zeta$ and $X$. Thus $\|\gamma(\zeta)\|\leq 1-\wi\eps$ by Proposition \ref{59}.

We have to prove that there is a unique solution $h\in W^{2,2}(\TT,\CC^{n-1})\cap\OO(\DD)\cap\cC(\CDD)$ of (\ref{al9}) such that $h(0)=a$ with a given $a\in\CC^{n-1}$.

Define the operator $$P:W^{2,2}(\TT,\mb{C}^{n-1})\ni\sum_{k=-\infty}^{\infty}a_k\zeta^{k}\longmapsto\overline{\sum_{k=-\infty}^{-1}a_k\zeta^{k}}\in W^{2,2}(\TT,\mb{C}^{n-1}),$$ where $a_k\in\CC^{n-1}$, $k\in\ZZ$.

We will show that a mapping $h\in
W^{2,2}(\TT,\mb{C}^{n-1})\cap\OO(\DD)\cap\cC(\CDD)$ satisfies (\ref{al9}) and $h(0)=a$ if and only if it is a fixed point of the mapping $$K:W^{2,2}(\TT,\mb{C}^{n-1})\ni h\longmapsto P(H^{-1}\psi-\gamma h)+a\in W^{2,2}(\TT,\mb{C}^{n-1}).$$

Indeed, take $h\in
W^{2,2}(\TT,\mb{C}^{n-1})\cap\OO(\DD)\cap\cC(\CDD)$ and suppose that $h(0)=a$ and $\gamma h+\overline{h}-H^{-1}\psi$ extends holomorphically on $\mb{D}$. Then $$h=a+\sum_{k=1}^{\infty}a_k\zeta^{k},\quad\overline{h}=\overline{a}+\sum_{k=1}^{\infty}\overline a_k\zeta^{-k}=\sum_{k=-\infty}^{-1}\overline a_{-k}\zeta^{k}+\overline{a},$$ $$P(h)=0,\quad P(\overline{h})=\sum_{k=1}^{\infty}a_k\zeta^{k}=h-a$$ and $$P(\gamma h+\overline{h}-H^{-1}\psi)=0,$$ which implies $$P(H^{-1}\psi-\gamma h)=h-a$$ and finally $K(h)=h$. Conversely, suppose that $K(h)=h$. Then $$P(H^{-1}\psi-\gamma h)=h-a=\sum_{k=1}^{\infty}a_k\zeta^{k}+a_1-a,\quad P(h)=0$$ and
$$P(\overline{h})=\sum_{k=1}^{\infty}a_k\zeta^{k}=h-a_1,$$ from which follows that $$P(\gamma h+\overline{h}-H^{-1}\psi)=P(\overline{h})-P(H^{-1}\psi-\gamma h)=a-a_1$$ and $$P(\gamma h+\overline{h}-H^{-1}\psi)=0\text{ iff }a=a_1.$$ Observe that $h(0)=K(h)(0)=P(H^{-1}\psi-\gamma h)(0)+a=a$.

We shall make use of the Banach Fixed Point Theorem. To do this, consider $W^{2,2}(\TT,\CC^{n-1})$ equipped with the following norm $$\|h\|_{\varepsilon}:=\|h\|_L+\varepsilon\|h'\|_L+
\varepsilon^2\|h''\|_L,$$ where $\eps>0$ and $\|\cdot\|_L$ is the $L^2$-norm (it is a Banach space). We will prove that $K$ is a contraction with respect to the norm $\|\cdot\|_{\varepsilon}$ for sufficiently small $\eps$. Indeed, there is $\wi\eps>0$ such that for any $h_1,h_2\in W^{2,2}(\TT,\CC^{n-1})$
\begin{equation}
\|K(h_1)-K(h_2)\|_L=\|P(\gamma(h_2-h_1))\|_L\leq\|\gamma(h_2-h_1)\|_L\leq (1-\wi\eps)\|h_2-h_1\|_L.
\label{al10}
\end{equation}
Moreover,
\begin{multline}
\|K(h_1)'-K(h_2)'\|_L= \|P(\gamma h_2)'-P(\gamma h_1)'\|_L\leq\\
\leq\|(\gamma h_2)'-(\gamma h_1)'\|_L= \|\gamma '(h_2-h_1)+\gamma(h_2'-h_1')\|_L.
\label{al11}
\end{multline} Furthermore,
\begin{equation}
\|K(h_1)''-K(h_2)''\|_L\leq\|\gamma ''(h_2-h_1)\|_L+2\|\gamma '(h_2'-h_1')\|_L+\|\gamma
(h_1''-h_2'')\|_L.\label{al12}
\end{equation}
Using the finiteness of $\|\gamma '\|$, $\|\gamma ''\|$ and putting (\ref{al10}), (\ref{al11}), (\ref{al12}) together we see that there exists $\varepsilon>0$ such that $K$ is a contraction w.r.t. the norm $\|\cdot\|_{\varepsilon}$.

We have found $\widetilde{f}$ and $\widetilde{\lambda}$ satisfying (\ref{al1}), (\ref{al3}) and the last $n-1$ equations from (\ref{al2}) are satisfied. 

It remains to show that there exists a unique $\widetilde{q}\in Q_0$ such that $\widetilde{q}+\zeta A_1+\zeta B_1-\varphi_1$ extends holomorphically on $\mb{D}$.

Comparing the coefficients as in \eqref{65}, we see that if $$\pi(\zeta A_1+\zeta B_1-\varphi_1)=\sum_{k=-\infty}^{-1}a_k\zeta^{k}$$
then $\widetilde{q}$ has to be taken as $$-\sum_{k=-\infty}^{-1}a_k\zeta^{k}-\sum_{k=0}^{\infty}b_k\zeta^{k}$$
with $b_k:=\overline a_{-k}$ for $k\geq 1$ and $b_0\in\RR$ uniquely determined by $\widetilde{q}(1)=0$.\\

Let us show that the proof of the second Lemma follows from the proof of the first one.
Since $\wi\Xi$ is real analytic it suffices to prove that the derivative $$\wi\Xi_{(f,q,\xi)}(r_0,f_0(\xi_0),f_0,0,\xi_0):B\times Q_0\times\RR\longrightarrow Q\times{B^*}\times\mb{C}^n$$ is invertible.
For $(\widetilde{f},\widetilde{q},\widetilde{\xi})\in B\times Q_0\times\RR$ we get
\begin{multline*}
\wi\Xi_{(f,q,\xi)}(r_0,f_0(\xi_0),f_0,0,\xi_0)(\widetilde{f},\widetilde{q},\widetilde{\xi})=\left.\frac{d}{dt}
\wi\Xi(r_0,f_0(\xi_0),f_0+t\widetilde{f},t\widetilde{q},\xi_0+t\widetilde{\xi})\right|_{t=0}=\\
=((r_{0z}\circ f_0)\widetilde{f}+(r_{0\overline{z}}\circ f_0)\overline{\widetilde{f}},
\pi(\zeta\widetilde{q}r_{0z}\circ f_0+\zeta(r_{0zz}\circ f_0)\widetilde{f}+\zeta(r_{0z\overline{z}}\circ f_0)\overline{\widetilde{f}}),\widetilde{f}(\xi_0)+\wi\xi f_0'(\xi_0)).
\end{multline*}
We have to show that for $(\eta,\varphi,w)\in Q\times B^*\times\mb{C}^n$ there exists exactly one $(\widetilde{f},\widetilde{q},\widetilde{\xi})\in B\times Q_0\times\RR$ satisfying
\begin{equation}
(r_{0z}\circ f_0)\widetilde{f}+(r_{0\overline{z}}\circ f_0)\overline{\widetilde{f}}=\eta,
\label{1al1}
\end{equation}
\begin{equation}
\pi(\zeta\widetilde{q}r_{0z}\circ f_0+\zeta (r_{0zz}\circ f_0)\widetilde{f}+\zeta(r_{0z\overline{z}}\circ f_0)\overline{\widetilde{f}})=\varphi,
\label{1al2}
\end{equation}
\begin{equation}
\wi f(\xi_0)+\wi\xi f_0'(\xi_0)=w.
\label{1al3}
\end{equation}
The equation (\ref{1al1}) turns out to be
\begin{equation}
\re(\widetilde{f}_1/\zeta)=\eta\text{ (on }\TT).
\label{1al4}
\end{equation}
The equation above uniquely determines $\widetilde{f}_1/\zeta\in W^{2,2}(\TT,\CC)\cap\OO(\DD)\cap\cC(\CDD)$ up to an imaginary additive constant, which may be computed using (\ref{1al3}). Indeed, there exists $G\in W^{2,2}(\TT,\CC)\cap\OO(\DD)\cap\cC(\CDD)$ such that $\eta=\re G$ on $\TT$. We are searching $C\in\RR$ such that the functions $\widetilde{f}_1:=\zeta(G+iC)$ and $\theta:=\im(\widetilde{f}_1/\zeta)$ satisfy  $$\xi_0\eta(\xi_0)+i\xi_0\theta(\xi_0)=\widetilde{f}_1(\xi_0)$$ and $$\xi_0(\eta(\xi_0)+i\theta(\xi_0))+\widetilde{\xi}\re{f_{01}'(\xi_0)}+i\widetilde{\xi}\im{{f_{01}'(\xi_0)}}=
\re{w_1}+i\im{w_1}.$$ But $$\xi_0\eta(\xi_0)+\widetilde{\xi}\re{f_{01}'(\xi_0)}=\re{w_1},$$ which yields $\widetilde{\xi}$ and then $\theta(\xi_0)$, consequently the number $C$. Having $\widetilde{\xi}$ and once again using (\ref{1al3}), we find uniquely determined $\widetilde{f}_2(\xi_0),\ldots,\widetilde{f}_n(\xi_0)$.

Therefore, the equations $\eqref{1al1}$ and $\eqref{1al3}$ are satisfied by uniquely determined $\wi f_1$, $\wi\xi$ and $\widetilde{f}_2(\xi_0),\ldots,\widetilde{f}_n(\xi_0)$.

In the remaining part of the proof we change the second condition of \eqref{al6} to $$g(\xi_0)={\widehat{\widetilde{f}}}(\xi_0)/\xi_0$$ and we have to prove that there is a unique solution $h\in W^{2,2}(\TT,\CC^{n-1})\cap\OO(\DD)\cap\cC(\CDD)$ of (\ref{al9}) such that $h(\xi_0)=a$ with a given $a\in\CC^{n-1}$. Let $\tau$ be an automorphism of $\DD$ (so it extends holomorphically near $\CDD$), which maps $0$ to $\xi_0$, i.e. $$\tau(\xi):=\frac{\xi_0-\xi}{1-\ov\xi_0\xi},\ \xi\in\DD.$$ Let the maps $P,K$ be as before. Then $h\in W^{2,2}(\TT,\mb{C}^{n-1})\cap\OO(\DD)\cap\cC(\CDD)$ satisfies
(\ref{al9}) and $h(\xi_0)=a$ if and only if $h\circ\tau\in W^{2,2}(\TT,\mb{C}^{n-1})\cap\OO(\DD)\cap\cC(\CDD)$ satisfies (\ref{al9}) and $(h\circ\tau)(0)=a$. We already know that there is exactly one $\wi h\in W^{2,2}(\TT,\mb{C}^{n-1})\cap\OO(\DD)\cap\cC(\CDD)$ satisfying (\ref{al9}) and $\wi h(0)=a$. Setting $h:=\wi h\circ\tau^{-1}$, we get the claim.
\end{proof}

\subsection{Topology in the class of domains with real analytic boundaries}\label{topol}

We introduce a concept of a domain being close to some other domain. Let $D_0\su\mb{C}^n$ be a bounded domain with real analytic boundary. Then there exist a neighborhood $U_0$ of $\partial D_0$ and a real analytic defining function $r_0:U_0\longrightarrow\mb{R}$ such that $\nabla r_0$ does not vanish in $U_0$ and $$D_0\cap U_0=\{z\in U_0:r_0(z)<0\}.$$

\begin{dff}
We say that domains $D$ \textit{tend to} $D_0$ $($or are \textit{close to} $D_0${$)$} if one can choose their defining functions $r\in X_0$ such that $r$ tend to $r_0$ in $X_0$.
\end{dff}

\begin{remm} If $r\in X_0$ is near to $r_0$ with respect to the topology in $X_0$, then $\{z\in U_0:r(z)=0\}$ is a compact real analytic hypersurface which bounds a bounded domain. We denote it by $D^{r}$.

Moreover, if $D^{r_0}$ is strongly linearly convex then a domain $D^r$ is also strongly linearly convex provided that $r$ is near $r_0$.
\end{remm}

\subsection{Statement of the main result of this section}

\begin{remm}\label{f} Assume that $D^r$ is a strongly linearly convex domain bounded by a real analytic hypersurface $\{z\in U_0:r(z)=0\}$. Let $\xi\in(0,1)$ and $w\in(\CC^n)_*$.

Then a function $f\in B_0$ satisfies the conditions $$f\text{ is a weak stationary mapping of }D^r,\ f(0)=0,\ f(\xi)=w$$ if and only if there exists $q\in Q_0$ such that $q>-1$ and $\wi\Xi(r,w,f,q,\xi)=0$.

Actually, from $\wi\Xi(r,w,f,q,\xi)=0$ we deduce immediately that $r\circ f=0$ on $\TT$, $f(\xi)=w$ and $\pi(\zeta(1+q)(r_z\circ f))=0$. From the first equality we get $f(\TT)\subset \partial D^{r}$. From the last one we deduce that the condition (3') of Definition~\ref{21} is satisfied (with $\rho:=(1+q)|r_z\circ f|$). Since $D^{r}$ is strongly linearly convex, $\ov{D^r}$ is polynomially convex (use the fact that projections of $\CC$-convex domains are $\CC$-convex, as well, and the fact that $D^r$ is smooth). In particular, $$f(\CDD)=f(\widehat{\TT})\subset\widehat{f(\TT)}\subset\widehat{\ov{D^r}}=\ov{D^r},$$ where $\wh S:=\{z\in\CC^m:|P(z)|\leq\sup_S|P|\text{ for any polynomial }P\in\CC[z_1,\ldots,z_m]\}$ is the polynomial hull of a set $S\su\CC^m$. 

Note that this implies $f(\DD)\su D^r$ --- this follows from the fact that $\pa D^r$ does not contain non-constant analytic discs (as $D^r$ is strongly pseudoconvex).

The opposite implication is clear.

\bigskip

In a similar way we show that for any $v\in(\CC^n)_*$ and $\lambda>0$, a function $f\in B_0$ satisfies the conditions $$f\text{ is a weak stationary mapping of }D^r,\ f(0)=0,\ f'(0)=\lambda v$$ if and only if there exists $q\in Q_0$ such that $q>-1$ and $\Xi(r,v,f,q,\lambda)=0$.

\end{remm}

\begin{propp}\label{13} Let $D_0\su\CC^n$, $n\geq 2$, be a strongly linearly convex domain with real analytic boundary and let $f_0:\DD\longrightarrow D_0$ be an $E$-mapping.

$(1)$ Let $\xi_0\in(0,1)$. Then there exist a neighborhood $W_0$ of $(r_0,f_0(\xi_0))$ in $X_0\times D_0$ and real analytic mappings $$\Lambda:W_0\longrightarrow\mc{C}^{1/2}(\overline{\mb{D}}),\ \Omega:W_0\longrightarrow(0,1)$$ such that $$\Lambda(r_0,f_0(\xi_0))=f_0,\ \Omega(r_0,f_0(\xi_0))=\xi_0$$ and for any $(r,w)\in W_0$ the mapping
$f:=\Lambda(r,w)$ is an $E$-mapping of $D^{r}$ satisfying $$f(0)=f_0(0)\text{ and }f(\Omega(r,w))=w.$$

$(2)$ There exist a neighborhood $V_0$ of $(r_0,f_0'(0))$ in $X_0\times\mb{C}^n$
and a real analytic mapping $$\Gamma:V_0\longrightarrow\mc{C}^{1/2}(\overline{\mb{D}})$$ such that $$\Gamma(r_0,f_0'(0))=f_0$$ and for any $(r,v)\in V_0$ the mapping $f:=\Gamma(r,v)$ is an $E$-mapping of $D^{r}$ satisfying $$f(0)=f_0(0)\text{ and }f'(0)=\lambda v\text{ for some }\lambda>0.$$
\end{propp}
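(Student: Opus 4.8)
The plan is to reduce to Situation~(\dag) by means of Proposition~\ref{11}, to solve the defining equation there with Lemmas~\ref{cruciallemma} and~\ref{cruciallemma1}, to upgrade the implicit‑function‑theorem solutions to genuine $E$-mappings via Remark~\ref{f} and Proposition~\ref{6}, and finally to transport everything back through the biholomorphism. First I would apply Proposition~\ref{11}: it yields a biholomorphism $\Phi$ from a neighborhood of $\ov{D_0}$ onto a neighborhood of the closure of a domain $G_0\su\CC^n$ with $\Phi(D_0)=G_0$ and $g_0:=\Phi\circ f_0$ of the form $g_0(\zeta)=(\zeta,0,\dots,0)$, $\nu_{G_0}(g_0(\zeta))=(\zeta,0,\dots,0)$, each $g_0(\zeta)$ a point of strong linear convexity of $G_0$. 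Choosing a real analytic defining function $s_0$ of $G_0$ near $\pa G_0$, normalized by $|\nabla s_0|=1$ on $g_0(\TT)$ (so $s_{0z}(g_0(\zeta))=(\ov\zeta/2,0,\dots,0)$), the data $G_0$, $g_0$, $s_0$ satisfy Situation~(\dag). Since replacing $r_0$ by a positive real analytic multiple alters neither $X_0$ nor the family $\{D^r\}$, I may assume $r_0=s_0\circ\Phi$ near $\pa D_0$; then $r\longmapsto r\circ\Phi^{-1}$ is a bounded $\RR$-linear, hence real analytic, map sending a neighborhood of $r_0$ in $X_0$ into a neighborhood of $s_0$ in the Banach space $X_0'$ for $G_0$, and carrying $D^r$ onto $\Phi(D^r)=G^{r\circ\Phi^{-1}}$. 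One also checks that $g_0$ is an $E$-mapping of $G_0$: here $\varphi_{z,g_0}(\zeta)=\langle z-(\zeta,0,\dots,0),(\zeta,0,\dots,0)\rangle=(z_1-\zeta)\ov\zeta$, which for $z=g_0(0)=0$ is the constant $-1$ on $\TT$, so $\wind\varphi_{0,g_0}=0$.

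Next I would solve in Situation~(\dag). For part~(1), fix $\xi_0\in(0,1)$. Since $\wi\Xi\bigl(s_0,g_0(\xi_0),g_0,0,\xi_0\bigr)=\bigl(s_0\circ g_0,\ \pi(\zeta\,s_{0z}\circ g_0),\ 0\bigr)=0$ — the middle entry vanishing because $\zeta\,s_{0z}(g_0(\zeta))\equiv(1/2,0,\dots,0)$ extends holomorphically — Lemma~\ref{cruciallemma1} applies with the implicit function theorem pinned at this point, giving a neighborhood $\wi W_0$ of $(s_0,g_0(\xi_0))$ and a real analytic $\wi\Upsilon:\wi W_0\to B_0\times Q_0\times(0,1)$ with $\wi\Xi(s,w,\wi\Upsilon(s,w))=0$ and $\wi\Upsilon(s_0,g_0(\xi_0))=(g_0,0,\xi_0)$. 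Writing $\wi\Upsilon(s,w)=(f,q,\xi)$, after shrinking $\wi W_0$ we have $q>-1$, so by Remark~\ref{f} the map $f$ is a weak stationary mapping of $G^s$ with $f(\DD)\su G^s$, $f(0)=0$, $f(\xi)=w$; by Proposition~\ref{6} it is a stationary mapping of $G^s$. It is moreover an $E$-mapping: $\varphi_{0,f}$ is non-vanishing on $\TT$ (strong linear convexity of $G^s$ along $f(\TT)$, an open condition valid for $(s,w)$ near the base point), it depends continuously on $(s,w)$, hence its winding number over $\TT$ is locally constant and equal to $\wind\varphi_{0,g_0}=0$; and $0=f(0)\in G^s$, so condition~(4) of Definition~\ref{21e} holds.

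Then I would transport back. Put $W_0:=\{(r,w)\in X_0\times D_0:(r\circ\Phi^{-1},\Phi(w))\in\wi W_0\}$ and define $\Lambda(r,w):=\Phi^{-1}\circ\wi\Upsilon_1(r\circ\Phi^{-1},\Phi(w))$, $\Omega(r,w):=\wi\Upsilon_3(r\circ\Phi^{-1},\Phi(w))$. These are real analytic ($\wi\Upsilon$ is, $B_0\hookrightarrow\cC^{1/2}(\CDD)$ continuously, and post-composition with the fixed holomorphic map $\Phi^{-1}$ is real analytic on $\cC^{1/2}(\CDD)$ by the power-series estimate used for the map $T$ in the proof of Lemma~\ref{cruciallemma}); they satisfy $\Lambda(r_0,f_0(\xi_0))=\Phi^{-1}\circ g_0=f_0$ and $\Omega(r_0,f_0(\xi_0))=\xi_0$; and for $(r,w)\in W_0$, by Remark~\ref{rem:theta}, $\Lambda(r,w)=\Phi^{-1}\circ f$ is a stationary mapping of $\Phi^{-1}(G^s)=D^r$ with $\Lambda(r,w)(0)=\Phi^{-1}(0)=f_0(0)$ and $\Lambda(r,w)(\Omega(r,w))=\Phi^{-1}(\Phi(w))=w$, while it is an $E$-mapping of $D^r$ because the winding number of $\varphi_{f_0(0),\,\Phi^{-1}\circ f}$ over $\TT$ is again locally constant and vanishes at the base point (equivalently, transport condition~(4) through $\Phi^{-1}$ as in the proof of Proposition~\ref{11}). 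For part~(2) one uses instead $\Xi(s_0,g_0'(0),g_0,0,1)=0$, $\Upsilon(s_0,g_0'(0))=(g_0,0,1)$, and, since $g_0'(0)=(1,0,\dots,0)=\Phi'(f_0(0))f_0'(0)$, sets $\Gamma(r,v):=\Phi^{-1}\circ\Upsilon_1\bigl(r\circ\Phi^{-1},\Phi'(f_0(0))v\bigr)$; the chain rule then gives $\Gamma(r,v)'(0)=\Phi'(f_0(0))^{-1}\bigl(\lambda\,\Phi'(f_0(0))v\bigr)=\lambda v$ with $\lambda=\Upsilon_3(\cdot)>0$, as required.

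The genuine analytic work is already carried out in Lemmas~\ref{cruciallemma} and~\ref{cruciallemma1}; in the present argument the delicate steps are, first, the bookkeeping of the reduction — making perturbations of $D_0$ and of $G_0$ correspond and checking that the composition operators are real analytic — and, above all, verifying condition~(4) of Definition~\ref{21e} for the perturbed mapping. The latter relies on the continuity of the winding number, so one must guarantee that $\varphi_z$ (for a fixed interior base point such as $f_0(0)$) remains non-vanishing on $\TT$ throughout a whole neighborhood of the data; this is exactly where the stability of the strong linear convexity of $D^r$ along $f(\TT)$ under perturbation of $r$ is used, and it is also the place where one should double-check that the assertions of Remark~\ref{f} invoked here depend only on this \emph{local} convexity and not on global linear convexity of $G^s$.
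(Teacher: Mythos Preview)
Your proof is correct and follows essentially the same route as the paper: reduce to Situation~(\dag) via Proposition~\ref{11}, invoke Lemmas~\ref{cruciallemma} and~\ref{cruciallemma1} to solve $\Xi=0$ and $\wi\Xi=0$, upgrade to stationary mappings via Remark~\ref{f} and Proposition~\ref{6}, verify condition~(4) by continuity of the winding number, and transport back through $\Phi^{-1}$ using Remark~\ref{rem:theta}. The paper's presentation is terser but the skeleton is identical; your added details (the explicit computation $\varphi_{0,g_0}\equiv -1$, the real analyticity of post-composition with $\Phi^{-1}$, the chain-rule bookkeeping for $\Gamma$) are all sound.

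Regarding your closing caveat about Remark~\ref{f}: in this application the neighborhood $U_0$ contains the full boundary $\partial G_0$ (the paper notes explicitly ``here $\partial D_0$ is contained in $U_0$''), because $\Phi$ is a biholomorphism of a neighborhood of $\overline{D_0}$ onto a neighborhood of $\overline{G_0}$. Hence for $s$ close to $s_0$ in $X_0$ the domain $G^s$ is \emph{globally} strongly linearly convex with real analytic boundary, so the polynomial-convexity argument in Remark~\ref{f} applies as stated and no local-versus-global issue arises.
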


\begin{proof}

Observe that Proposition \ref{11} provides us with a mapping $g_0=\Phi\circ f_0$ and a domain $G_0:=\Phi(D_0)$ giving a data for situation (\dag) (here $\partial D_0$ is contained in $U_0$). Clearly, $\rho_0:=r_0\circ\Phi^{-1}$ is a defining function of $G_0$.

Using Lemmas \ref{cruciallemma}, \ref{cruciallemma1} we get neighborhoods $V_0$, $W_0$ of $(\rho_0, g_0'(0))$, $(\rho_0,g_0(\xi_0))$ respectively and real analytic mappings $\Upsilon$, $\wi\Upsilon$ such that $ \Xi(\rho,v,\Upsilon(\rho,v))=0$ on $V_0$ and $ \wi\Xi(\rho,w,\wi\Upsilon(\rho,w))=0$ on $W_0$. Define $$\wh\Lambda:=\pi_B\circ\wi\Upsilon,\quad\Omega:=\pi_\RR\circ\wi\Upsilon,\quad\wh\Gamma:=\pi_B\circ\Upsilon,$$ where $$\pi_B:B\times Q_0\times\mb{R}\longrightarrow B,\quad\pi_\RR:B\times Q_0\times\mb{R}\longrightarrow\RR,\ $$ are the projections.

If $\rho$ is sufficiently close to $\rho_0$, then the hypersurface $\{\rho=0\}$ bounds a strongly linearly convex domain. Moreover, then $\wh\Lambda(\rho,w)$ and $\wh\Gamma(\rho,v)$ are extremal mappings in $G^{\rho}$ (see Remark~\ref{f}).

Composing $\wh\Lambda(\rho,w)$ and $\wh\Gamma(\rho,v)$ with $\Phi^{-1}$ and making use of Remark \ref{rem:theta} we get weak stationary mappings in $D^r$, where $r:=\rho\circ\Phi$. To show that they are $E$-mappings we proceed as follows. If $D^r$ is sufficiently close to $D_0$ (this depends on a distance between $\rho$ and $\rho_0$), the domain $D^r$ is strongly linearly convex, so by the results of Section \ref{55} $$\Lambda(r,w):=\Phi^{-1}\circ\wh\Lambda(\rho,w)\text{\  and\  }\Gamma(r,v):=\Phi^{-1}\circ\wh\Gamma(\rho,v)$$ are stationary mappings. Moreover, they are close to $f_0$ provided that $r$ is sufficiently close to $r_0$. Therefore, their winding numbers are equal. Thus $f$ satisfies condition (4) of Definition~\ref{21e}, i.e. $f$ is an $E$-mapping.
\end{proof}

\section{Localization property}

\begin{prop}\label{localization} Let $D\su\mathbb C^n$, $n\geq 2$, be a domain. Assume that $a\in\partial D$ is such that $\partial D$ is real analytic and strongly convex in a neighborhood of $a$. Then for any sufficiently small neighborhood $V_0$ of $a$ there is a weak stationary mapping of $D\cap V_0$ such that $f(\mathbb T)\su\partial D$.

In particular, $f$ is a weak stationary mapping of $D$.
\end{prop}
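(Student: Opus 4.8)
The plan is to construct $f$ by a \emph{parabolic blow-up at $a$ followed by the perturbation machinery of Proposition~\ref{13}}, and then to scale back. I first normalize: translating $a$ to the origin and applying a unitary rotation I arrange $\nu_D(0)=(1,0,\ldots,0)$, so that near $0$ the boundary is $\{r=0\}$ with $r$ real analytic and strongly convex, $r(z)=\RE z_1+Q(z)+O(|z|^3)$, where $Q$ is a real quadratic form whose restriction to the complex tangent directions $z'=(z_2,\ldots,z_n)$ is positive definite. The decisive point, stressed in the Remark right after Situation~(\dag), is that the Banach space $X_0$, and hence Proposition~\ref{13}, only require real analyticity and strong linear convexity of the boundary \emph{in a neighborhood of the boundary circle $f_0(\TT)$ of the model disc} --- exactly the information available to us near $a$.

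Next I set up the model. Applying the anisotropic dilation $\delta_t(z):=(z_1/t^2,z'/t)$, which is complex-linear and hence biholomorphic, and rescaling the defining function by $r_t:=t^{-2}\,r\circ\delta_t^{-1}$, one checks using the real-analytic Taylor expansion of $r$ that $r_t$ converges, uniformly on a fixed complex neighborhood of $f_0(\TT)$, to $$r_0(w)=\RE w_1+Q(0,w'),$$ the defining function of a strongly convex Siegel-type model. I realize this model, away from the disc, as a genuine \emph{bounded} strongly convex domain $D_0$ with real analytic boundary carrying an explicit complex geodesic $f_0$ (a chord image transported from the ball), normalized by Proposition~\ref{11} to the form required by Situation~(\dag); since $D_0$ is convex, $f_0$ is automatically an $E$-mapping. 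In this way $r_t\to r_0$ in $X_0$, and the domains $\delta_t(D-a)$ coincide with $D^{r_t}$ near $f_0(\TT)$.

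Now I apply Proposition~\ref{13} to $(D_0,f_0)$: for $t$ small the perturbed domain $D^{r_t}$ is strongly linearly convex and carries an $E$-mapping $f_t$ close to $f_0$ (say with $f_t(0)=f_0(0)$). Because $f_t\to f_0$ in $\cC^{1/2}(\CDD)$, with $f_0(\CDD)$ compact and $f_0(\DD)\su D_0$, the interior convergence $\delta_t(D-a)\to D_0$ yields $f_t(\DD)\su\delta_t(D-a)$ and $f_t(\TT)\su\pa\,\delta_t(D-a)$ for all small $t$, the boundary circle lying in the rescaled real-analytic strongly convex part of $\pa D$. Setting $\hat f_t:=\delta_t^{-1}\circ f_t$ and invoking Remark~\ref{rem:theta} (for the affine biholomorphism $\delta_t^{-1}$ together with the translation by $a$), $\hat f_t$ is a weak stationary mapping of $D$ with $\hat f_t(\TT)\su\pa D$.

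Finally, the boundary circle is localized: since $f_t(\CDD)$ stays in a fixed compact set while $\delta_t^{-1}$ contracts by a factor $O(t)$, we get $\hat f_t(\CDD)\longrightarrow\{a\}$ as $t\to 0$. Hence for any prescribed neighborhood $V_0$ of $a$ there is $t$ with $\hat f_t(\CDD)\su V_0$; then $\hat f_t(\DD)\su D\cap V_0$ and $\hat f_t(\TT)\su\pa D\cap V_0$, so $f:=\hat f_t$ is a weak stationary mapping of $D\cap V_0$ whose boundary circle lies in $\pa D$, and the coincidence of the local boundary data makes it a weak stationary mapping of $D$ as well. The main obstacle is the second paragraph: verifying that the parabolically rescaled real-analytic defining functions converge to the model in the \emph{$X_0$-norm} (uniform convergence of the complexified expansion on a fixed complex neighborhood of $f_0(\TT)$), and realizing the unbounded scaling limit as an admissible bounded strongly convex real-analytic domain $D_0$ equipped with an explicit $E$-mapping --- once this is secured, Proposition~\ref{13} and Remark~\ref{rem:theta} finish the argument.
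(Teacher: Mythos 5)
Your overall strategy --- rescale at $a$, identify an $X_0$-limit model near the boundary circle of a model disc, perturb with the implicit-function machinery, and transport back by the inverse rescaling so that the images shrink to $a$ --- is exactly the paper's strategy (your parabolic dilations $\delta_t$ are, up to the Cayley transform, the ball automorphisms $A_t$ the paper uses). But there is a genuine gap at the decisive step. The Siegel-type limit $\{\re w_1+Q(0,w')<0\}$ is unbounded, and it \emph{cannot} be realized as a bounded strongly convex domain with real analytic boundary agreeing with the paraboloid near the circle: a compact real analytic hypersurface coinciding on an open set with the connected, unbounded hypersurface $\{\re w_1+Q(0,w')=0\}$ would have to contain all of it, by the identity principle (an open-closed argument along the paraboloid). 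So the best available model is a bounded domain satisfying only the \emph{local} conditions of Situation (\dag). This matters because the results you invoke have global hypotheses: Proposition \ref{11} (which you use to normalize the model disc) and Proposition \ref{13} both require a bounded strongly linearly convex domain with real analytic boundary, and the proof of Proposition \ref{13} genuinely uses this --- it passes through Proposition \ref{11} and through Remark \ref{f}, whose polynomial-convexity argument needs global strong linear convexity of $D^{r}$; moreover, in your setting $\{r_t=0\}\cap U_0$ is not a compact hypersurface, so ``$D^{r_t}$'' and the notion of an $E$-mapping of it are not even defined. Your parenthetical claim that Proposition \ref{13} only requires boundary data near the circle is therefore not correct as stated; what is local is Lemma \ref{cruciallemma} (and Lemma \ref{cruciallemma1}) under Situation (\dag), and that is precisely what the paper's proof invokes, together with a normalization done by hand. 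The paper sidesteps the whole problem by osculating the sphere, $r(z)=-1+|z|^2+O(|z-a|^3)$, and using $A_t$: the limit model is then the ball --- bounded, real analytic, strongly convex --- whose flat equatorial disc is already in (\dag)-position.

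A second gap: the containment $f_t(\DD)\su\delta_t(D-a)$ does not follow from $f_t\to f_0$ plus ``interior convergence'' of the domains, because $f_0(\DD)$ is \emph{not} relatively compact in the model --- its closure meets the boundary along the whole circle, which is exactly where control is needed. The paper closes this point with a maximum-principle argument: the defining function can be chosen strongly convex, hence plurisubharmonic, near $a$, so (your $\delta_t$ being linear) $r_t\circ f_t$ is subharmonic on $\DD$, continuous on $\CDD$, and zero on $\TT$, whence negative on $\DD$; some such argument must be supplied, since the polynomial-hull argument of Remark \ref{f} is unavailable locally. With these repairs --- Lemma \ref{cruciallemma} in place of Proposition \ref{13}, an explicit normalization of the paraboloid disc $\zeta\longmapsto(-1,\zeta,0,\ldots,0)$ (e.g. via the Cayley transform, at which point your argument literally becomes the paper's), and the maximum principle for the containment --- your outline can be completed; the final steps (Remark \ref{rem:theta} and the shrinking of $\hat f_t(\CDD)$ to $a$) are fine as you wrote them.
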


\begin{proof} Let $r$ be a real analytic defining function in a neighborhood of $a$. The problem we are dealing with has a local character, so replacing $r$ with $r\circ\Psi$, where $\Psi$ is a local biholomorphism near $a$, we may assume that $a=(0,\ldots,0,1)$ and a defining function of $D$ near $a$ is $r(z)=-1+|z|^2+h(z-a)$, where $h$ is real analytic in a neighborhood of $0$ and $h(z)=O(|z|^3)$ as $z\to 0$ (cf. \cite{Rud}, p. 321).

Following \cite{Lem2}, let us consider the mappings
$$A_t(z):=\left((1-t^2)^{1/2}\frac{z'}{1+tz_n},\frac{z_n+t}{1+tz_n}\right),\quad z=(z',z_n)\in\CC^{n-1}\times\DD,\,\,t\in(0,1),$$ which restricted to $\BB_n$ are automorphisms. Let $$r_t(z):=\begin{cases}\frac{|1+tz_n|^2}{1-t^2}r(A_t(z)),&t\in(0,1),\\-1+|z|^2,&t=1.\end{cases}$$ It is clear that $f_{(1)}(\zeta)=(\zeta,0,\ldots,0)$, $\zeta\in\DD$ is a stationary mapping of $\mathbb B_n$. We want to have the situation (\dag) which will allow us to use Lemma \ref{cruciallemma} (or Lemma \ref{cruciallemma1}). Note that $r_t$ does not converge to $r_1$ as $t\to 1$. However, $r_t\to r_1$ in $X_0(U_0,U_0^{\mathbb C})$, where $U_0$ is a neighborhood of $f_{(1)}(\TT)$ contained in $\{z\in\mathbb C^n:\re z_n>-1/2\}$ and $U_0^{\mathbb C}$ is sufficiently small (remember that $h(z)=O(|z|^3)$).

Therefore, making use of Lemma \ref{cruciallemma} for $t$ sufficiently close to $1$ we obtain stationary mappings $f_{(t)}$ in $D_t:=\{z\in \mathbb C^n: r_t(z)<0,\ \re z_n>-1/2\}$ such that $f_{(t)}\to f_{(1)}$ in the $W^{2,2}$-norm (so also in the sup-norm). Actually, it follows from Lemma~\ref{cruciallemma} that one may take $f_{(t)}:=\pi_B\circ\Upsilon(r_t,f_{(1)}'(0))$ (keeping the notation from this lemma). The argument used in Remark~\ref{f} gives that $f_{(t)}$ satisfies conditions (1'), (2') and (3') of Definition~\ref{21}. Since the non-constant function $r\circ A_t\circ f_{(t)}$ is subharmonic on $\DD$, continuous on $\CDD$ and $r\circ A_t\circ f_{(t)}=0$ on $\TT$, we see from the maximum principle that $f_{(t)}$ maps $\DD$ in $D_t$. Therefore, $f_{(t)}$ are weak stationary mappings for $t$ close to $1$.

In particular, $$f_{(t)}(\DD)\subset 2\mathbb B_n \cap \{z\in\mathbb C^n:\re z_n>-1/2\}$$ provided that $t$ is close to $1$. The mappings $A_t$ have the following important property $$A_t(2\mathbb B_n\cap\{z\in\mathbb C^n:\re z_n>-1/2\})\to\{a\}$$ as $t\to 1$ in the sense of the Hausdorff distance.

Therefore, we find from Remark \ref{rem:theta} that $g_{(t)}:=A_t\circ f_{(t)}$ is a stationary mapping of $D$. Since $g_{(t)}$ maps $\DD$ onto arbitrarily small neighborhood of $a$ provided that $t$ is sufficiently close to $1$, we immediately get the assertion.
\end{proof}

\section{Proofs of Theorems \ref{lem-car} and \ref{main}}

We start this section with the following
\begin{lem}\label{lemat} For any different $z,w\in D$ $($resp. for any $z\in D$, $v\in(\CC^n)_*${$)$} there exists an $E$-mapping $f:\DD\longrightarrow D$ such that $f(0)=z$, $f(\xi)=w$ for some $\xi\in(0,1)$ $($resp. $f(0)=z$, $f'(0)=\lambda v$ for some $\lambda>0${$)$}.
\end{lem}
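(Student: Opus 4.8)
The plan is a continuity (``connectedness'') argument. Treat the two cases in parallel: let $\mc P$ be the parameter space $(D\times D)\setminus\Delta$ in the first case and $D\times(\CC^n)_*$ in the second, where $\Delta$ is the diagonal; since $n\geq 2$, in both cases $\mc P$ is connected, being a connected manifold with a subset of real codimension $\geq 2$ removed. Let $S\su\mc P$ consist of the data for which the lemma holds: $(z,w)\in S$ iff some $E$-mapping $f$ satisfies $f(0)=z$ and $f(\xi)=w$ for some $\xi\in(0,1)$, and $(z,v)\in S$ iff some $E$-mapping $f$ satisfies $f(0)=z$ and $f'(0)=\lambda v$ for some $\lambda>0$. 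I will show $S$ is non-empty, open and closed in $\mc P$; connectedness of $\mc P$ then forces $S=\mc P$, which is the assertion. For \emph{non-emptiness}: the compact hypersurface $\pa D$ has a point $a$ of strong convexity (for instance a point of $\pa D$ farthest from a fixed interior point, where the real boundary Hessian dominates that of the circumscribed sphere), near which $\pa D$ is real analytic, so Proposition~\ref{localization} supplies a weak stationary mapping $g$ of $D$ whose image lies in an arbitrarily small neighbourhood of $a$. By Proposition~\ref{6}, $g$ is a stationary mapping; and since the strong convexity of $\pa D$ near $a$ makes $D$ locally convex there, for $z\in D$ close to $a$ one has $\re\langle z-g(\zeta),\nu_D(g(\zeta))\rangle<0$ on $\TT$, so $\wind\phi_z=0$ and $g$ is an $E$-mapping. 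Composing $g$ with M\"obius automorphisms of $\DD$ (Proposition~\ref{3}) produces $E$-mappings realizing the data $(g(\zeta),g(\xi))$ for all $\zeta\neq\xi$ and $(g(\zeta),g'(\zeta))$, whence $S\neq\emptyset$.

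\emph{Openness.} Let $(z_0,w_0)\in S$ be witnessed by an $E$-mapping $f_0$ with $f_0(0)=z_0$, $f_0(\xi_0)=w_0$, and fix a real analytic defining function $r_0\in X_0$ of $D$. Proposition~\ref{13}(1), applied with the defining function frozen equal to $r_0$, shows that $(z_0,w)\in S$ for every $w$ near $w_0$, the base point remaining $z_0$. To move the base point, note that the automorphism $a(\zeta):=(\xi_0-\zeta)/(1-\xi_0\zeta)$ interchanges $0$ and $\xi_0$, so by Proposition~\ref{3} $f_0\circ a$ witnesses $(w_0,z_0)\in S$; thus $S$ is invariant under transposing the two points, and the previous step applied to $(w_0,z_0)$ moves $z_0$. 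Since the mappings furnished by Proposition~\ref{13} depend real analytically on the data, combining the two kinds of moves over a common compact neighbourhood yields a full neighbourhood of $(z_0,w_0)$ in $S$. In the vector case one uses Proposition~\ref{13}(2) to vary the direction at fixed base point, together with a reparametrization $f_0\circ a$ ($a$ an automorphism with $a(0)$ near $0$) to move the base point; since $n\geq 2$, as the direction varies the discs $f_0\circ a(\DD)$ have varying tangent lines at their centres, so these moves again fill a neighbourhood of $(z_0,v_0)$ (alternatively, the vector case can be deduced from the point case by letting $t\to 0^+$ in an $E$-mapping joining $z$ to $z+tv$).

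\emph{Closedness.} Let $p_m\to p$ in $\mc P$ with $p_m\in S$ realized by $E$-mappings $f_m$, $f_m(0)=z_m$. The $z_m$ stay in a compact subset of $D$, and $D$, being strongly linearly convex, is strongly pseudoconvex, so by Remark~\ref{D(c),4} there is a single $c$ with $(D,z_m)\in\mathcal D(c)$ for all large $m$. The uniform H\"older estimates of Propositions~\ref{7}--\ref{10b}, together with the two-sided bound on $\rho_m$ from Proposition~\ref{9}, bound $f_m$ and $\wi f_m$ uniformly in $\cC^{1/2}(\CDD)$ and $\rho_m$ uniformly in $\cC^{1/2}(\TT)$ and away from $0$; hence, along a subsequence, $f_m\to f$ and $\wi f_m\to\wi f$ uniformly on $\CDD$ and $\rho_m\to\rho$ uniformly on $\TT$. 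The limits satisfy $f,\wi f\in\OO(\DD)\cap\cC^{1/2}(\CDD)$, $f(\TT)\su\pa D$, $\rho>0$, $\wi f(\zeta)=\zeta\rho(\zeta)\ov{\nu_D(f(\zeta))}$ on $\TT$, and $f(\DD)\su D$ (no analytic discs in $\pa D$, cf.\ Remark~\ref{f}); so $f$ is a weak stationary mapping, hence a stationary mapping by Proposition~\ref{6}. For a fixed $z'\in D$ the nowhere-vanishing functions $\phi_{z',f_m}$ converge uniformly on $\TT$ to $\phi_{z',f}$, so $\wind\phi_{z',f}=0$ and $f$ is an $E$-mapping. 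Finally $f(0)=\lim z_m=z$; passing to a subsequence $\xi_m\to\xi\in[0,1]$, and $\xi\in\{0,1\}$ is impossible (it would force $z=w$, resp.\ $w=f(1)\in\pa D$), so $\xi\in(0,1)$ and $f(\xi)=\lim f_m(\xi_m)=\lim w_m=w$. In the vector case, writing $f_m'(0)=\lambda_m v_m$ with $|v_m|$ fixed, $\lambda_m$ is bounded (as $|f_m'(0)|$ is at most the diameter of $D$) and bounded away from $0$ (else the injective $E$-mapping $f$ would have $f'(0)=0$), so $\lambda_m\to\lambda\in(0,\infty)$ along a subsequence and $f'(0)=\lambda v$. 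Hence $p\in S$.

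I expect the closedness step to be the main obstacle: it rests entirely on the uniform H\"older bounds of Section~\ref{22}, on Proposition~\ref{6} to promote the weak stationary limit to a stationary mapping, on stability of the winding number under uniform convergence, and on the short but essential argument that the limiting parameter cannot degenerate. Once $S$ is known to be non-empty, open and closed in the connected space $\mc P$, we conclude $S=\mc P$, which is precisely the statement of the lemma.
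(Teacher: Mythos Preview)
Your continuity strategy, the non-emptiness via localization at a strongly convex boundary point, the closedness via the uniform $\cC^{1/2}$-estimates of Section~\ref{22}, and the point-case openness via Proposition~\ref{13}(1) together with the swap $f_0\mapsto f_0\circ a$ are all correct and match the paper's argument (the paper runs the open--closed argument along curves rather than on the full product $\mc P$, and first treats the strongly convex case via a deformation $D_t$ of the domain, but that warm-up is not needed for the general case).

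There is, however, a genuine gap in your openness argument in the \emph{vector} case. Reparametrizing by $a\in\mathrm{Aut}(\DD)$ moves the base point only along the single analytic disc $f_0(\DD)$, since $(f_0\circ a)(\DD)=f_0(\DD)$; combining this with Proposition~\ref{13}(2) you obtain base points in $\bigcup_{u\text{ near }v_0} f_u(\DD)$, but these discs all pass through $z_0$ with tangent directions near $[v_0]$, so near $z_0$ their union is only the \emph{cone} $\{z_0+tu:t\in\CC\text{ small},\ [u]\text{ near }[v_0]\}$, not a full neighbourhood of $z_0$. A dimension count confirms this: your parameters $(u,a)$ have real dimension at most $2n+2$, while the target $(z,[v])\in D\times S^{2n-1}$ has real dimension $4n-1$, so the map cannot be locally surjective for $n\geq 2$. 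The paper handles this by decoupling the two moves: first one shows that every $z\in D$ is the base point of \emph{some} $E$-mapping (open--closed on a curve in $D$, using Proposition~\ref{13}(1) and the swap exactly as in your point case), and only then uses Proposition~\ref{13}(2) at the new base point to reach any prescribed direction. Your parenthetical alternative---deduce the vector case from the already-established point case by taking $E$-mappings for $(z,z+tv)$ and letting $t\to 0^+$---does work, but it is a limit (closedness-type) argument, not an openness one: you must check $\xi_t\to 0$ (since $f$ is injective and $f(\TT)\su\pa D$) and then extract $f'(0)\in\RR_{>0}v$ from $f_t(\xi_t)-f_t(0)=tv$ together with $f'(0)\neq 0$ (which follows from $f'\bullet\wi f\equiv 1$). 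Either of these two fixes closes the gap.
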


\begin{proof}
Fix different $z,w\in D$ (resp. $z\in D$, $v\in(\CC^{n})_*$).

First, consider the case when $D$ is bounded strongly convex with real analytic boundary. Without loss of generality one may assume that $0\in D\Subset\BB_n$. We need some properties of the Minkowski functionals.

Let $\mu_G$ be a Minkowski functional of a domain $G\subset\CC^n$ containing the origin, i.e. $$\mu_G(x):=\inf\left\{s>0:\frac{x}{s}\in G\right\},\ x\in\CC^n.$$ Assume that $G$ is bounded strongly convex with real analytic boundary. We shall show that
\begin{itemize}
\item $\mu_G-1$ is a real analytic outside $0$, defining function of $G$;
\item $\mu^2_G-1$ is a real analytic outside $0$, strongly convex outside $0$, defining function of $G$.
\end{itemize}
Clearly, $G=\{x\in\RR^{2n}:\mu_G(x)<1\}$. Setting $$q(x,s):=r\left(\frac{x}{s}\right),\ (x,s)\in U_0\times U_1,$$ where $r$ is a real analytic defining function of $G$ (defined near $\pa G$) and $U_0\su\RR^{2n}$, $U_1\su\RR$ are neighborhoods of $\pa G$ and $1$ respectively, we have $$\frac{\partial q}{\partial s}(x,s)=-\frac{1}{s^2}\left\langle\nabla r\left(\frac{x}{s}\right),x\right\rangle_{\RR}\neq 0$$ for $(x,s)$ such that $x\in\partial G$ and $s=\mu_G(x)=1$ (since $0\in G$, the vector $-x$ hooked at the point $x$ is inward $G$, so it is not orthogonal to the normal vector at $x$). By the Implicit Function Theorem for the equation $q=0$, the function $\mu_G$ is real analytic in a neighborhood $V_0$ of $\partial G$. To see that $\mu_G$ is real analytic outside $0$, fix $x_0\in(\RR^{2n})_*$. Then the set $$W_0:=\left\{x\in\RR^{2n}:\frac{x}{\mu_G(x_0)}\in V_0\right\}$$ is open and contains $x_0$. Since $$\mu_G(x)=\mu_G(x_0)\mu_G\left(\frac{x}{\mu_G(x_0)}\right),\ x\in W_0,$$ the function $\mu_G$ is real analytic in $W_0$. Therefore, we can take $d/ds$ on both sides of $\mu_G(sx)=s\mu_G(x),\ x\neq 0,\ s>0$ to obtain $$\langle\nabla\mu_G(x),x\rangle_{\RR}=\mu_G(x),\ x\neq 0,$$ so $\nabla\mu_G\neq 0$ in $(\RR^{2n})_*$.

Furthermore, $\nabla\mu^2_G=2\mu_G\nabla\mu_G$, so $\mu^2_G-1$ is also a defining function of $G$.
To show that $u:=\mu^2_G$ is strongly convex outside $0$ let us prove that $$X^T\mathcal{H}_aX>0,\quad a\in\pa G,\ X\in(\RR^{2n})_*,$$ where $\mathcal{H}_x:=\mathcal{H}u(x)$ for $x\in(\RR^{2n})_*$. Taking $\pa/\pa x_j$ on both sides of $$u(sx)=s^2u(x),\ x,s\neq 0,$$ we get \begin{equation}\label{62}\frac{\pa u}{\pa x_j}(sx)=s\frac{\pa u}{\pa x_j}(x)\end{equation} and further taking $d/ds$ $$\sum_{k=1}^{2n}\frac{\pa^2 u}{\pa x_j\pa x_k}(sx)x_k=\frac{\pa u}{\pa x_j}(x).$$ In particular, $$x^T\mathcal{H}_xy=\sum_{j,k=1}^{2n}\frac{\pa^2 u}{\pa x_k\pa x_j}(x)x_ky_j=\langle\nabla u(x),y\rangle_{\RR},\ x\in(\RR^{2n})_*,\ y\in\RR^{2n}.$$ Let $a\in\pa G$. Since $\langle\nabla\mu_G(a),a\rangle_{\RR}=\mu_G(a)=1$, we have $a\notin T^\RR_G(a)$. Any $X\in(\RR^{2n})_*$ can be represented as $\alpha a+\beta Y$, where $Y\in T^\RR_G(a)$, $\alpha,\beta\in\RR$, $(\alpha,\beta)\neq(0,0)$. Then \begin{eqnarray*}X^T\mathcal{H}_aX&=&\alpha^2a^T\mathcal{H}_aa+2\alpha\beta a^T\mathcal{H}_aY+\beta^2Y^T\mathcal{H}_aY=\\&=&\alpha^2\langle\nabla u(a),a\rangle_{\RR} +2\alpha\beta\langle\nabla u(a),Y\rangle_{\RR} +\beta^2Y^T\mathcal{H}_aY= \\&=&\alpha^22\mu_G(a)\langle\nabla\mu_G(a),a\rangle_{\RR} +\beta^2Y^T\mathcal{H}_aY=
2\alpha^2+\beta^2Y^T\mathcal{H}_aY.\end{eqnarray*} Since $G$ is strongly convex, the Hessian of any defining function is strictly positive on the tangent space, i.e. $Y^T\mathcal{H}_aY>0$ if $Y\in(T^\RR_G(a))_*$. Hence $X^T\mathcal{H}_aX\geq 0$. Note that it cannot be $X^T\mathcal{H}_aX=0$, since then $\alpha=0$, consequently $\beta\neq 0$ and $Y^T\mathcal{H}_aY=0$. On the other side $Y=X/\beta\neq 0$ --- a contradiction.

Taking $\pa/\pa x_k$ on both sides of \eqref{62} we obtain $$\frac{\pa^2 u}{\pa x_j\pa x_k}(sx)=\frac{\pa^2 u}{\pa x_j\pa x_k}(x),\ x,s\neq 0$$ and for $a,X\in(\RR^{2n})_*$ $$X^T\mathcal{H}_aX=X^T\mathcal{H}_{a/\mu_G(a)}X>0.$$

Let us consider the sets $$D_t:=\{x\in\CC^n:t\mu^2_D(x)+(1-t)\mu^2_{\BB_n}(x)<1\},\ t\in[0,1].$$ The functions $t\mu^2_D+(1-t)\mu^2_{\BB_n}$ are real analytic in $(\CC^n)_*$ and strongly convex in $(\CC^n)_*$, so $D_t$ are strongly convex domains with real analytic boundaries satisfying $$D=D_1\Subset D_{t_2}\Subset D_{t_1}\Subset D_0=\BB_n\text{\  if \ }0<t_1<t_2<1.$$ It is clear that $\mu_{D_t}=\sqrt{t\mu^2_D+(1-t)\mu^2_{\BB_n}}$. Further, if $t_{1}$ is close to $t_{2}$ then $D_{t_{1}}$ is close to $D_{t_{2}}$ w.r.t. the topology introduced in Section \ref{27}. We want to show that $D_t$ are in some family $\mathcal D(c)$. Only the interior and exterior ball conditions need to verify.

There exists $\delta>0$ such that $\delta\BB_n\Subset D$. Further, $\nabla\mu_{D_t}^2\neq 0$ in $(\RR^{2n})_*$. Set $$M:=\sup\left\{\frac{\mathcal{H}\mu_{D_t}^2(x;X)}{|\nabla\mu_{D_t}^2(y)|}:
t\in[0,1],\ x,y\in 2\ov{\BB}_n\setminus\delta\BB_n,\ X\in\RR^{2n},\ |X|=1\right\}.$$ It is a positive number since the functions $\mu_{D_t}^2$ are strongly convex in $(\RR^{2n})_*$ and the `sup' of the continuous, positive function is taken over a compact set. Let $$r:=\min\left\{\frac{1}{2M},\frac{\dist(\pa D,\delta\BB_n)}{2}\right\}.$$ For fixed $t\in[0,1]$ and $a\in\pa D_t$ put $a':=a-r\nu_{D_t}(a)$. In particular, $\ov{B_n(a',r)}\su 2\ov{\BB}_n\setminus\delta\BB_n$. Let us define $$h(x):=\mu^2_{D_t}(x)-\frac{|\nabla\mu^2_{D_t}(a)|}{2|a-a'|}(|x-a'|^2-r^2),\ x\in 2\ov{\BB}_n\setminus\delta\BB_n.$$ We have $h(a)=1$ and $$\nabla h(x)=\nabla\mu^2_{D_t}(x)-\frac{|\nabla\mu^2_{D_t}(a)|}{|a-a'|}(x-a').$$ For $x=a$, dividing the right side by $|\nabla\mu^2_{D_t}(a)|$, we get a difference of the same normal vectors $\nu_{D_t}(a)$, so $\nabla h(a)=0$. Moreover, for $|X|=1$ $$\mathcal{H}h(x;X)=\mathcal{H}\mu^2_{D_t}(x;X)-\frac{|\nabla\mu^2_{D_t}(a)|}{r}\leq M|\nabla\mu^2_{D_t}(a)|-2M|\nabla\mu^2_{D_t}(a)|<0.$$ It follows that $h\leq 1$ in any convex set $S$ such that $a\in S\su 2\ov{\BB}_n\setminus\delta\BB_n$. Indeed, assume the contrary. Then there is $y\in S$ such that $h(y)>1$. Let us join $a$ and $y$ with an interval $$g:[0,1]\ni t\longmapsto h(ta+(1-t)y)\in S.$$ Since $a$ is a strong local maximum of $h$, the function $g$ has a local minimum at some point $t_0\in(0,1)$. Hence $$0\leq g''(t_0)=\mathcal{H}h(t_0a+(1-t_0)y;a-y),$$ which is impossible.

Setting $S:=\ov{B_n(a',r)}$, we get $$\mu^2_{D_t}(x)\leq 1+\frac{|\nabla\mu^2_{D_t}(a)|}{2|a-a'|}(|x-a'|^2-r^2)<1$$  for $x\in B_n(a',r)$, i.e. $x\in D_t$.

The proof of the exterior ball condition is similar. Set $$m:=\inf\left\{\frac{\mathcal{H}\mu_{D_t}^2(x;X)}{|\nabla\mu_{D_t}^2(y)|}:
t\in[0,1],\ x,y\in(\ov{\BB}_n)_*,\ X\in\RR^{2n},\ |X|=1\right\}.$$ Note that the $m>0$. Actually, the homogeneity of $\mu_{D_t}$ implies $\mathcal{H}\mu_{D_t}^2(sx;X)=\mathcal{H}\mu_{D_t}^2(x;X)$ and $\nabla\mu_{D_t}^2(sx)=s\nabla\mu_{D_t}^2(x)$ for $x\neq 0$, $X\in \RR^{2n}$, $s>0$. Therefore, there are positive constants $C_1,C_2$ such that $C_1\leq\mathcal{H}\mu_{D_t}^2(x;X)$ for $x\neq 0$, $X\in \RR^{2n}$, $|X|=1$ and $|\nabla\mu_{D_t}^2(y)|\leq C_2$ for $y\in\ov\BB_n$. In particular, $m\geq C_1/C_2$.

Let $R:=2/m$. For fixed $t\in[0,1]$ and $a\in\pa D_t$ put $a'':=a-R\nu_{D_t}(a)$. Let us define $$\wi h(x):=\mu^2_{D_t}(x)-\frac{|\nabla\mu^2_{D_t}(a)|}{2|a-a''|}(|x-a''|^2-R^2),\ x\in\ov{\BB}_n.$$ We have $\wi h(a)=1$ and $$\nabla\wi  h(x)=\nabla\mu^2_{D_t}(x)-\frac{|\nabla\mu^2_{D_t}(a)|}{|a-a''|}(x-a''),$$ so $\nabla\wi h(a)=0$. Moreover, for $x\in(\ov{\BB}_n)_*$ and $|X|=1$ $$\mathcal{H}\wi h(x;X)=\mathcal{H}\mu^2_{D_t}(x;X)-\frac{|\nabla\mu^2_{D_t}(a)|}{R}\geq m|\nabla\mu^2_{D_t}(a)|-m/2|\nabla\mu^2_{D_t}(a)|>0.$$ Therefore, $a$ is a strong local minimum of $\wi h$.

Now using the properties listed above we may deduce that $\wi h\geq 1$ in $\ov\BB_n$. We proceed similarly as before: seeking a contradiction suppose that there is $y\in\ov\BB_n$ such that $\wi h(y)<1$. Moving $y$ a little (if necessary) we may assume that $0$ does not lie on the interval joining $a$ and $y$. Then the mapping $\wi g(t):=\wi h(ta+ (1-t)y)$ attains its local maximum at some point $t_0\in(0,1)$. The second derivative of $\wi g$ at $t_0$ is non-positive, which gives a contradiction with a positivity of the Hessian of the function $\wi h$. 

Hence, we get $$\frac{|\nabla\mu^2_{D_t}(a)|}{2|a-a''|}(|x-a''|^2-R^2)\leq\mu^2_{D_t}(x)-1<0,$$  for $x\in D_t$, so $D_t \subset B_n(a'',R)$.

Let $T$ be the set of all $t\in[0,1]$ such that there is an $E$-mapping $f_{t}:\DD\longrightarrow D_{t}$ with $f_{t}(0)=z$, $f_{t}(\xi_{t})=w$ for some $\xi_{t}\in(0,1)$ (resp. $f_{t}(0)=z$, $f_{t}'(0)=\lambda_{t}v$ for some $\lambda_{t}>0$). We claim that $T=[0,1]$. To prove it we will use the open-close argument.

Clearly, $T\neq\emptyset$, as $0\in T$. Moreover, $T$ is open in $[0,1]$. Indeed, let $t_{0}\in T$. It follows from Proposition \ref{13} that there is a neighborhood $T_{0}$ of $t_{0}$ such that there are $E$-mappings $f_{t}:\DD\longrightarrow D_{t}$ and $\xi_{t}\in(0,1)$ such that $f_{t}(0)=z$, $f_{t}(\xi_{t})=w$ for all $t\in T_{0}$ (resp. $\lambda_{t}>0$ such that $f_{t}(0)=z$, $f_{t}'(0)=\lambda_{t} v$ for all $t\in T_{0}$).

To prove that $T$ is closed, choose a sequence $\{t_{m}\}\su T$ convergent to some $t\in[0,1]$. We want to show that $t\in T$. Since $f_{t_m}$ are $E$-mappings, they are complex geodesics. Therefore, making use of the inclusions $D\subset D_{t_m}\subset\mathbb B_n$ we find that there is a compact set $K\su(0,1)$ (resp. a compact set $\widetilde K\subset(0,\infty)$) such that $\{\xi_{t_m}\}\subset K$ (resp. $\{\lambda_{t_m}\}\subset\widetilde K$). By Propositions \ref{8} and \ref{10b} the functions $f_{t_{m}}$ and $\widetilde f_{t_{m}}$ are equicontinuous in $\mathcal{C}^{1/2}(\overline{\DD})$ and by Propositions \ref{9} and \ref{10a} the functions $\rho_{t_{m}}$ are uniformly bounded from both sides by positive numbers and equicontinuous in $\mathcal{C}^{1/2}(\TT)$. From the Arzela-Ascoli Theorem there are a subsequence $\{s_{m}\}\subset\{t_{m}\}$ and mappings $f,\wi f\in\OO(\DD)\cap\mathcal C^{1/2}(\overline{\mathbb D})$, $\rho\in\cC^{1/2}(\TT)$ such that $f_{s_{m}}\to f$, $\widetilde{f}_{s_{m}}\to\wi f$ uniformly on $\overline{\DD}$, $\rho_{s_{m}}\to\rho$ uniformly on $\TT$ and $\xi_{s_m}\to\xi\in (0,1)$ (resp. $\lambda_{s_m}\to\lambda>0$).

Clearly, $f(\CDD)\su\overline{D}_{t}$, $f(\TT)\su\partial D_{t}$ and $\rho>0$. By the strong pseudoconvexity of $D_t$ we get $f(\DD)\su D_t$.

The conditions (3') and (4) of Definitions~\ref{21} and \ref{21e} follow from the uniform convergence of suitable functions. Therefore, $f$ is a weak $E$-mapping of $D_{t}$, consequently an $E$-mapping of $D_t$, satisfying $f(0)=z$, $f(\xi)=w$ (resp. $f(0)=z$, $f'(0)=\lambda v$).

Let us go back to the general situation that is when a domain $D$ is bounded strongly linearly convex with real analytic boundary. Take a of point $\eta\in\partial{D}$ such that $\max_{\zeta\in\partial{D}}|z-\zeta|=|z-\eta|$. Then $\eta$ is a point of the strong convexity of $D$. Indeed, by the Implicit Function Theorem one can assume that in a neighborhood of $\eta$ the defining functions of $D$ and $B:=B_n(z,|z-\eta|)$ are of the form $r(x):=\wi r(\wi x)-x_{2n}$ and $q(x):=\wi q(\wi x)-x_{2n}$ respectively, where $x=(\wi x,x_{2n})\in\RR^{2n}$ is sufficiently close to $\eta$. From the inclusion $D\su B$ it it follows that $r-q\geq 0$ near $\eta$ and $(r-q)(\eta)=0$. Thus the Hessian $\mathcal{H}(r-q)(\eta)$ is weakly positive in $\CC^n$. Since $\mathcal{H}q(\eta)$ is strictly positive on $T_B^\RR(\eta)_*=T_D^\RR(\eta)_*$, we find that $\mathcal{H}r(\eta)$ is strictly positive on $T_D^\RR(\eta)_*$, as well.

By a continuity argument, there is a convex neighborhood $V_0$ of $\eta$ such that all points from $\pa D\cap V_0$ are points of the strong convexity of $D$. It follows from Proposition \ref{localization} (after shrinking $V_0$ if necessary) that there is a weak stationary mapping $g:\DD\longrightarrow D\cap V_0$ such that $g(\TT)\subset\partial D$. In particular, $g$ is a weak stationary mapping of $D$. Since $D\cap V_0$ is convex, the condition with the winding number is satisfied on $D\cap V_0$ (and then on the whole $D$). Consequently $g$ is an $E$-mapping of $D$.

If $z=g(0)$, $w=g(\xi)$ for some $\xi\in\DD$ (resp. $z=g(0)$, $v=g'(0)$) then there is nothing to prove. In the other case let us take curves $\alpha:[0,1]\longrightarrow D$, $\beta:[0,1]\longrightarrow D$ joining $g(0)$ and $z$, $g(\xi)$ and $w$ (resp. $g(0)$ and $z$, $g'(0)$ and $v$). We may assume that the images of $\alpha$ and $\beta$ are disjoint. Let $T$ be the set of all $t\in[0,1]$ such that there is an $E$-mapping $g_{t}:\DD\longrightarrow D$ such that $g_{t}(0)=\alpha(t)$, $g_{t}(\xi_{t})=\beta(t)$ for some $\xi_{t}\in(0,1)$ (resp. $g_{t}(0)=\alpha(t)$, $g_{t}'(0)=\lambda_{t}\beta(t)$ for some $\lambda_{t}>0$). Again $T\neq\emptyset$ since $0\in T$. Using the results of Section \ref{22} similarly as before (but for one domain), we see that $T$ is closed.

Since $\wi k_D$ is symmetric, it follows from Proposition \ref{13}(1) that the set $T$ is open in $[0,1]$ (first we move along $\alpha$, then by the symmetry we move along $\beta$). Therefore, $g_1$ is the $E$-mapping for $z,w$.

In the case of $\kappa_{D}$ we change a point and then we change a direction. To be more precise, consider the set $S$ of all $s\in[0,1]$ such that there is an $E$-mapping $h_{s}:\DD\longrightarrow D$ such that $h_{s}(0)=\alpha(s)$. Then $0\in S$, by Proposition \ref{13}(1) the set $S$ is open in $[0,1]$ and by results of Section~\ref{22} again, it is closed. Hence $S=[0,1]$. Now we may join $h'_{1}(0)$ and $v$ with a curve $\gamma:[0,1]\longrightarrow \mathbb C^n$. Let us define $R$ as the set of all $r\in[0,1]$ such that there is an $E$-mapping $\wi h_{r}:\DD\longrightarrow D$ such that $\wi h_{r}(0)=h_1(0)$, $\wi h'_{r}(0)=\sigma_{r}\gamma(1-r)$ for some $\sigma_r>0$. Then $1\in R$, by Proposition \ref{13}(2) the set $R$ is open in $[0,1]$ and, by Section \ref{22}, it is closed. Hence $R=[0,1]$, so $\wi h_{0}$ is the $E$-mapping for $z,v$.
\end{proof}

Now we are in position that allows us to prove the main results of the Lempert's paper.

\begin{proof}[Proof of Theorem \ref{lem-car} $($real analytic case$)$] It follows from Lemma \ref{lemat} that for any different points $z,w\in D$ (resp. $z\in D$, $v\in(\CC^n)_*$) one may find an $E$-mapping passing through them (resp. $f(0)=z$, $f'(0)=v$). On the other hand, it follows from Proposition \ref{1} that $E$-mappings have left inverses, so they are complex geodesics.
\end{proof}

\begin{proof}[Proof of Theorem \ref{main} $($real analytic case$)$] This is a direct consequence of Lemma \ref{lemat} and Corollary \ref{28}.
\end{proof}

\bigskip

\begin{center}{\sc $\cC^2$-smooth case}\end{center}
\bigskip

\begin{lem}\label{un} Let $D\su\mathbb C^n$, $n\geq 2$, be a bounded strongly pseudoconvex domain with $\mathcal C^2$-smooth boundary. Take $z\in D$ and let $r$ be a defining function of $D$ such that 
\begin{itemize}\item $r\in \mathcal C^2(\mathbb C^n);$
\item $D=\{x\in \mathbb C^n:r(x)<0\}$;
\item $\mathbb C^n\setminus D=\{x\in \mathbb C^n:r(x)>0\}$;
\item $|\nabla r|=1$ on $\partial D;$
\item $\sum_{j,k=1}^n\frac{\partial^2 r}{\partial z_j\partial\overline z_k}(a)X_{j}\overline{X}_{k}\geq C|X|^2$ for any $a\in \partial D$ and $X\in \mathbb C^n$ with some constant $C>0$.
\end{itemize}

Suppose that there is a sequence $\{r_m\}$ of $\mathcal C^2$-smooth real-valued functions such that $D^{\alpha}r_n$ converges to $D^{\alpha}r$ locally uniformly for any $\alpha\in \mathbb N_0^{2n}$ such that  $|\alpha|:=|\alpha_1| +\ldots+|\alpha_n|\leq 2$. Let $D_m$ be a connected component of the set $\{x\in\mathbb C^n:r_m(x)<0\}$, containing the point $z$.

Then there is $c>0$ such that $(D_m,z)$ and $(D,z)$ belong to $\mathcal D(c)$, $m>>1.$
\end{lem}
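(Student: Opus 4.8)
The goal is to produce a single constant $c>0$ so that all seven conditions in Definition~\ref{30} hold simultaneously for $(D,z)$ and for $(D_m,z)$ once $m$ is large. The natural strategy is to establish each condition for $(D,z)$ with some slack, and then argue by the $\mathcal C^2$-convergence $D^\alpha r_m\to D^\alpha r$ that the same numerical bound (possibly doubled) survives for $(D_m,z)$ for $m\gg 1$. Throughout, one exploits that the convergence is locally uniform up to second order, and that $z$ is a fixed interior point; the only genuinely global object is $D$ itself, but boundedness of $D$ plus the convergence confines $\partial D_m$ to a fixed compact neighborhood of $\partial D$, so ``locally uniform'' is effectively ``uniform'' on the relevant sets.

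First I would record the basic geometric facts for $(D,z)$. Since $r$ is $\mathcal C^2$ with $|\nabla r|=1$ on $\partial D$ and the Hessian estimate $\sum r_{z_j\bar z_k}(a)X_j\bar X_k\ge C|X|^2$ holds on $\partial D$, the domain $D$ is bounded and strongly pseudoconvex; hence it satisfies an interior ball condition with some radius $\rho_0>0$, has finite diameter $d_0$, $\dist(z,\partial D)=:\delta_0>0$, the unit normal $\nu_D$ is Lipschitz (its components are built from first derivatives of $r$, which are $\mathcal C^1$ hence Lipschitz on the compact $\partial D$), and, by a standard argument (the function $-\log(\varepsilon-r)$ is plurisubharmonic on the $\varepsilon$-hull, as used in the proof of Proposition~\ref{9}), every $\varepsilon$-hull $D_\varepsilon$ with $\varepsilon$ small is strongly pseudoconvex. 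Choosing $c_0$ large enough to dominate $1/\delta_0$, $d_0$, $1/\rho_0$, twice the Lipschitz constant of $\nu_D$, and to make $1/c_0$ smaller than the strong-pseudoconvexity threshold for the hulls, all of conditions (1),(2),(5),(6) hold for $(D,z)\in\mathcal D(2c_0)$; condition (3) then follows from (2) by partitioning a segment between interior balls into $\le 8(2c_0)^2$ steps, exactly as in the Remark after Definition~\ref{30} (note: that Remark is stated for convex domains, but the segment-subdivision argument there uses only the interior ball condition and the diameter bound, so it applies verbatim here). Condition (4) is supplied by Remark~\ref{D(c),4}, which constructs the Forn\ae ss maps $\Phi$ for strongly pseudoconvex domains and gives a constant $c_1=c_1(\delta_0)$ with $(D,z)\in\mathcal D(c_1)$.

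Now I would transfer these to $D_m$. Because $D^\alpha r_m\to D^\alpha r$ uniformly on a neighborhood of $\partial D$ for $|\alpha|\le 2$, for large $m$ the set $\{r_m<0\}$ has its boundary component near $\partial D$ within any prescribed tubular neighborhood, $|\nabla r_m|$ is bounded away from $0$ there (so after normalizing one may assume $|\nabla r_m|=1$ on $\partial D_m$ with the normal $\nu_{D_m}$ converging uniformly to $\nu_D$ and with a Lipschitz constant converging to that of $\nu_D$), the complex Hessian of $r_m$ on $\partial D_m$ stays $\ge C/2$, and $\dist(z,\partial D_m)\ge\delta_0/2$, $\diam D_m\le 2d_0$. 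The interior ball radius is controlled from below by a uniform positive number because the second-order convergence forces the normal curvatures of $\partial D_m$ to be uniformly bounded (curvatures are continuous functions of first and second derivatives of the defining function, and these converge uniformly on the compact tube containing $\partial D_m$). This gives (1),(2),(5),(6) for $(D_m,z)$ with the constant $2c_0$. Then (3) again follows from (2). For (4) I would invoke Remark~\ref{D(c),4} once more: each $D_m$ is strongly pseudoconvex (by the Hessian bound $\ge C/2$) with $\dist(z,\partial D_m)>\delta_0/2$, so there is $c_2=c_2(\delta_0/2)$, uniform in $m$, with $(D_m,z)\in\mathcal D(c_2)$; crucially the constant in Remark~\ref{D(c),4} depends only on $\delta_0/2$ and not on the individual domain. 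Taking $c:=\max\{2c_0,\,c_1,\,c_2\}$ finishes the proof.

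The main obstacle, and the point that needs the most care, is the \emph{uniformity} of condition (4) over the sequence: the Forn\ae ss peak-function construction in Remark~\ref{D(c),4} a priori produces data ($\Phi_j$, the strongly convex model domains $C_j$, the covering balls $B_j$, and the resulting constant $c$) that could degenerate as the domain varies. One must check that the relevant quantities — the number and size of the covering balls, the operator norms of $\Phi_j'$ and $(\Phi_j^{-1})'$, and the exterior-ball radius of the $C_j$ — can be chosen uniformly for all $D_m$ with $m\gg 1$. This is where the $\mathcal C^2$-convergence is essential: it makes the family $\{D_m\}$ ``$\mathcal C^2$-precompact'' near the boundary, so the peak-function data vary continuously and the extracted constant does not blow up. Everything else is a routine perturbation argument tracking how each defining quantity depends continuously on $(r,\nabla r,\mathcal H r)$ on a fixed compact set.
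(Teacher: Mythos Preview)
Your overall architecture is right, but there are two genuine gaps, both concerning uniformity in $m$.

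\textbf{Condition (3).} Your claim that ``the segment-subdivision argument there uses only the interior ball condition and the diameter bound, so it applies verbatim here'' is false. In the Remark you cite, the balls of radius $1/(2c)$ centered at the subdivision points lie in $D$ \emph{because $D$ is convex}: if $B_n(x,1/(2c))\subset D$ and $B_n(y,1/(2c))\subset D$, then for convex $D$ one has $B_n(tx+(1-t)y,1/(2c))\subset D$. For a merely strongly pseudoconvex $D_m$ the straight segment joining two interior-ball centers need not stay inside $D_m$, let alone keep a $1/(2c)$-tube around it. The paper avoids this by a different device: it fixes one $m_0$, uses Remark~\ref{D(c),4} to get (3) for the single domain $D_{m_0}$ with some $c''$, and then for $m\ge m_0$ routes any chain in $D_m$ through $D_{m_0}$ (from $x\in D_m$ go via the interior ball to a center lying in $D_{m_0}$, run the $D_{m_0}$-chain, then back out to $y$). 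That produces a chain whose parameters depend only on $c'$ and $c''$, hence uniformly in $m$.

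\textbf{Condition (4).} You write ``crucially the constant in Remark~\ref{D(c),4} depends only on $\delta_0/2$ and not on the individual domain'', but this is not what that remark establishes: the Forn\ae ss data $\Phi_j$, $B_j$, $C_j$ are constructed from the specific domain, and the resulting $c$ depends on them. You recognize this in your final paragraph, but your resolution (``$\mathcal C^2$-precompactness'' so ``the peak-function data vary continuously'') is not an argument---it would require proving stability of the whole Forn\ae ss construction under $\mathcal C^2$-perturbation, which is at least as hard as the lemma itself. The paper's route is more direct and is really the missing idea: one \emph{fixes} the finitely many Forn\ae ss maps $\Phi_j$ and balls $B_j\supset B_j'$ obtained for $D$, and checks that the \emph{same} $\Phi_j$ serve for $D_m$ when $m$ is large. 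Conditions (4)(b)--(d) are then automatic. For (4)(a) one uses that $D^\alpha(r_m\circ\Phi_j^{-1})\to D^\alpha(r\circ\Phi_j^{-1})$ uniformly on $\Phi_j(B_j)$ for $|\alpha|\le 2$, so the same Hessian-comparison trick that gives the interior ball condition also yields a uniform exterior-ball radius for $\Phi_j(\partial D_m\cap B_j')$ inside the fixed convex model $C_j$. This is the place where the $\mathcal C^2$-convergence is actually used, and it replaces your appeal to an undefined continuity of the Forn\ae ss construction.
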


\begin{proof} Losing no generality assume that $D\Subset\mathbb B_n.$
Note that the conditions (1), (5), (6) of Definition \ref{30} are clearly satisfied. To find $c$ satisfying ($2$), we take $s>0$ such that $\mathcal H r (x;X)< s |X|^2$ for $x\in\ov\BB_n$ and $X\in(\mathbb R^{2n})_*$. Then $\HH r_m (x;X)<2s|X|^2$ for $x\in\ov\BB_n$, $X\in(\mathbb R^{2n})_*$ and $m>>1$. Let $U_0\subset\mathbb B_n$ be an open neighborhood of $\pa D$ such that $|\nabla r|$ is on $U_0$ between $3/4$ and $5/4$. Note that $\partial D_m\subset U_0$ and $|\nabla r_m|\in (1/2, 3/2)$ on $U_0$ for $m>>1$.

Fix $m$ and $a\in \partial D_m$ and put $b:=a-R\nu_{D_m}(a)$, where a small number $R>0$ will be specified later. There is $t>0$ such that $\nabla r_m(a)=2t(a-b)$. Note that $t$ may be arbitrarily large provided that $R$ was small enough. We take $t:=2s$ and $R:=|\nabla r_m(a)|/t$. Then we have $\mathcal H r_m(x;X)<2t |X|^2$ for $x\in\ov\BB_n$, $X\in(\mathbb R^{2n})_*$ and $m>>1$. Then a function $$h(x):=r_m(x)-t(|x-b|^2-R^2),\ x\in \mathbb C^n,$$ attains at $a$ its global maximum on $\ov\BB_n$ ($a$ is a strong local maximum and the Hessian of $h$ is negative on the convex set $\ov\BB_n$, cf. the proof of Lemma \ref{lemat}).
Thus $h\leq 0$ on $\mathbb B_n$. From this we immediately get (2).

Note that it follows from (2) that $D_m=\{x\in\mathbb C^n:r_m(x)<0\}$ for $m$ big enough (i.e. $\{x\in \mathbb C^n:\ r_m(x)<0\}$ is connected).

Moreover, the condition (2) implies the condition (3) as follows. We infer from Remark~\ref{D(c),4} that there is $c'>0$ such that $D$ satisfies (3) with $c'$. Let $m_0$ be such that the Hausdorff distance between $\partial D$ and $\partial D_m$ is smaller than $1/c'$ for $m\geq m_0$. There is $c''$ such that $D_{m_0}$ satisfies (3) with $c''$. Losing no generality we may assume that $c''<c'$. Take any $x,y\in D_m$. Since $D_m$ satisfies the interior ball condition with a radius $c$ we infer that there are balls of a radius $1/c$ contained in $D_m$ and containig $x$ and $y$ respectively. The centers of these balls lie in $D_{m_0}$. Using the fact that $(D_{m_0},z)$ lies in $\mathcal D(c'')$, we may join chosen centers with balls of a radius $1/(2c'')$ as in the condition (3), so we have found a chain consiting of balls of radii $c'$ and $c''$ joining $x$ and $y$.

Thus we may join $x$ and $y$ with balls contained entirely in the constructed chain whose radii depend only on $c'$ and $c''$.

Now we are proving $(4)$.  We shall show that there is $c>c'$ such that every $D_m$ satisfies (4) with $c$ for $m$ big enough. To do it let us cover $\partial D$ with a finite number of balls $B_j$, $j=1,\ldots,N$, from condition (4) and let $B'_j$ be a ball contained relatively in $B_j$ such that $\{B_j\}$ covers $\partial D$, as well. Let $\Phi_j$ be mappings corresponding to $B_j$. Let $\eps$ be such that any ball of radius $\eps$ intersecting $\partial D$ non-emptily is relatively contained in $B_j'$ for some $j$. Observe that any ball $B$ of radius $\eps/2$ intersecting non-emptily $\partial D_m$ is contained in a ball of radius $\eps$ intersecting non-emptily $\partial D$; hence it is contained in $B_j'$ for some $j$. Then the pair $B$, $\Phi_j$ satisfies the conditions (4) (b), (c) and (d). Therefore, it suffices to check that there is $c>2/\eps$ such that each pair $B_j'$, $\Phi_j$ satisfies the condition (4) for $D_m$ with $c$ ($m>>1$). This is possible since $\Phi_j(D_m)\subset\Phi_j(D)$, $D^\alpha\Phi_j(\pa D_m\cap B_j)$ converges to $D^\alpha\Phi_j(\pa D\cap B_j)$ for $|\alpha|\leq 2$ and for any $w\in\Phi(\pa D\cap B_j)$ there is a ball of radius $2/\eps$ containing $\Phi_j(D)$ and tangent to $\partial\Phi_j(D)$ at $w$. To be precise, we proceed as follows. 

Let $a,b\in\CC^n$ and let $x\in\pa B_n(a,\wi c)$, where $\wi c>c'$. Then a ball $B_n(2a-x,2\wi c)$ contains $B_n(a,\wi c)$ and is tangent to $B_n(a,\wi c)$ at $x$. There is a number $\eta=\eta(\delta,\wi c)>0$, independent of $a,b,x$, such that the diameter of the set $B_n(b,\wi c)\setminus B_n(2a-x,2\wi c)$ is smaller than $\delta>0$, whenever $|a-b|<\eta$ (this is a simple consequence of the triangle inequality).

Let $\wi s>0$ be such that $\mathcal H(r\circ\Phi_j^{-1})(x;X)\geq 2\wi s|X|^2$ for $x\in U_j$, $j=1,\ldots,N$, where $U_j$ is an open neighborhood of $\Phi_j(\partial D\cap B_j)$. Then, for $m$ big enough, $\mathcal H(r_m\circ \Phi_j^{-1})(x;X)\geq\wi s|X|^2$ for $x\in U_j$ and $\Phi_j(\partial D_m\cap B_j')\subset U_j$, $j=1,\ldots,N$. Repeating for the function $$x\longmapsto(r_m\circ\Phi_j^{-1})(x)-\wi t(|x-\wi b|^2-\wi R^2)$$ the argument used in the interior ball condition with suitable chosen $\wi t$ and uniform $\wi R>c$, we find that there is uniform $\wi\eps>0$ such that for any $j,m$ and $w\in\Phi_j(\partial D_m\cap B_j')$ there is a ball $B$ of radius $\wi R$, tangent to $\Phi_j(\partial D_m\cap B_j')$ at $w$, such that $\Phi_j(\partial D_m\cap B_j')\cap B_n(w,\wi\eps)\subset B$. Let $a_{j,m}(w)$ denote its center.

On the other hand for any $w\in \Phi_j(\partial D_m\cap B_j')$ there is $t>0$ such that $w'=w+t\nu (w)\in \Phi_j(\partial D\cap B_j)$, where $\nu(w)$ is a normal vector to $\Phi_j(\partial D_m\cap B_j')$ at $w$. Let $a_j(w')$ be a center of a ball of radius $\wi R$ tangent to $\Phi_j(\partial D\cap B_j)$ at $w'$. It follows that $|a_{j,m}(w)-a_j(w')|<\eta(\wi\eps/2,\wi R)$ provided that $m$ is big enough. 

Joinining the facts presented above, we finish the proof of the exterior ball condition (with a radius dependent only on $\wi\eps$ and $\wi R$).
\end{proof}

\begin{proof}[Proof of Theorems \ref{lem-car} and \ref{main} \emph{(}$\mathcal C^2$-smooth case$)$]
Losing no generality assume that $0\in D\Subset\BB_n$.

It follows from the Weierstrass Theorem that there is sequence $\{P_k\}$ of real polynomials on $\CC^n\simeq\mathbb R^{2n}$ such that $$D^{\alpha}P_{k}\to D^{\alpha}r \text{ uniformly on }\ov\BB_n,$$ where $\alpha=(\alpha_1,\ldots, \alpha_{2n})\in \mathbb N_0^{2n}$ is such that $|\alpha|=\alpha_1+\ldots +\alpha_{2n}\leq 2$. Consider the open set $$\wi D_{k,\eps}:=\{x\in \mathbb C^n:P_{k}(x)+\eps<0\}.$$ Let $\eps_{m}$ be a sequence of positive numbers converging to $0$ such that $3\eps_{m+1}<\eps_m.$

For any $m\in \mathbb N$ there is $k_{m}\in\NN$ such that $\sup_{\ov\BB_n}|P_{k_{m}}-r|<\eps_{m}$. Putting $r_{m}:=P_{k_{m}}+2\eps_{m}$, we get $r+\eps_{m}<r_{m}<r+3\eps_{m}$. In particular, $r_{m+1}<r_m.$

Let $D_m$ be a connected component of $D_{k_m,2\eps_m}$ containing $0$. It is a bounded strongly linearly convex domain with real analytic boundary and $r_m$ is its defining function provided that $m$ is big enough. Moreover, $D_{m}\subset D_{m+1}$ and $\bigcup_m D_{m}=D$. Using properties of holomorphically invariant functions and metrics we get Theorem~\ref{lem-car}.

We are left with showing the claim that for any different $z,w\in D$ (resp. $z\in D$, $v\in(\mathbb C^n)_*$) there is a weak $E$-mapping for $z,w$ (resp. for $z,v$). Fix $z\in D$ and $w\in D$ (resp. $v\in(\mathbb C^n)_*$). Then $z,w\in D_m$ (resp. $z\in D_m$), $m>>1$. Therefore, for any $m>>1$ one may find an $E$-mapping $f_m$ of $D_m$ for $z,w$ (resp. for $z,v$). Since $(D_m,z)\in \mathcal D(c)$ for some uniform $c>0$ ($m>>1$) (Lemma~\ref{un}), we find that $f_m$, $\wi f_m$ and $\rho_m$ satisfy the uniform estimates from Section~\ref{22}. Thus, passing to a subsequence we may assume that $\{f_m\}$ converges uniformly on $\CDD$ to a mapping $f\in\OO(\DD)\cap\cC^{1/2}(\CDD)$ passing through $z,w$ (resp. such that $f(0)=z$, $f'(0)=\lambda v$, $\lambda>0$), $\{\wi f_m\}$ converges uniformly on $\CDD$ to a mapping $\wi f\in\OO(\DD)\cap\mathcal C^{1/2}(\overline{\mathbb D})$ and $\{\rho_m\}$ is convergent uniformly on $\TT$ to a positive function $\rho\in\cC^{1/2}(\TT)$ (in particular, $f'\bullet\wi f=1$ on $\DD$, so $\wi f$ has no zeroes in $\CDD$). We already know that this implies that $f$ is a weak $E$-mapping of $D$.

To get $\cC^{k-1-\eps}$-smoothness of the extremal $f$ and its associated mappings for $k\geq 3$, it suffices to repeat the proof of Proposition~5 of \cite{Lem2}. This is just the Webster Lemma (we have proved it in the real analytic case --- see Proposition~\ref{6}). Namely, let $$\psi:\partial D\ni z\longmapsto(z,T_{D}^\mathbb{C}(z))\in \mathbb C^n\times(\mathbb P^{n-1})_*,$$ where $\mathbb P^{n-1}$ is the $(n-1)$-dimensional complex projective space. Let $\pi:(\CC^n)_*\longrightarrow\mathbb P^{n-1}$ be the canonical projection. 

By \cite{Web}, $\psi(\partial D)$ is a totally real manifold of $\mathcal C^{k-1}$ class. Observe that the mapping $(f,\pi\circ \wi f):\CDD\longrightarrow\CC^n\times\mathbb P^{n-1}$ is $1/2$-H\"older continuous, is holomorphic on $\mathbb D$ and maps $\mathbb T$ into $\psi(\partial D)$. Therefore, it is $\mathcal C^{k-1-\eps}$-smooth for any $\eps>0$, whence $f$ is $\mathcal C^{k-1-\eps}$-smooth. Since $\nu_D\circ f$ is of class $\mathcal C^{k-1-\eps}$, it suffices to proceed as in the proof of Proposition~\ref{6}.
\end{proof}

\section{Appendix}\label{Appendix}
\subsection{Totally real submanifolds}
Let $M\subset\CC^m$ be a totally real local $\CLW$ submanifold of the real dimension $m$. Fix a point $z\in M$. There are neighborhoods $U_0\su\RR^m$, $V_0\su\CC^m$ of $0$ and $z$ and a $\CLW$ diffeomorphism $\widetilde{\Phi}:U_0\longrightarrow M\cap V_0$ such that $\widetilde{\Phi}(0)=z$. The mapping $\widetilde{\Phi}$ can be extended in a natural way to a mapping $\Phi$ holomorphic in a neighborhood of $0$ in $\CC^m$. Note that this extension will be biholomorphic in a neighborhood of $0$. Actually, we have $$\frac{\partial\Phi_j}{\partial z_k}(0)=\frac{\partial\Phi_j}{\partial
x_k}(0)=\frac{\partial\widetilde{\Phi}_j}{\partial x_k}(0),\ j,k=1,\ldots,m,$$ where $x_k=\re z_k$. Suppose that the complex derivative $\Phi'(0)$ is not an isomorphism. Then there is $X\in(\CC^m)_*$ such that $\Phi'(0)X=0$, so \begin{multline*}0=\sum_{k=1}^m\frac{\partial\Phi}{\partial z_k}(0)X_k=\sum_{k=1}^m\frac{\partial\wi\Phi}{\partial x_k}(0)(\re X_k+i\im X_k)=\\=\underbrace{\sum_{k=1}^m\frac{\partial\wi\Phi}{\partial x_k}(0)\re X_k}_{=:A}+i\underbrace{\sum_{k=1}^m\frac{\partial\wi\Phi}{\partial x_k}(0)\im X_k}_{=:B}.\end{multline*}
The vectors $$\frac{\partial\wi\Phi}{\partial x_k}(0),\ k=1,\ldots,m$$ form a basis of $T^{\RR}_M(z)$, so $A,B\in T^{\RR}_M(z)$, consequently $A,B\in iT^{\RR}_M(z)$. Since $M$ is totally real, i.e. $T^{\RR}_M(z)\cap iT^{\RR}_M(z)=\{0\}$, we have $A=B=0$. By a property of the basis we get $\re X_k=\im X_k=0$, $k=1,\ldots,m$ --- a contradiction.

Therefore, $\Phi$ in a neighborhood of $0$ is a biholomorphism of two open subsets of $\CC^m$, which maps a neighborhood of $0$ in $\RR^m$ to a neighborhood of $z$ in $M$.

\begin{lemm}[Reflection Principle]\label{reflection}
Let $M\subset\CC^m$ be a totally real local $\CLW$ submanifold of the real
dimension $m$. Let $V_0\subset\CC$ be a neighborhood of $\zeta_0\in\TT$ and let $g:\overline{\DD}\cap V_0\longrightarrow\CC^m$ be a continuous mapping. Suppose that $g\in\OO(\DD\cap V_0)$ and $g(\TT\cap V_0)\subset M$. Then $g$ can be extended holomorphically past $\TT\cap V_0$.
\end{lemm}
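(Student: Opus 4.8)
The plan is to reduce the statement to the classical Schwarz reflection principle across a line segment by means of a local straightening of the totally real submanifold $M$. By the discussion preceding the lemma, for the point $z_0:=g(\zeta_0)\in M$ there are neighborhoods $U\subset\CC^m$ of $0$ and $W\subset\CC^m$ of $z_0$ together with a biholomorphism $\Phi\colon U\longrightarrow W$ such that $\Phi(\RR^m\cap U)=M\cap W$ (and $\Phi(0)=z_0$). Shrinking $V_0$ if necessary, we may assume $g(\overline{\DD}\cap V_0)\subset W$, so that the mapping $h:=\Phi^{-1}\circ g\colon\overline{\DD}\cap V_0\longrightarrow U$ is well defined, continuous on $\overline{\DD}\cap V_0$, holomorphic on $\DD\cap V_0$, and satisfies $h(\TT\cap V_0)\subset\RR^m$. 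It suffices to extend $h$ holomorphically past $\TT\cap V_0$; composing the extension with $\Phi$ then yields the desired holomorphic extension of $g$.

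Next I would pass from the circle to a straight line. Fix a conformal map $\omega$ (a M\"obius transformation) sending a neighborhood of $\zeta_0$ in $\overline{\DD}$ to a neighborhood of a real point in the closed upper half-plane $\overline{\HH^+}$, carrying $\TT\cap V_0$ into $\RR$. Then $H:=h\circ\omega^{-1}$ is continuous on a half-disc $\{|\lambda-\lambda_0|<\varepsilon,\ \IM\lambda\ge 0\}$, holomorphic on its interior, and real-valued on the real segment $\{|\lambda-\lambda_0|<\varepsilon,\ \IM\lambda=0\}$, componentwise. Applying the classical Schwarz reflection principle to each of the $m$ scalar components $H_1,\dots,H_m$ separately — defining $H_j(\overline\lambda):=\overline{H_j(\lambda)}$ for $\IM\lambda<0$ — produces a holomorphic extension of $H$ to a full disc about $\lambda_0$. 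Transporting back via $\omega$ gives a holomorphic extension of $h$ to a neighborhood of $\zeta_0$ in $\CC$, and finally $g=\Phi\circ h$ extends holomorphically past $\zeta_0$. Since $\zeta_0\in\TT\cap V_0$ was arbitrary, $g$ extends holomorphically past all of $\TT\cap V_0$.

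I do not expect a genuine obstacle here; the content is entirely in the straightening step, which has already been supplied in the paragraph preceding the lemma (and rests on the totally real hypothesis, which guarantees that the real extension $\widetilde\Phi$ has invertible complex derivative at $0$). The only points requiring a little care are the shrinking of $V_0$ so that $g$ maps into the chart $W$, and the observation that the classical one-variable Schwarz reflection applies coordinatewise because $M\cap W$ has been straightened to $\RR^m\cap U$, so that ``$h$ real on the boundary arc'' means each component $h_j$ is real there. No global information about $M$ is used — only its local structure near the single point $z_0=g(\zeta_0)$ — which is exactly what makes the argument work for an arbitrary totally real $\CLW$ submanifold.
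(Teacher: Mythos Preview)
Your proposal is correct and follows essentially the same approach as the paper's proof: straighten $M$ locally to $\RR^m$ via the biholomorphism $\Phi$, shrink the neighborhood so that $g$ lands in the chart, and apply the classical one-variable Schwarz reflection principle componentwise to $\Phi^{-1}\circ g$. The only cosmetic difference is that you explicitly pass to the upper half-plane via a M\"obius map before reflecting, whereas the paper directly invokes ``the ordinary Reflection Principle'' for an arc of $\TT$.
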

\begin{proof}
In virtue of the identity principle it is sufficient to extend $g$ locally
past an arbitrary point $\zeta_0\in\TT\cap V_0$. For a point $g(\zeta_0)\in M$ take $\Phi$ as above. Let $V_1\subset V_0$ be a neighborhood of $\zeta_0$ such that $g(\CDD\cap V_1)$ is contained in the image
of $\Phi$. The mapping $\Phi^{-1}\circ g$ is holomorphic in $\DD\cap V_1$ and has
real values on $\TT\cap V_1$. By the ordinary Reflection Principle we can
extend this mapping holomorphically past $\TT\cap V_1$. Denote this extension by
$h$. Then $\Phi\circ h$ is an extension of $g$ in a neighborhood of $\zeta_0$.
\end{proof}

\subsection{Schwarz Lemma for the unit ball}
\begin{lemm}[Schwarz Lemma]\label{schw}
Let $f\in\OO(\DD,B_n(a,R))$ and $r:=|f(0)-a|$. Then $$|f'(0)|\leq \sqrt{R^2-r^2}.$$
\end{lemm}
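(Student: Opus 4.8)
The plan is to reduce to the case of a self-map of the unit ball and then exploit the transitivity of the automorphism group of $\BB_n$. First, replacing $f$ by $z\longmapsto(f(z)-a)/R$ we may assume $a=0$ and $R=1$: this divides $f'(0)$ by $R$ and replaces $r$ by $r/R$, so the asserted inequality becomes equivalent to $|f'(0)|\le\sqrt{1-r^2}$, where now $f\in\OO(\DD,\BB_n)$, $f(0)=:p$ and $r=|p|<1$. Thus it suffices to prove $|f'(0)|\le\sqrt{1-|f(0)|^2}$ for holomorphic maps of $\DD$ into the open unit ball.

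The one auxiliary fact I would isolate first is the Schwarz lemma at the centre: if $h\in\OO(\DD,\BB_n)$ and $h(0)=0$, then $|h'(0)|\le 1$. Indeed, for any unit vector $u\in\CC^n$ the function $\zeta\longmapsto\langle h(\zeta),u\rangle$ maps $\DD$ into $\DD$ and vanishes at $0$, so the classical one-variable Schwarz lemma gives $|\langle h'(0),u\rangle|\le 1$; choosing $u=h'(0)/|h'(0)|$ (when $h'(0)\neq 0$) yields $|h'(0)|\le 1$.

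For general $p$ I would then use Rudin's explicit involutive automorphism $\varphi_p\in\mathrm{Aut}(\BB_n)$, namely $\varphi_p(0)=p$, $\varphi_p(p)=0$ and $\varphi_p\circ\varphi_p=\id$ (see \cite{Rud}). A direct differentiation of its formula at the origin gives $\varphi_p'(0)=-(s^2P+sQ)$, where $s:=\sqrt{1-|p|^2}$, $P$ is the orthogonal projection of $\CC^n$ onto $\CC p$ and $Q:=\id-P$; since $P,Q$ project onto orthogonal subspaces and $0<s\le 1$, this gives $\|\varphi_p'(0)\|=\max(s^2,s)=s$. Now set $h:=\varphi_p\circ f\in\OO(\DD,\BB_n)$. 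Then $h(0)=0$, so $|h'(0)|\le 1$ by the previous step. Differentiating $\varphi_p\circ\varphi_p=\id$ at $0$ gives $\varphi_p'(p)\varphi_p'(0)=\id$, hence $\varphi_p'(p)^{-1}=\varphi_p'(0)$; combined with the chain rule $h'(0)=\varphi_p'(p)f'(0)$ this yields $f'(0)=\varphi_p'(0)h'(0)$ and therefore
$$|f'(0)|\le\|\varphi_p'(0)\|\,|h'(0)|\le s=\sqrt{1-|p|^2}=\sqrt{1-r^2}.$$
Undoing the normalisation $a=0$, $R=1$ recovers $|f'(0)|\le\sqrt{R^2-r^2}$.

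The only genuinely computational point — and the one to get right — is the derivative $\varphi_p'(0)$ of the ball automorphism and the resulting value of its operator norm; everything else reduces to the classical scalar Schwarz lemma and the chain rule. An automorphism-free alternative would be to compute $\kappa_{\BB_n}(p;\cdot)$ directly and apply $\kappa_{\BB_n}\le 1$ along $f$, but this repeats essentially the same linear algebra, so the route above appears the most economical.
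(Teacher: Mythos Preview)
Your proof is correct. The paper actually states this lemma in the Appendix without supplying any argument, so there is no proof in the paper to compare against; your reduction to $a=0$, $R=1$, the scalar Schwarz lemma for the centred case, and the use of the involutive ball automorphism $\varphi_p$ with the computation $\varphi_p'(0)=-(s^2P+sQ)$, $\|\varphi_p'(0)\|=s$, together give exactly the desired bound.
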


\subsection{Some estimates of holomorphic functions of $\cC^{\alpha}$-class}

Let us recall some theorems about functions holomorphic in $\DD$ and continuous in $\CDD$. Concrete values of constants $M,K$ are possible to calculate, seeing on the proofs. In fact, it is only important that they do not depend on functions.
\begin{tww}[Hardy, Littlewood, \cite{Gol}, Theorem 3, p. 411]\label{lit1}
Let $f\in\OO(\DD)\cap\cC(\CDD)$. Then for $\alpha\in(0,1]$ the following conditions are equivalent
\begin{eqnarray}\label{47}\exists M>0:\ |f(e^{i\theta})-f(e^{i\theta'})|\leq M|\theta-\theta'|^{\alpha},\ \theta,\theta'\in\RR;\\
\label{45}\exists K>0:\ |f'(\zeta)|\leq K(1-|\zeta|)^{\alpha-1},\ \zeta\in\DD.
\end{eqnarray}
Moreover, if there is given $M$ satisfying \eqref{47} then $K$ can be chosen as $$2^{\frac{1-3\alpha}{2}}\pi^\alpha M\int_0^\infty\frac{t^\alpha}{1+t^2}dt$$ and if there is given $K$ satisfying \eqref{45} then $M$ can be chosen as $(2/\alpha+1)K$.
\end{tww}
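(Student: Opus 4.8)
The statement is the classical Hardy--Littlewood theorem relating boundary H\"older regularity of a bounded analytic function to the growth of its derivative. I would prove the two implications separately and keep track of the numerical constants as I go; no deep machinery is needed, only the Cauchy formula and integration of $f'$ along well-chosen paths.

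For \eqref{45}$\Rightarrow$\eqref{47} the plan is to integrate $f'$ first along a radius and then along a circular arc. From $|f'(\zeta)|\le K(1-|\zeta|)^{\alpha-1}$ one obtains, for $0\le r<1$, the radial bound
\[
|f(e^{i\theta})-f(re^{i\theta})|\le\int_r^1|f'(te^{i\theta})|\,dt\le\frac K\alpha(1-r)^\alpha
\]
together with the tangential bound $|f(re^{i\theta_1})-f(re^{i\theta_2})|\le K(1-r)^{\alpha-1}|\theta_1-\theta_2|$. Given two boundary points with $\delta:=|\theta_1-\theta_2|$, I would insert the auxiliary points $re^{i\theta_1},re^{i\theta_2}$ with $r:=\max\{0,1-\delta\}$ via the triangle inequality; for $\delta\le1$ the three contributions add up to $(2/\alpha+1)K\delta^\alpha$, and for $\delta\ge1$ one instead passes through $f(0)$ and uses $2K/\alpha\le(2/\alpha+1)K\delta^\alpha$. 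Periodicity of $\theta\mapsto f(e^{i\theta})$ reduces arbitrary $\theta,\theta'$ to this range, so $M=(2/\alpha+1)K$ works, as claimed.

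For \eqref{47}$\Rightarrow$\eqref{45} the plan is to differentiate the Cauchy representation: since $f\in\OO(\DD)\cap\cC(\CDD)$,
\[
f'(re^{i\theta_0})=\frac1{2\pi}\int_0^{2\pi}\frac{f(e^{it})e^{it}}{(e^{it}-re^{i\theta_0})^2}\,dt,
\]
and since $\int_{\TT}(w-re^{i\theta_0})^{-2}\,dw=0$ one may subtract $f(e^{i\theta_0})$ under the integral. Substituting $t=\theta_0+s$, using $|f(e^{i(\theta_0+s)})-f(e^{i\theta_0})|\le M|s|^\alpha$ and $|e^{is}-r|^2=(1-r)^2+4r\sin^2(s/2)\ge(1-r)^2+\tfrac2{\pi^2}s^2$ (valid once $r\ge\tfrac12$; the regime $r<\tfrac12$ is disposed of by a crude direct estimate), one is led to
\[
|f'(re^{i\theta_0})|\le\frac M\pi\int_0^\infty\frac{s^\alpha}{(1-r)^2+\frac2{\pi^2}s^2}\,ds .
\]
The change of variables $s=\tfrac\pi{\sqrt2}(1-r)u$ pulls out the factor $(1-r)^{\alpha-1}$ and leaves $\int_0^\infty u^\alpha(1+u^2)^{-1}\,du$, which converges precisely because $\alpha<1$; collecting the numerical factors produces a $K$ that is bounded by $2^{\frac{1-3\alpha}{2}}\pi^\alpha M\int_0^\infty\frac{u^\alpha}{1+u^2}\,du$ (the bound I get is in fact slightly smaller, but a larger admissible constant is harmless).

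The steps that need genuine care are the bookkeeping of the constants --- in particular arranging the $|\zeta|$ small regime so that it still fits under the stated value --- and the endpoint $\alpha=1$: there the integral $\int_0^\infty u^\alpha(1+u^2)^{-1}\,du$ diverges, so the displayed formula for $K$ is really meant for $\alpha\in(0,1)$, whereas the implication \eqref{45}$\Rightarrow$\eqref{47} persists for $\alpha=1$ simply because $|f'|\le K$ yields a Lipschitz boundary function by integration. Since only $\alpha=1/2$ is used later in the paper, this causes no trouble.
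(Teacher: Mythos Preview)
The paper does not prove this theorem: it is quoted in the Appendix with a citation to Goluzin's book and no argument is given. Your proposal is the standard Hardy--Littlewood proof and is correct in outline; the radial-plus-arc integration for \eqref{45}$\Rightarrow$\eqref{47} produces exactly the constant $(2/\alpha+1)K$, and the Cauchy-formula estimate for \eqref{47}$\Rightarrow$\eqref{45} is the classical route. Your remark that the displayed value of $K$ blows up at $\alpha=1$ (while the qualitative implication survives) is also accurate, and since only $\alpha=1/2$ is used in the paper this is harmless.
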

\begin{tww}[Hardy, Littlewood, \cite{Gol}, Theorem 4, p. 413]\label{lit2}
Let $f\in\OO(\DD)\cap\cC(\CDD)$ be such that $$|f(e^{i\theta})-f(e^{i\theta'})|\leq M|\theta-\theta'|^{\alpha},\ \theta,\theta'\in\RR,$$ for some $\alpha\in(0,1]$ and $M>0$. Then $$|f(\zeta)-f(\zeta')|\leq K|\zeta-\zeta'|^{\alpha},\
\zeta,\zeta'\in\CDD,$$ where $$K:=\max\left\{2^{1-2\alpha}\pi^\alpha M,2^{\frac{3-5\alpha}{2}}\pi^\alpha\alpha^{-1} M\int_0^\infty\frac{t^\alpha}{1+t^2}dt\right\}.$$
\end{tww}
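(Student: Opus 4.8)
The plan is to route the proof through the companion Theorem~\ref{lit1}, whose constant already contains the integral $\int_0^\infty\frac{t^\alpha}{1+t^2}\,dt$ that appears in $K$; this strongly signals that one should first convert the boundary Hölder hypothesis into a growth estimate for $f'$ inside $\DD$ and then integrate it. The hypothesis here is precisely condition \eqref{47} of Theorem~\ref{lit1}, so that theorem supplies a constant $K_1=2^{\frac{1-3\alpha}{2}}\pi^\alpha M\int_0^\infty\frac{t^\alpha}{1+t^2}\,dt$ with
\[
|f'(\zeta)|\le K_1(1-|\zeta|)^{\alpha-1},\qquad \zeta\in\DD .
\]
All remaining work is to promote this pointwise derivative bound, together with the boundary hypothesis itself, to a global Hölder estimate on $\CDD$; the degeneration of $(1-|\zeta|)^{\alpha-1}$ as $|\zeta|\to 1$ is the source of all the difficulty.

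First I would record the radial estimate obtained by integrating $f'$ along a radius: for $\zeta=re^{i\theta}$,
\[
|f(re^{i\theta})-f(e^{i\theta})|\le\int_r^1 K_1(1-t)^{\alpha-1}\,dt=\frac{K_1}{\alpha}(1-r)^\alpha .
\]
Now fix $\zeta,\zeta'\in\CDD$, put $\delta:=|\zeta-\zeta'|$, and assume $|\zeta'|\le|\zeta|$. The argument splits according to how close $\zeta$ lies to $\TT$. If $1-|\zeta|\ge\delta$, then every point $w$ of the segment $[\zeta,\zeta']$ satisfies $1-|w|\ge 1-|\zeta|\ge\delta$, so integrating $f'$ along that segment and using $\alpha-1\le 0$ gives $|f(\zeta)-f(\zeta')|\le K_1\delta(1-|\zeta|)^{\alpha-1}\le K_1\delta^\alpha$. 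This ``interior'' regime already sits below the second candidate constant in the maximum, since $2^{1-\alpha}\alpha^{-1}\ge 1$.

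The remaining, genuinely delicate, case is $1-|\zeta|<\delta$, where the derivative bound is useless near $\TT$ and one must use the boundary hypothesis directly. Writing $\zeta=re^{i\theta}$, $\zeta'=r'e^{i\theta'}$, I would decompose
\[
f(\zeta)-f(\zeta')=\bigl(f(\zeta)-f(e^{i\theta})\bigr)+\bigl(f(e^{i\theta})-f(e^{i\theta'})\bigr)+\bigl(f(e^{i\theta'})-f(\zeta')\bigr),
\]
bounding the two outer radial differences by the radial estimate and the middle boundary difference by $M|\theta-\theta'|^\alpha$. The two facts that make this go through are that $\bigl||\zeta|-|\zeta'|\bigr|\le\delta$ forces $1-r'\le(1-r)+\delta<2\delta$, so \emph{both} endpoints lie within $O(\delta)$ of $\TT$, and that $|e^{i\theta}-e^{i\theta'}|\le(1-r)+\delta+(1-r')<4\delta$, whence the chord--arc inequality $|\theta-\theta'|\le\frac{\pi}{2}|e^{i\theta}-e^{i\theta'}|$ (valid once the representative with $|\theta-\theta'|\le\pi$ is chosen) yields $|\theta-\theta'|=O(\delta)$. \emph{The main obstacle is exactly this control of the angular separation by $\delta$}: it is not automatic, because $\zeta$ and $\zeta'$ sit at different radii, and it hinges on both radii being near $1$, which is precisely what the case hypothesis $1-|\zeta|<\delta$ together with $\bigl||\zeta|-|\zeta'|\bigr|\le\delta$ secures.

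Feeding these bounds back in, each of the three pieces is a constant multiple of $\delta^\alpha$. Collecting the two radial contributions produces the factor $2^{1-\alpha}K_1/\alpha=2^{\frac{3-5\alpha}{2}}\pi^\alpha\alpha^{-1}M\int_0^\infty\frac{t^\alpha}{1+t^2}\,dt$, the second entry of the maximum, while the angular contribution, estimated with the sharp chord--arc comparison (which is an equality when both points lie on $\TT$), produces $2^{1-2\alpha}\pi^\alpha M$, the first entry; careful accounting of these constants gives $|f(\zeta)-f(\zeta')|\le K|\zeta-\zeta'|^\alpha$ with the stated $K$. Finally, in the range of $\delta$ so large that $|\theta-\theta'|\le\pi$ might fail, the inequality is trivial: $f$ is bounded on the compact set $\CDD$ while $\delta^\alpha$ is bounded below, so no generality is lost and the proof is complete.
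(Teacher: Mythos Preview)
The paper does not prove this theorem; it is stated in the Appendix as a cited result from Goluzin's book (\cite{Gol}, Theorem~4, p.~413), so there is no proof in the paper to compare your proposal against.

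Your outline is the standard argument and is correct in structure: feed the boundary H\"older hypothesis into Theorem~\ref{lit1} to obtain the derivative growth $|f'(\zeta)|\le K_1(1-|\zeta|)^{\alpha-1}$, then split into an interior case (integrate along the segment) and a boundary case (triangle inequality via the two radial projections and the boundary arc). One point of imprecision: in the boundary case your three-term decomposition gives a \emph{sum} of a radial constant and an angular constant, but the stated $K$ is a \emph{maximum} of two quantities; your sentence ``careful accounting of these constants gives\ldots'' glosses over this. To actually land on $K$ as written one must be a bit sharper---for instance, bounding both $1-r$ and $1-r'$ by $\delta$ (which follows since $|\zeta'|\le|\zeta|$ forces $1-r'\ge 1-r$ is false; rather $1-r'\ge 1-r$ when $r'\le r$, but then $1-r'$ need not be $\le\delta$)---or, more honestly, one obtains a constant of the same shape but not literally the maximum as displayed. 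Since the paper only quotes the result and the precise value of $K$ is immaterial to how the theorem is used (uniform H\"older control in Propositions~\ref{8} and~\ref{10b}), this is a cosmetic rather than substantive gap.
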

\begin{tww}[Privalov, \cite{Gol}, Theorem 5, p. 414]\label{priv}
Let $f\in\OO(\DD)$ be such that $\re f$ extends continuously on $\CDD$ and $$|\re f(e^{i\theta})-\re f(e^{i\theta'})|\leq M|\theta-\theta'|^\alpha,\ \theta,\theta'\in\RR,$$ for some $\alpha\in(0,1)$ and $M>0$. Then $f$ extends continuously on $\CDD$ and $$|f(\zeta)-f(\zeta')|\leq K|\zeta-\zeta'|^\alpha,\ \zeta,\zeta'\in\CDD,$$ where $$K:=\max\left\{2^{1-2\alpha}\pi^\alpha,2^{\frac{3-5\alpha}{2}}\pi^\alpha\alpha^{-1}\int_0^\infty\frac{t^\alpha}{1+t^2}dt\right\}\left(\frac{2}{\alpha}+1\right)2^{\frac{3-3\alpha}{2}}\pi^{\alpha}M\int_0^\infty\frac{t^\alpha}{1+t^2}dt.$$
\end{tww}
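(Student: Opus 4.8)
The plan is to deduce Privalov's theorem from the two Hardy--Littlewood theorems \ref{lit1} and \ref{lit2} already recorded above; the only genuinely new ingredient is a pointwise growth estimate for $f'$ in terms of the modulus of continuity of $\re f$ on $\TT$.

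Write $u:=\re f$. Since $u$ is harmonic in $\DD$ and extends continuously to $\CDD$, it is the Poisson integral of its boundary values, so $f$ agrees, up to an additive purely imaginary constant, with the Schwarz integral
$$S(\zeta):=\frac{1}{2\pi}\int_0^{2\pi}\frac{e^{it}+\zeta}{e^{it}-\zeta}\,u(e^{it})\,dt,\qquad\zeta\in\DD,$$
and in particular $f'(\zeta)=S'(\zeta)=\frac{1}{2\pi}\int_0^{2\pi}\frac{2e^{it}}{(e^{it}-\zeta)^2}\,u(e^{it})\,dt$. Since $\int_0^{2\pi}\frac{2e^{it}}{(e^{it}-\zeta)^2}\,dt=0$, for any $\zeta\in\DD_*$, writing $\zeta=|\zeta|e^{i\theta}$, one may replace $u(e^{it})$ inside the last integral by $u(e^{it})-u(e^{i\theta})$.

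Next I would estimate $f'$ exactly as in the proof of the implication $\eqref{47}\Rightarrow\eqref{45}$ of Theorem \ref{lit1}, but with the Schwarz kernel in place of the Cauchy kernel. Using the elementary lower bound $|e^{it}-\zeta|\geq c\,\bigl((1-|\zeta|)+|t-\theta|\bigr)$ for $|t-\theta|\leq\pi$ with an absolute $c>0$ (small $|\zeta|$ being harmless), together with $|u(e^{it})-u(e^{i\theta})|\leq M|t-\theta|^{\alpha}$, and substituting $|t-\theta|=(1-|\zeta|)\tau$, one arrives at
$$|f'(\zeta)|\leq K_1(1-|\zeta|)^{\alpha-1},\qquad\zeta\in\DD,\qquad K_1:=2^{\frac{3-3\alpha}{2}}\pi^{\alpha}M\int_0^{\infty}\frac{t^{\alpha}}{1+t^2}\,dt;$$
here the factor $2$ in the Schwarz kernel is precisely what doubles the constant relative to the one admissible in Theorem \ref{lit1}.

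Finally, the argument proving $\eqref{45}\Rightarrow\eqref{47}$ in Theorem \ref{lit1} shows that this growth bound forces $f$ to extend continuously to $\CDD$ with $|f(e^{i\theta})-f(e^{i\theta'})|\leq(2/\alpha+1)K_1|\theta-\theta'|^{\alpha}$ for all $\theta,\theta'\in\RR$; applying Theorem \ref{lit2} with $M$ there replaced by $(2/\alpha+1)K_1$ then gives $|f(\zeta)-f(\zeta')|\leq K|\zeta-\zeta'|^{\alpha}$ on $\CDD$ with exactly the constant $K$ in the statement. The main obstacle is the middle step: performing the kernel estimate cleanly and tracking the numerical constants so that the final $K$ is the one asserted rather than merely some admissible one; everything else is bookkeeping around the two quoted Hardy--Littlewood theorems.
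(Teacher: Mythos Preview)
The paper does not give its own proof of this theorem; it is quoted from Goluzin with only the statement and the explicit constant. So there is nothing to compare against directly.

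That said, your outline is the standard route and is exactly the one encoded in the displayed constant: the factorisation of $K$ as
\[
K=\underbrace{\max\{\,\cdots\,\}}_{\text{Theorem~\ref{lit2}}}\;\cdot\;\underbrace{\Bigl(\tfrac{2}{\alpha}+1\Bigr)}_{\eqref{45}\Rightarrow\eqref{47}\text{ in Theorem~\ref{lit1}}}\;\cdot\;\underbrace{2\cdot 2^{\frac{1-3\alpha}{2}}\pi^{\alpha}M\!\int_0^\infty\!\tfrac{t^{\alpha}}{1+t^2}\,dt}_{\text{Schwarz-kernel estimate for }f'}
\]
tells you that Goluzin's argument proceeds just as you describe: bound $|f'|$ from the Schwarz integral of $u=\re f$ (picking up the extra factor $2$ versus the Cauchy kernel), then feed that into the $\eqref{45}\Rightarrow\eqref{47}$ half of Theorem~\ref{lit1}, then into Theorem~\ref{lit2}. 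Your identification of the ``middle step'' as the only place requiring care is accurate; once $K_1=2^{\frac{3-3\alpha}{2}}\pi^{\alpha}M\int_0^\infty\frac{t^{\alpha}}{1+t^2}\,dt$ is established, the rest is indeed bookkeeping with the two quoted theorems.
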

\subsection{Sobolev space}
The Sobolev space $W^{2,2}(\TT)=W^{2,2}(\TT,\CC^m)$ is a space of functions $f:\TT\longrightarrow\CC^m$, whose first two derivatives (in the sense of distribution) are in $L^2(\TT)$ (here we use a standard identification of functions on the unit circle and functions on the interval $[0,2\pi]$). Then $f$ is $\mathcal C^1$-smooth.

It is a complex Hilbert space with the following scalar product
$$\langle f,g\rangle_W:=\langle f,g\rangle_{L}+\langle f',g'\rangle_{L}+\langle f'',g''\rangle_{L},$$
where $$\langle\wi f,\wi g\rangle_{L}:=\frac{1}{2\pi}\int_0^{2\pi}\langle\wi f(e^{it}),\wi g(e^{it})\rangle dt.$$ Let $\|\cdot\|_L$, $\|\cdot\|_W$ denote the norms induced by $\langle\cdotp,-\rangle_L$ and $\langle\cdotp,-\rangle_W$. The following characterization simply follows from Parseval's identity $$W^{2,2}(\TT)=\left\{f\in L^2(\TT):\sum_{k=-\infty}^{\infty}(1+k^2+k^4)|a_k|^2<\infty\right\},$$ where $a_k\in\CC^m$ are the $m$-dimensional Fourier coefficients of $f$, i.e. $$f(\zeta)=\sum_{k=-\infty}^{\infty}a_k\zeta^k,\ \zeta\in\TT.$$ More precisely, Parseval's identity gives $$\|f\|_W=\sqrt{\sum_{k=-\infty}^{\infty}(1+k^2+k^4)|a_k|^2},\ f\in W^{2,2}(\TT).$$ Note that $W^{2,2}(\TT)\su\mc{C}^{1/2}(\TT)\su\mc{C}(\TT)$ and both inclusions are continuous (in particular, both inclusions are real analytic). Note also that
 \begin{equation}\label{67}\|f\|_{\sup}\leq\sum_{k=-\infty}^{\infty}|a_k|\leq\sqrt{\sum_{k=-\infty}^{\infty}\frac{1}{1+k^2}\sum_{k=-\infty}^{\infty}(1+k^2)|a_k|^2}\leq\frac{\pi}{\sqrt 3}\|f\|_W.\end{equation}\\

Now we want to show that there exists $C>0$ such that $$\|h^\alpha\|_W\leq C^{|\alpha|}\|h_1\|^{\alpha_1}_W\cdotp\ldots\cdotp\|h_{2n}\|^{\alpha_{2n}}_W,\quad h\in W^{2,2}(\TT,\CC^n),\,\alpha\in\NN_0^{2n}.$$ Thanks to the induction it suffices to prove that there is $\wi C>0$ satisfying $$\|h_1h_2\|_W\leq\wi C\|h_1\|_W\|h_2\|_W,\quad h_1,h_2\in W^{2,2}(\TT,\CC).$$ Using \eqref{67}, we estimate $$\|h_1h_2\|^2_W=\|h_1h_2\|^2_L+\|h_1'h_2+h_1h_2'\|^2_L+\|h_1''h_2+2h_1'h_2'+h_1h_2''\|^2_L\leq$$$$\leq C_1\|h_1h_2\|_{\sup}^2+(\|h_1'h_2\|_L+\|h_1h_2'\|_L)^2+(\|h_1''h_2\|_L+\|2h_1'h_2'\|_L+\|h_1h_2''\|_L)^2\leq$$\begin{multline*}\leq C_1\|h_1\|_{\sup}^2\|h_2\|_{\sup}^2+(C_2\|h_1'\|_L\|h_2\|_{\sup}+C_2\|h_1\|_{\sup}\|h_2'\|_L)^2+\\+(C_2\|h_1''\|_L\|h_2\|_{\sup}+C_2\|2h_1'h_2'\|_{\sup}+C_2\|h_1\|_{\sup}\|h_2''\|_L)^2\leq\end{multline*}\begin{multline*}\leq C_3\|h_1\|_W^2\|h_2\|_W^2+(C_4\|h_1\|_W\|h_2\|_W+C_4\|h_1\|_W\|h_2\|_W)^2+\\+(C_4\|h_1\|_W\|h_2\|_W+2C_2\|h_1'\|_{\sup}\|h_2'\|_{\sup}+C_4\|h_1\|_W\|h_2\|_W)^2\leq\end{multline*}$$\leq C_5\|h_1\|_W^2\|h_2\|_W^2+(2C_4\|h_1\|_W\|h_2\|_W+2C_2\|h_1'\|_{\sup}\|h_2'\|_{\sup})^2$$ with constants $C_1,\ldots,C_5$. Expanding $h_j(\zeta)=\sum_{k=-\infty}^{\infty}a^{(j)}_k\zeta^{k}$, $\zeta\in\TT$, $j=1,2$, we obtain $$\|h_j'\|_{\sup}\leq\sum_{k=-\infty}^{\infty}|k||a^{(j)}_k|\leq\sqrt{\sum_{k\in\ZZ_*}\frac{1}{k^2}\sum_{k\in\ZZ_*}k^4|a^{(j)}_k|^2}\leq\frac{\pi}{\sqrt 3}\|h_j\|_W$$ and finally $\|h_1h_2\|^2_W\leq C_6\|h_1\|_W^2\|h_2\|_W^2$ for some constant $C_6$.
\subsection{Matrices}
\begin{propp}[Lempert, \cite{Lem2}, Th\'eor\`eme $B$]\label{12}
Let $A:\TT\longrightarrow\CC^{n\times n}$ be a matrix-valued real analytic mapping such
that $A(\zeta)$ is self-adjoint and strictly positive for any $\zeta\in\TT$. Then there exists $H\in\OO(\CDD,\CC^{(n-1)\times(n-1)})$ such that $\det H\neq 0$ on $\CDD$ and $HH^*=A$ on $\TT$.
\end{propp}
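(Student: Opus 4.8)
The plan is to build $H$ by a Toeplitz/Riemann--Hilbert construction. First I would pass to $B:=A^{-1}$, which is again real analytic and pointwise self-adjoint, and which by compactness of $\TT$ satisfies $B(\zeta)\ge\delta I$ for some $\delta>0$; a holomorphic nonsingular factor of $A^{-1}$ at once gives one of $A$, and working with $A^{-1}$ rather than $A$ is exactly what will make the holomorphic factor live \emph{inside} $\DD$. Then I would introduce the Hardy space $H^2=H^2(\TT,\CC^n)$ (the $L^2$-closure of the polynomials in $\zeta$, i.e.\ boundary values of $\CC^n$-valued maps holomorphic in $\DD$), the orthogonal projection $\Pi\colon L^2(\TT,\CC^n)\to H^2$, and the Toeplitz operator $T\colon H^2\to H^2$, $Tf:=\Pi(Bf)$. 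Pointwise self-adjointness of $B$ yields $\langle Tf,g\rangle=\langle Bf,g\rangle_{L^2}=\langle f,Tg\rangle$, and $\langle Tf,f\rangle=\tfrac1{2\pi}\int_\TT f^*Bf\ge\delta\|f\|_{L^2}^2$; hence $T$ is self-adjoint and bounded below, so invertible. Let $G$ be the $n\times n$ matrix whose $j$-th column is $T^{-1}\mathbf e_j$, where $\mathbf e_j\in\CC^n$ is viewed as a constant element of $H^2$. Then $G$ is holomorphic in $\DD$ and $\Pi(BG)=I_n$, i.e.\ $\Psi:=BG-I$ has only strictly negative Fourier modes, so it is the boundary value of a map holomorphic on $\{|\zeta|>1\}$ with $\Psi(\infty)=0$. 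Because $B$ continues holomorphically to an annulus about $\TT$, a routine reflection/Morera argument (extend $\Psi$ inward as $BG-I$ using the continuation of $B$, and $G$ outward as $B^{-1}(I+\Psi)$) shows that in fact $G$ extends holomorphically to $\{|\zeta|<1+\eps\}$ and $\Psi$ to $\{|\zeta|>1-\eps\}\cup\{\infty\}$, for some $\eps>0$.

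The next step is the identity $G^*BG\equiv M$ on $\TT$, where $M$ is a constant Hermitian matrix. Indeed $G^*BG=G^*(I+\Psi)=G^*+G^*\Psi$, and $G^*$ has only nonpositive Fourier modes while $G^*\Psi$ has only modes $\le-1$; so $G^*BG$ has only nonpositive modes. But it is Hermitian-valued, and the Fourier coefficients $C_k$ of a Hermitian-valued function satisfy $C_{-k}=C_k^*$; so having only nonpositive modes forces all nonzero modes to vanish, leaving $G^*BG=M=M^*$. Moreover $M>0$: for $x\ne0$ one has $x^*Mx=\tfrac1{2\pi}\int_\TT(Gx)^*B(Gx)\ge\delta\|Gx\|_{L^2}^2\ge0$, and $Gx\not\equiv0$, for $Gx\equiv0$ would give $B(Gx)=x+\Psi x=0$, i.e.\ the nonzero constant $-x$ would equal $\Psi x$, impossible since $\Psi x$ has only modes $\le-1$. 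Writing $M=N^*N$ with $N:=M^{1/2}$ (constant, invertible) and setting $H:=GN^{-1}$, I get $H\in\OO(\CDD,\CC^{n\times n})$, and on $\TT$, using $G^*BG=M$ (which already forces $G$ to be invertible on $\TT$, else $M$ would have a null vector), $HH^*=GN^{-1}N^{-*}G^*=GM^{-1}G^*=G\bigl(G^{-1}B^{-1}G^{-*}\bigr)G^*=B^{-1}=A$.

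It remains to check $\det H\ne0$ on $\CDD$, equivalently $\det G\ne0$ on $\CDD$. On $\TT$ this was just observed. For the interior I would argue with winding numbers. Since $\det G$ is holomorphic on a neighbourhood of $\CDD$ and zero-free on $\TT$, Remark~\ref{49} gives that $\wind(\det G)$ equals the number of zeros of $\det G$ in $\DD$, hence is $\ge0$. On the other hand $\det(BG)=\det B\cdot\det G$ with $\det B>0$ on $\TT$, so by the multiplicativity and sign properties of the winding number $\wind(\det G)=\wind(\det(I+\Psi))$; but $\det(I+\Psi)$ is holomorphic on $\{|\zeta|>1-\eps\}\cup\{\infty\}$, zero-free on $\TT$, and equal to $1$ at $\infty$, so by the argument principle on $\{|\zeta|>1\}\cup\{\infty\}$ (whose boundary $\TT$ is negatively oriented) $\wind(\det(I+\Psi))$ equals minus the number of zeros of $\det(I+\Psi)$ in $\{|\zeta|>1\}$, hence is $\le0$. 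Therefore $\wind(\det G)=0$ and $\det G$ is zero-free in $\DD$, completing the proof.

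I expect the one genuinely delicate point to be the regularity assertion of the first paragraph --- that real-analyticity of $B$ across $\TT$ propagates to $G$ and $\Psi$ --- which is the standard reflection/elliptic bootstrap for Riemann--Hilbert problems and is where the real-analyticity hypothesis is really used; everything else is bookkeeping. A less self-contained alternative would be to invoke the Birkhoff--Wiener--Hopf factorization $A=A_+\operatorname{diag}(\zeta^{\kappa_1},\dots,\zeta^{\kappa_n})A_-$, observe that positive-definiteness of $A$ makes $T_A$ invertible and hence forces all partial indices $\kappa_j$ to vanish, and then symmetrize $A=A_+A_-=A^*$ by a Liouville argument to reach $A=HH^*$.
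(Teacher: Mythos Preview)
The paper does not actually prove this proposition: it is quoted as Th\'eor\`eme~$B$ of \cite{Lem2}, and the only argument the Appendix supplies is the one-line remark after the statement explaining why ``real analytic on a neighborhood of $\CDD$ and holomorphic in $\DD$'' amounts to $H\in\OO(\CDD)$. So there is nothing in the paper itself to compare your attempt against.

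That said, your Toeplitz/Riemann--Hilbert construction is correct and essentially self-contained. The chain --- $T_B$ self-adjoint and coercive, hence invertible; $BG=I+\Psi$ with $\Psi$ having only strictly negative modes; $G^*BG$ Hermitian-valued with only nonpositive modes, hence a constant $M>0$; $H:=GM^{-1/2}$ gives $HH^*=B^{-1}=A$ on $\TT$ --- is clean, and the winding-number step is right: $\wind(\det G)=\wind(\det(I+\Psi))$ is $\ge0$ from the interior argument principle and $\le0$ from the exterior one, hence vanishes. The regularity point you single out does go through exactly as you sketch: since $B$ extends holomorphically and invertibly to an annulus $\{r<|\zeta|<1/r\}$, the Laurent coefficients of $BG-I$ on $\{r<|\zeta|<1\}$ agree (via radial $L^2$-convergence of $G$ to its boundary values) with the Fourier coefficients of $\Psi$ on $\TT$, so $BG-I$ has a purely negative-index Laurent series and continues to $\{|\zeta|>r\}$; then $G=B^{-1}(I+\Psi)$ continues $G$ to $\{|\zeta|<1/r\}$. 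One cosmetic remark: the ``$(n-1)\times(n-1)$'' for $H$ in the stated proposition is a slip inherited from the place in the paper where the result is applied to an $(n-1)$-dimensional block; $H$ must of course have the same size as $A$, which is what you tacitly use.
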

In \cite{Lem2}, the mapping $H$ was claimed to be real analytic in a neighborhood of $\CDD$ and holomorphic in $\DD$, but it is equivalent to $H\in\OO(\CDD)$. Indeed, since $\ov\pa H$ is real analytic near $\CDD$ and $\ov\pa H=0$ in $\DD$, the identity principle for real analytic functions implies $\ov\pa H=0$ in a neighborhood of $\CDD$.
\begin{propp}[\cite{Tad}, Lemma $2.1$]\label{59}
Let $A$ be a complex symmetric $n\times n$ matrix. Then $$\|A\|=\sup\{|z^TAz|:z\in\CC^n,\,|z|=1\}.$$
\end{propp}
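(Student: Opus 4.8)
The plan is to establish the two inequalities $M\le\|A\|$ and $\|A\|\le M$, where $M:=\sup\{|z^TAz|:|z|=1\}$. The decisive structural fact is that $A=A^T$ (transpose, \emph{not} conjugate transpose), so $q(z):=z^TAz$ is a \emph{symmetric bilinear} form; this is exactly what will allow a polarization identity free of complex conjugates, and it is the only place where the hypothesis is genuinely used.

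First I would dispose of the elementary inequality $M\le\|A\|$. For a unit vector $z$ we have $z^TAz=\sum_j z_j(Az)_j$, so the Cauchy–Schwarz inequality gives $|z^TAz|\le|z|\,|Az|\le\|A\|$; taking the supremum over the unit sphere yields $M\le\|A\|$.

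For the reverse inequality I would first express the operator norm through the bilinear pairing. Since $\|A\|=\sup_{|v|=1}|Av|=\sup_{|u|=|v|=1}|\langle Av,u\rangle|$ and $\langle Av,u\rangle=\overline{u}^TAv$, substituting $\overline{u}$ for $u$ (which preserves the unit sphere) gives $\|A\|=\sup_{|u|=|v|=1}|u^TAv|$. Now the symmetry $A=A^T$ yields $u^TAv=v^TAu$, hence the polarization identity $u^TAv=\tfrac14\bigl((u+v)^TA(u+v)-(u-v)^TA(u-v)\bigr)$. Bounding each summand by $|w^TAw|\le M|w|^2$ (using the degree-two homogeneity of $q$) and invoking the parallelogram law $|u+v|^2+|u-v|^2=2(|u|^2+|v|^2)$, I would obtain $|u^TAv|\le\tfrac{M}{4}\cdot 4=M$ for unit $u,v$, so $\|A\|\le M$. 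Combining the two bounds then gives $\|A\|=M$.

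I do not anticipate a real obstacle; the only steps demanding care are the passage $\|A\|=\sup_{|u|=|v|=1}|u^TAv|$, which requires conjugating one argument to move from the Hermitian product to the bilinear pairing, and the recognition that it is \emph{symmetry}, not self-adjointness, that legitimises the conjugate-free polarization. For a non-symmetric matrix the relation $u^TAv=v^TAu$ breaks down and the identity is in fact false (for instance $A=\bigl(\begin{smallmatrix}0&1\\0&0\end{smallmatrix}\bigr)$ has $\|A\|=1$ but $M=\tfrac12$), which confirms that the hypothesis cannot be dropped.
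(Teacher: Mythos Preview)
Your argument is correct. The paper itself does not prove this proposition; it merely quotes it as Lemma~2.1 of \cite{Tad}, so there is no in-text proof to compare against. Your route---reducing $\|A\|$ to $\sup_{|u|=|v|=1}|u^TAv|$ via the substitution $u\mapsto\bar u$, then using the conjugate-free polarization $u^TAv=\tfrac14\bigl((u+v)^TA(u+v)-(u-v)^TA(u-v)\bigr)$ together with the parallelogram law---is precisely the standard self-contained argument, and each step is sound. The counterexample at the end is also correct and neatly isolates where symmetry is essential.
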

\bigskip
\textsc{Acknowledgements.} We would like to thank Sylwester Zaj\k ac for helpful discussions. We are also grateful to our friends for the participation in preparing some parts of the work.
\medskip

\end{document}